\newcommand{\bb}{\mathbb}
\newcommand{\cal}{\mathcal}
\newcommand{\lb}{\left\{}
\newcommand{\rb}{\right\}}
\newcommand{\la}{\left<}
\newcommand{\ra}{\right>}
\newcommand{\gch}{\mathsf{GCH}}
\newcommand{\pfa}{\mathsf{PFA}}
\newcommand{\ps}{\mathbb{P}}
\newcommand{\Q}{\mathbb{Q}}
\newcommand{\R}{\mathbb{R}}
\newcommand{\al}{\alpha}
\newcommand{\be}{\beta}
\newcommand{\ga}{\gamma}
\newcommand{\de}{\delta}
\newcommand{\De}{\Delta}
\newcommand{\ka}{\kappa}
\newcommand{\lam}{\lambda}
\newcommand{\om}{\omega}
\newcommand{\bsl}{\setminus}
\newcommand{\seq}{\subseteq}
\newcommand{\we}{\wedge}
\newcommand{\dom}{\operatorname{dom}}
\newcommand{\es}{\emptyset}
\theoremstyle{definition}
\newtheorem{definition}{Definition}[section]
\newtheorem{question}[definition]{Question}
\newtheorem{notation}[definition]{Notation}
\theoremstyle{plain}
\newtheorem{theorem}[definition]{Theorem}
\newtheorem{proposition}[definition]{Proposition}
\newtheorem{lemma}[definition]{Lemma}
\newtheorem{remark}[definition]{Remark}
\newtheorem{assumption}[definition]{Assumption}
\begin{document}

\title[Forcing Axioms for Preserving a Topological Property]{Forcing Axioms for Proper Posets Preserving a Topological Property: Consistency Results}

\author{Thomas Gilton}\address{University of Pittsburgh
Department of Mathematics. The Dietrich School of 
Arts and Sciences, 301 Thackeray Hall,
Pittsburgh, PA, 15260 United States
} \email{tdg25@pitt.edu} \urladdr{https://sites.pitt.edu/~tdg25/}

\date{\today}

\begin{abstract}
   Forcing axioms are generalizations of Baire category principles that allow one to intersect more dense open sets and to do so in a wider variety of circumstances. In this paper we introduce two new forcing axioms related to posets which preserve topological properties of various spaces, specifically the properties of Lindel{\"o}f and countably tight. The focus in this paper is on using Neeman's side conditions iteration schema (\cite{Neemanttm}) to prove the consistency of these two forcing axioms. In later work, we will discuss applications of these forcing axioms in more detail.
\end{abstract}

\maketitle

\tableofcontents

\section{Introduction}

In a typical Baire category argument, one constructs an object of interest (say, an everywhere continuous, nowhere differentiable function) by refining approximations (dense open sets) and using features of the topology (say, being a compact metric space) to ensure that the dense sets ``hone in" on the desired object (i.e., that any countably-many have a non-empty intersection). This is in contrast to constructions where one writes down an explicit formula for the object under consideration (say, the formula for the sawtooth function).

Forcing axioms generalize Baire category arguments by allowing the intersection of more (at least $\aleph_1$-many) dense open sets and under a wider variety of circumstances, namely, for posets with various desirable features. Martin's Axiom was the first of these. $\mathsf{MA}(\ka)$ asserts that if $\ps$ is a poset with the countable chain condition (c.c.c.) and if $\la D_\al:\al<\ka\ra$ is a sequence of dense subsets of $\ps$, then there is a filter $G\subset\ps$ so that for all $\al<\ka$, $G\cap D_\al\neq\es$. As another example, the Proper Forcing Axiom, abbreviated $\pfa$, is a substantial strengthening of Martin's Axiom. It asserts that if $\ps$ is a proper poset (we will review this definition later) and if $\la D_\al:\al<\om_1\ra$ is a sequence of $\om_1$-many dense subsets of $\ps$, then there is a filter $G\subset\ps$ so that for all $\al<\om_1$, $D_\al\cap G\neq\es$. 

Models satisfying forcing axioms are normally obtained by reaching a closure point in a reasonably well-behaved iteration of forcings. Accordingly, constructing such a model involves at least the following two ingredients:
\begin{enumerate}
    \item the appropriate notion of a ``closure point" and
    \item an iteration theorem, showing that a desirable property of posets is preserved under a specific kind of iteration.
\end{enumerate}
For example, with Martin's Axiom, one only has to deal with c.c.c. posets of cardinality below the targeted value of the continuum. Thus any regular cardinal $\ka\geq\om_2$ can act as a closure point. Moreover, a finite-support iteration of c.c.c. posets is still c.c.c. -- the classic result of Solovay and Tennenbaum \cite{SolovayTennenbaum} --  giving us the desired iteration theorem. By contrast, in the case of $\pfa$, one needs a large cardinal (all known proofs to date involve a supercompact) in order to reach the right closure point. The relevant iteration theorem says that the property of being proper is preserved under countable support iterations (\cite{ShelahPIPF}).

A recent development is Neeman's proof (\cite{Neemanttm}) of the consistency of $\pfa$ using a forcing that has \emph{finite} supports. This gives additional flexibility in constructing models of forcing axioms. A condition in Neeman's forcing includes two components. The first component, known as the \emph{working part}, is a finite support function analogous to a condition in a traditional iteration. The second component is a finite, $\in$-increasing sequence of models called \emph{side conditions}. The two components are related by requiring the outputs of the working part to be (forced to be) generic conditions for plenty of the models in the side condition. This interaction between the two parts of a condition allows Neeman to argue that the entire poset is proper, and hence preserves $\omega_1$. For other examples of such ``side conditions iteration theorems" see \cite{AsperoMota} and \cite{GiltonNeeman}.

The idea of using side conditions in order to help argue for properness goes back to Todor{\u c}evi{\' c} (see  \cite{TodorcevicSideConditions} and later expanded to include preservation of $\om_1$ and $\om_2$ in \cite{TodorcevicDirected}). The idea is remarkably fecund: by introducing models themselves into the conditions in a forcing, one can place natural constraints on how the models interact with the other components of a condition and thereby more smoothly argue for properness. Some other highlights include the following: \cite{Koszmider}, \cite{Friedman}, \cite{MitchellOuch}, \cite{GiltonKruegerFun}, and \cite{KruegerMota}, among many others.\\

In this work, we introduce two forcing axioms for proper posets preserving some topological property $\Phi$ of a given space $X$, specifically,  Lindel{\"o}f spaces as well as countably tight spaces. The models will be constructed using  Neeman's finite-support method. We view the situation as analogous to the results in \cite{TodorcevicSouslinTree} and \cite{KruegerForcingAxiom}, where those authors focus on proper posets preserving some noteworty feature of a tree on $\om_1$.

More specifically, we define forcing axioms of the form $\pfa_\Phi(X)$, where $X$ is a topological space satisfying the topological property $\Phi$. Such an axiom asserts that if $\ps$ is a proper poset that preserves $\Phi$ of $X$ and if $\la D_\al:\al<\om_1\ra$ is a sequence of dense subsets of $\ps$, then we can find a filter meeting each $D_\al$. In the cases where $\Phi$ equals (a) countably tight or (b) Lindel{\"o}f, and where $X$ is a space of size less than a given supercompact cardinal, we provide proofs of the consistency of these axioms: see Theorems \ref{theorem:Lindelof} and \ref{theorem:CT}.  Note that in certain trivial cases (say, when $X$ is at most countably infinite), the axiom $\pfa_\Phi(X)$ is the same as $\pfa$. However, the question of the consequences of $\pfa_\Phi(X)$ for non-trivial $X$ is much more interesting. We will discuss the ``spectrum of consequences" question in further work.

The preservation of topological properties under various forcings has been an active area of research. A significant discovery is the fact that an $\om_1$-closed forcing needn't preserve the Lindel{\"o}f property in general (\cite{ScheepersCountableTightness}). However, Dow showed that if one first adds $\om_1$-many Cohen reals, and then forces with a poset which is $\om_1$-closed \emph{in the extension}, then such a two-step poset preserves the property of being countably tight (\cite{DowTwoApplications}) and the Lindel{\"o}f property (\cite{DowSubmodels}). Iwasa (\cite{Iwasa}) later studied the preservation of covering properties under Cohen forcing, and Kada (\cite{Kada})  studied the preservation of covering properties under a more general type of forcing (so-called ``weakly endowed").

The present author, together with Holshouser (\cite{GiltonHolshouser}), generalized Dow's results by showing that strongly proper forcings\footnote{We worked with posets that are strongly proper for ``stationarily-many" countable elementary submodels; this is a strictly weaker notion than being strongly proper for a club of countable elementary submodels. The latter is typically taken to be the definition of a ``strongly proper" poset.} preserve a number of covering properties, including strategic versions, as well as countable tightness. Marun (\cite{Marun}) showed in his PhD thesis -- both earlier and independently of \cite{GiltonHolshouser} -- that posets which are strongly proper for enough countable elementary submodels preserve the Lindel{\"o}f property. Some other noteworthy examples in this line of research include the following: \cite{DowRemote}, \cite{DTWNewProofs}, \cite{GJTForcingAndNormality}, and \cite{Kada2}.

The present work is organized in the following way: Section \ref{Section:background} briskly reviews the background on proper and strongly proper forcings, as well as the relevant topological definitions. We also define the forcing axiom $\pfa_\Phi(X)$. Section \ref{Section:NeemanSequence} reviews the basics of Neeman's sequence poset, highlighting various lemmas that we will refer to throughout the course of the present work. This section is included in order to make the current paper self-contained; however the reader is encouraged to read Neeman's paper for their full (and necessary) dosage. In Section \ref{Section:Poset} we define the type of poset that will provide models of the forcing axioms discussed in this paper. In Section \ref{Section:EmbeddingLemmas}, we prove embedding lemmas which will be used in our later inductive arguments, and in Section \ref{Section:SCAPreservation} we prove preservation lemmas that will be used in conjunction with the results of Section \ref{Section:EmbeddingLemmas}. In Sections \ref{Section:PFAL} and \ref{Section:PFACT} we discuss the other cases in the inductive proofs of Theorems \ref{theorem:Lindelof} and \ref{theorem:CT}, and then we stitch everything together to wrap up the proofs. Section \ref{Section:questions} contains open questions and questions for further research.

\section{Background Definitions}\label{Section:background}

In this section, we review the basic definitions from forcing and from topology. For the former, we will review the definitions associated with proper and strongly proper posets, as well as some of their foundational features. For the latter, we will review the definitions of the relevant topological properties as well as review the definition of what it means for a forcing to preserve a topological property.

We begin with the notion of \emph{properness} for a poset which has its roots in Shelah's work (\cite{ShelahIndependence},\cite{ShelahPIPF}).

\begin{definition}\label{def:proper}
    Let $\kappa$ be a regular uncountable cardinal, $\ps\in H(\kappa)$ a poset, and $M\prec H(\kappa)$ with $\ps\in M$.

    A condition $q\in \ps$ is an $(M,\ps)$-\emph{generic} condition if it forces that
    $$
    M\cap\dot{G}\cap D\neq\es
    $$
    for each $D\in M$ which is dense in $\ps$.

    $\ps$ is \emph{proper for $M$} if for every $p\in M\cap\ps$, there is an extension $q\leq p$ so that $q$ is $(M,\ps)$-generic.

    Finally, $\ps$ is simply said to be \emph{proper} if for some (equivalently, all) large enough uncountable regular $\lambda$, there is a club $\mathcal{C}$ of countable elementary submodels of $H(\lambda)$ so that $\ps$ is proper for each $M\in\mathcal{C}$.
\end{definition}

The following lemma follows from applications of elementarity and the definition of properness. We will use it frequently throughout the paper:

\begin{lemma}\label{lemma:genericforincreasingsequence} Let $\kappa$ be a regular uncountable cardinal. Suppose that $M_0\in\dots\in M_{n-1}$ is an $\in$-increasing sequence of countable elementary submodels of $H(\kappa)$ and that $\ps\in H(\kappa)$ is a poset which is proper for each $M_i$. Next, let $p\in\ps$, and suppose that for some $k<n-1$, $p$ is $(M_i,\ps)$-generic for all $i\leq k$ and that $p\in M_{k+1}$. Then there exists $p'\leq p$ so that $p'$ is an $(M_i,\ps)$-generic condition for all $i<n$.
\end{lemma}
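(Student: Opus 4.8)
The plan is to prove this by downward induction on the index of the models for which $p$ fails to be generic, or, perhaps more cleanly, by a straightforward induction from $k+1$ up to $n-1$. The key observation is that the hypothesis gives us a ``seam'': $p$ is generic for $M_0,\dots,M_k$ and lies inside $M_{k+1}$. Starting from this configuration, I would first find, using properness of $\ps$ for $M_{k+1}$, some $q_{k+1}\leq p$ that is $(M_{k+1},\ps)$-generic. The issue is that pushing below $p$ to get genericity for $M_{k+1}$ may have destroyed the fact that the condition lies in $M_{k+2}$, so I cannot simply iterate this naively. The fix is elementarity: the statement ``there is an extension of $p$ which is $(M_{k+1},\ps)$-generic'' is a statement with parameters $p,M_{k+1},\ps$, all of which belong to $M_{k+2}$ (here I use that $M_{k+1}\in M_{k+2}$ and $p\in M_{k+1}\in M_{k+2}$), so by elementarity of $M_{k+2}$ there is such a $q_{k+1}$ lying in $M_{k+2}$.

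The real subtlety is that I need a single $q_{k+1}\leq p$ which is generic for $M_{k+1}$ \emph{and} still generic for $M_0,\dots,M_k$, and which moreover lies in $M_{k+2}$. To get genericity for the lower models preserved, I would instead apply Lemma-style reasoning one model at a time: having $q_j \le p$ which is $(M_i,\ps)$-generic for all $i \le j$ and with $q_j \in M_{j+1}$, I first need to upgrade to genericity for $M_{j+1}$. But genericity for a model is \emph{not} in general downward absolute — extending $q_j$ could ruin genericity for $M_i$, $i\le j$. The standard device to avoid this is: genericity for a countable elementary submodel $M_i$ is preserved under extension by conditions in a \emph{single} larger model when one works with the right kind of generic condition, but in full generality for proper posets one must be more careful. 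So the cleaner route, and the one I would actually carry out, is to find the whole sequence of generic conditions at once: work inside $M_{k+1}$ (or rather use elementarity to reflect the right existential statement there), noting that $M_0 \in \dots \in M_k \in M_{k+1}$ and $p \in M_{k+1}$, so that the assertion ``there exists $p' \le p$ which is $(M_i,\ps)$-generic for all $i \le k$'' holds in $H(\kappa)$ — this is essentially the content of properness applied along the chain $M_0,\dots,M_k$ below $p$ — hence by elementarity holds in $M_{k+1}$, giving such a $p'' \in M_{k+1}$. Then apply properness of $\ps$ for $M_{k+1}$ to $p''$ to get $p' \le p'' \le p$ which is additionally $(M_{k+1},\ps)$-generic, while genericity for $M_i$, $i\le k$, is retained because $p'' $ already had it and...

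—here is where the main obstacle genuinely bites: I must ensure genericity for $M_i$ ($i \le k$) survives the final extension to $p'$. The way this works in Neeman's framework, and the way I would present it, is to iterate the reflection: for each $j$ from $k$ down appropriately, or rather to prove by induction on $m$ with $k < m \le n-1$ the statement ``there is $p_m \le p$ which is $(M_i,\ps)$-generic for all $i < m$ and $p_m \in M_m$'' — wait, but $p_m \in M_m$ and generic for $M_i$, $i<m$, including $i$ close to $m$, needs genericity to hold for models that $p_m$ is a member of, which is fine. The inductive step from $m$ to $m+1$: given $p_m \in M_m$ generic for $M_i$ ($i<m$)... but I want $p_{m+1} \in M_{m+1}$ generic for $i \le m$, i.e.\ also for $M_m$. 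Since $p_m, M_m, \ps, \langle M_i : i < m\rangle \in M_{m+1}$, by elementarity of $M_{m+1}$ and properness, there is inside $M_{m+1}$ a condition $p_{m+1} \le p_m$ that is $(M_m,\ps)$-generic; and genericity for $M_i$, $i < m$, is preserved because — and this is the one fact I would cite or prove carefully — a condition which is $(M_i,\ps)$-generic and whose relevant extension is taken \emph{inside} $M_m \ni M_i$ remains $(M_i,\ps)$-generic, or, failing a slick argument, I reflect the conjunction ``$(M_i,\ps)$-generic for all $i \le m$'' via elementarity all at once rather than preserving it. Concretely: the statement $\exists p' \le p_m\, [\,p'$ is $(M_i,\ps)$-generic for all $i \le m\,]$ is true in $H(\kappa)$ by applying properness successively to the chain $M_0 \in \dots \in M_m$ below $p_m$ (each step legitimate because $p_m$ and all the $M_i$ are actual elements of $H(\kappa)$ and $\ps$ is proper for each), and all parameters lie in $M_{m+1}$, so it reflects into $M_{m+1}$, yielding $p_{m+1} \in M_{m+1}$ with the desired genericity; this closes the induction, and at $m = n-1$ one more application of properness for $M_{n-1}$ (no reflection needed, since $p_{n-1}$ need not be in any further model) produces the final $p' \le p$ generic for all $i < n$. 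The main obstacle throughout is precisely the non-absoluteness of the generic-condition property under extension, and the resolution is to never try to preserve it under extension but rather to always obtain it afresh inside the next model via elementarity applied to the existential statement whose truth is guaranteed by iterating Definition \ref{def:proper} along the finite $\in$-chain.
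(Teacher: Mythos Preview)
Your overall architecture---induct along the chain, using elementarity of $M_{j+1}$ to find each successive generic extension inside the next model---is exactly the intended argument, and indeed the paper does not give a detailed proof beyond saying the lemma ``follows from applications of elementarity and the definition of properness.'' However, your write-up is built around a misconception that makes the argument far more convoluted than it needs to be and leaves a genuine gap in the inductive step.

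The misconception is your repeated claim that ``genericity for a countable elementary submodel $M_i$ is not in general downward absolute---extending $q_j$ could ruin genericity for $M_i$.'' This is false. Being $(M,\ps)$-generic is, by Definition~\ref{def:proper}, a forcing statement: $q$ is $(M,\ps)$-generic iff $q\Vdash M\cap\dot G\cap D\neq\emptyset$ for every dense $D\in M$. Any extension $q'\leq q$ forces everything $q$ forces, so $(M,\ps)$-genericity \emph{is} preserved under extension. Once you see this, the ``main obstacle'' you identify evaporates, and the induction becomes one line: given $q_j\in M_{j+1}$ which is $(M_i,\ps)$-generic for all $i\leq j$, properness for $M_{j+1}$ gives $q'\leq q_j$ which is $(M_{j+1},\ps)$-generic, and $q'$ remains generic for $M_0,\dots,M_j$ automatically; if $j+1<n-1$, reflect the existential ``$\exists q'\leq q_j$ which is $(M_{j+1},\ps)$-generic'' (parameters $q_j,M_{j+1},\ps\in M_{j+2}$) into $M_{j+2}$ to obtain $q_{j+1}\in M_{j+2}$.

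The gap this misconception creates in your argument is at the step where you assert that ``$\exists p'\leq p_m$ which is $(M_i,\ps)$-generic for all $i\leq m$'' holds in $H(\kappa)$ ``by applying properness successively to the chain $M_0\in\dots\in M_m$ below $p_m$.'' If you mean to start from scratch applying properness for $M_0$ to $p_m$, this fails: $p_m$ need not lie in $M_0$, so properness for $M_0$ gives you nothing. If instead you mean to use that $p_m$ is already generic for $M_0,\dots,M_{m-1}$ and just apply properness for $M_m$, then you are silently using the very preservation-under-extension fact you have denied. Either way the justification as written does not stand on its own; correcting the misconception fixes it immediately.
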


The following are some useful equivalences to a condition being $(M,\ps)$-generic; proofs may be found in \cite{Jech}.

\begin{lemma}
    Let $\kappa$, $\ps$, and $M$ be as in Definition \ref{def:proper}. Let $q\in\ps$. The following are equivalent:
    \begin{enumerate}
        \item $q$ is $(M,\ps)$-generic;
        \item $q\Vdash M[\dot{G}]\cap V=M$;
        \item for all $q^*\leq q$ and all dense $D\seq\ps$ with $D\in M$, there exists a $p\in M\cap D$ so that $p$ is compatible with $q^*$.
    \end{enumerate}
\end{lemma}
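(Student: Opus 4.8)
My plan is to run the cycle $(1)\Rightarrow(2)\Rightarrow(3)\Rightarrow(1)$, in each case passing to a $V$-generic filter $G$ containing the relevant condition. I expect two of the three implications to be routine density arguments; the real content, and the main obstacle, is concentrated in $(1)\Rightarrow(2)$, where I will need the definability over $H(\kappa)$ of the relevant fragment of the forcing relation together with the elementarity of $M$.

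For $(1)\Rightarrow(2)$ I would fix a generic $G$ with $q\in G$. The inclusion $M\seq M[G]\cap V$ is immediate: for $x\in M$ the canonical name $\check{x}$ is definable from $x$ and $\ps$, hence lies in $M$ by elementarity, and $\check{x}^G=x\in V$. For the reverse inclusion, given $a\in M[G]\cap V$ I fix a $\ps$-name $\dot{a}\in M$ with $\dot{a}^G=a$, noting $a\in H(\kappa)$ by a rank computation from $\dot{a},\ps\in H(\kappa)$, and I consider
\[
D=\{\,p\in\ps:(\exists c\in H(\kappa))\ p\Vdash\dot{a}=\check{c}\,\}\cup\{\,p\in\ps:(\forall c\in H(\kappa))\ p\Vdash\dot{a}\neq\check{c}\,\}.
\]
A case split on whether a given condition has an extension forcing a value for $\dot{a}$ shows $D$ is dense, and $D$ is definable from $\dot{a}$, $\ps$, and the forcing relation, so $D\in M$. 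Genericity of $q$, with $q\in G$, yields $p\in M\cap D\cap G$; since $p\in G$ and $\dot{a}^G=a\in H(\kappa)$, $p$ cannot lie in the second set, so it lies in the first: some $c\in H(\kappa)$ has $p\Vdash\dot{a}=\check{c}$. By elementarity such a $c$ may be taken in $M$, and then $a=\dot{a}^G=c\in M$.

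For $(2)\Rightarrow(3)$ I would fix $q^*\leq q$ and a dense $D\in M$, pass to a generic $G\ni q^*$ (so $q\in G$ and $M[G]\cap V=M$), and inside $M$ choose a maximal antichain $A\seq D$ with $A\in M$. Since being a maximal antichain of $\ps$ is absolute between $H(\kappa)$ and $V$, $A$ is genuinely maximal, so $A\cap G$ is a singleton; letting $\dot{a}\in M$ name its unique element (e.g.\ $\dot{a}=\{(\check{y},p):p\in A,\ y\in p\}$), we get $a_0:=\dot{a}^G\in A\cap G\seq V$, hence $a_0\in M[G]\cap V=M$ by $(2)$, so $a_0\in M\cap D$, and $a_0,q^*\in G$ gives the compatibility demanded by $(3)$. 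For $(3)\Rightarrow(1)$ I would fix a dense $D\in M$ and a generic $G\ni q$ and set
\[
E=\{\,r\in\ps:(\exists p\in M\cap D)\ r\leq p\,\}\cup\{\,r\in\ps:(\forall p\in M\cap D)\ r\perp p\,\},
\]
which is dense in $\ps$; it need not lie in $M$, but that is harmless since I only use $G\cap E\neq\es$. Picking $r\in G\cap E$: if $r$ were in the second set, then choosing $q^*\leq r,q$ (possible as $r,q\in G$) and applying $(3)$ to $q^*$ and $D$ would produce some $p\in M\cap D$ compatible with $q^*$, hence with $r$ — contradicting $r\perp p$. So $r\leq p$ for some $p\in M\cap D$, whence $p\in G$ and $p\in M\cap D\cap G$.

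The hard part will be the definability and absoluteness bookkeeping inside $(1)\Rightarrow(2)$: one must check that the fragment of the forcing relation used to define $D$ is definable over $H(\kappa)$, and that $a$ together with its candidate values all lie in $H(\kappa)$, so that the elementarity of $M$ can be brought to bear. Everything else reduces to elementary density arguments. I note in passing that the equivalence $(1)\Leftrightarrow(3)$ by itself admits a direct proof with no appeal to definability of forcing — $(1)\Rightarrow(3)$ by passing to a generic containing $q^*$ and using absoluteness of compatibility, and $(3)\Rightarrow(1)$ as above — which is presumably why $(3)$ is the reformulation singled out for repeated use.
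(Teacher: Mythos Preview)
Your argument is correct. The cycle $(1)\Rightarrow(2)\Rightarrow(3)\Rightarrow(1)$ is carried out cleanly, and the points you flag as delicate are handled properly: in $(1)\Rightarrow(2)$ the set $D$ is indeed definable over $H(\kappa)$ from $\dot a$ and $\ps$ (the forcing relation for equalities of names in $H(\kappa)$ is definable there), and the rank bound guaranteeing $a\in H(\kappa)$ follows since the transitive closure of $\dot a^G$ is no larger than that of $\dot a$. The name $\dot a=\{(\check y,p):p\in A,\ y\in p\}$ in $(2)\Rightarrow(3)$ does what you want and lies in $M$ by elementarity, and the density argument in $(3)\Rightarrow(1)$ is fine since $E$ only needs to be met by the $V$-generic $G$.

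There is nothing to compare against: the paper does not prove this lemma but simply cites Jech's textbook. Your write-up is a standard and complete treatment of this well-known equivalence.
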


Note that for a condition $q$ to be $(M,\ps)$-generic, we only have to check a certain property with respect to dense subsets of $\ps$ which are members of $M$. A significant strengthening of this idea was first made explicit in \cite{MitchellOuch}, namely, the idea of a \emph{strongly} generic condition.

\begin{definition}
    Let $\kappa$, $\ps$, and $M$ be as in Definition \ref{def:proper}. A condition $q\in\ps$ is said to be an $(M,\ps)$-\emph{strongly generic} condition if $q$ forces that $\dot{G}\cap M$ is a $V$-generic filter for $\ps\cap M$.
\end{definition}

Note that for $q$ to be $(M,\ps)$-strongly generic, $q$ must force that $D'\cap\dot{G}\cap M\neq\es$ for \emph{all} dense $D'\seq M\cap\ps$ with $D'\in V$, not just ones which are the trace of some element of $M$.

In this paper, we shall be concerned with only the following two topological properties:

\begin{definition}\hfill
    \begin{itemize}
        \item A space $(X,\tau)$ is said to be
        \emph{Lindel{\"o}f} if every open cover of $X$ has a countable subcover.
        \item A space $(X,\tau)$ is said to be \emph{countably tight} if for each $A\seq X$ and $x\in X$ with $x\in\operatorname{cl}(A)$, there is a countable $B\seq A$ so that $x\in\operatorname{cl}(B)$.
    \end{itemize}
\end{definition}

We next review the definition of preserving a topological property by a forcing:

\begin{definition}
    Let $(X,\tau)$ be a topological space and $\ps$ a poset. Fix a $V$-generic filter $G$ over $\ps$. In $V[G]$, noting that $\tau$ forms a basis for a topology on $X$, let $\tau_G$ denote the topology on $X$ generated by $\tau$.

    Let $\Phi$ be a topological property which $X$ satisfies in $V$. We say that $\ps$ \emph{preserves that $X$ satisfies $\Phi$} if $\ps$ forces that $(X,\tau_{\dot{G}})$ satisfies $\Phi$.
\end{definition}

We give a few toy examples:
\begin{enumerate}
    \item Let $x\in X$ be a point of first countability in an arbitrary space $X$. Every forcing preserves that $x$ is a point of first countability, as a countable neighborhood base at $x$ in $V$ remains such in any generic extension (since the ground model topology is used as a base in the generic extension).
    \item Let $\ps$ be a forcing that adds a new real. Let $Z$ be the unit interval $[0,1]^V$ in the reals, \emph{as defined in $V$}. Naturally, $Z$ is a compact subset of $\R^V$ in $V$. However, in any extension by $\ps$, $Z$ is no longer a compact subset of $\R^V$. Indeed, let $r$ be a new real in $V[G]$, where we assume $0<r<1$. Then the following is an open cover of $[0,1]^V$ in the space $\R^V$ in the model $V[G]$:
    $$
    \lb [0,q)^V:q\in\Q \wedge 0<q<r\rb\cup\lb (s,1]^V:s\in\Q\wedge r<s<1\rb.
    $$
    However, this open cover has no finite subcover. We emphasize that we are working with rational intervals from $\R^V$, but we are using the new real $r$ to define the open cover.
\end{enumerate}

Now we define our forcing axiom schema:

\begin{definition}\label{def:axioms}
    Let $X$ be a topological space satisfying a topological property $\Phi$. The axiom $\pfa_\Phi(X)$ asserts that for any proper poset $\Q$ so that $\Q$ preserves that $X$ has $\Phi$ and for any collection $\lb D_\al:\al<\om_1\rb$ of dense subsets of $\Q$, there is a filter $H$ so that $H$ meets each $D_\al$.
\end{definition}

\section{Neeman's Model Sequence Poset}\label{Section:NeemanSequence}

In this section, we will review some of the main results from Section 2 of \cite{Neemanttm} in order to make the current paper reasonably self-contained. We will not be providing proofs of most of these results, and the interested reader is heartily encouraged to carefully read at least Section 2 of \cite{Neemanttm}.

We begin with the basic set-up of the model sequence poset.

\begin{notation}
    Let $\theta$ be a large enough regular cardinal. Let $\cal{S}$ denote a set of countable elementary submodels of $H(\theta)$ and $\cal{T}$ a set of transitive elementary submodels of $H(\theta)$.
\end{notation}

\begin{definition}\label{def:appropriate}
    We say that $\cal{S}$ and $\cal{T}$ are \emph{appropriate} for $H(\theta)$ if
    \begin{itemize}
        \item all elements of $\cal{S}\cup\cal{T}$ are members of $H(\theta)$, and
        \item if $W\in\cal{T}$, $M\in\cal{S}$, and $W\in M$, then $M\cap W$ is a member of $W$ and a member of $\cal{S}$.
    \end{itemize}
\end{definition}

We assume for the remainder of this section that $\cal{S}$ and $\cal{T}$ are appropriate for $H(\theta)$. We will review the definition of the two-type model sequence poset $\ps_{\cal{S},\cal{T},H(\theta)}$, which we abbreviate throughout this section by $\ps$. We first define which objects are conditions, and then after a few remarks, we define the ordering.

\begin{definition}\label{def:sequencePoset}
    Conditions in $\ps_{\cal{S},\cal{T},H(\theta)}$ are finite $\in$-increasing sequences of elements of $\cal{S}\cup\cal{T}$ which are closed under intersections.
\end{definition}

\begin{remark}\hfill
\begin{enumerate}
    \item To say that a sequence $\la M_k:k<m\ra$ is closed under intersections means that if $k,l<m$, then $M_k\cap M_l$ is also on the sequence.
    \item One of the key places where closure under intersections is used is the proof of properness for small nodes.
\end{enumerate}
\end{remark}

\begin{lemma}
    If $\la M_k:k<m\ra$ is a condition and $k<l<m$, then the von Neumann rank of $M_k$ is less than that of $M_l$.
\end{lemma}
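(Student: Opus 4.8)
The plan is to reduce the statement to the single, standard fact that the von Neumann rank function is strictly increasing along the membership relation. Recall that for any sets $x$ and $y$ the recursion $\mathrm{rank}(y)=\sup\{\mathrm{rank}(z)+1:z\in y\}$ immediately yields $x\in y\Rightarrow\mathrm{rank}(x)<\mathrm{rank}(y)$. So the first step is simply to invoke this fact for sets which happen to be (countable or transitive) elementary submodels of $H(\theta)$; nothing about their being elementary submodels, nor the appropriateness hypotheses on $\cal{S}$ and $\cal{T}$, nor closure under intersections, is needed.

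Next, I would use that a condition $\la M_k:k<m\ra$ is by definition an $\in$-increasing sequence, so in particular $M_k\in M_{k+1}$ for each $k<m-1$. Combining this with the fact above gives the chain of strict inequalities among ordinals $\mathrm{rank}(M_0)<\mathrm{rank}(M_1)<\cdots<\mathrm{rank}(M_{m-1})$, and since $<$ on the ordinals is transitive it follows that $\mathrm{rank}(M_k)<\mathrm{rank}(M_l)$ whenever $k<l<m$, which is exactly the assertion.

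The only place where a little care is warranted — and about the only thing resembling an obstacle here — is the temptation to shortcut by claiming $M_k\in M_l$ directly for all $k<l$ and reading the rank inequality off from that. That shortcut is genuinely unavailable in Neeman's setting: membership among countable elementary submodels of $H(\theta)$ need not be transitive, so $M_0\in M_1\in M_2$ does not in general give $M_0\in M_2$. Passing through the rank function, whose comparison relation \emph{is} transitive, is precisely what repairs this, and reduces the lemma to a one-line consequence of the definitions.
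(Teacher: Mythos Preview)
Your argument is correct. The paper does not actually supply a proof of this lemma --- it is merely quoted as part of the review of Neeman's results --- but the reasoning you give (rank is strictly increasing along $\in$, the sequence is $\in$-increasing, and $<$ on ordinals is transitive) is exactly the intended one, and your cautionary remark about not assuming $M_k\in M_l$ directly is apt.
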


Consequently, there is no loss of information in talking about the set $\lb M_k:k<m\rb$ whenever $\la M_k:k<m\ra$ is a condition in the model sequence poset. This helps to define the order:

\begin{definition}
    Conditions in $\ps$ are ordered by reverse inclusion: $\la N_l:l<n\ra\leq\la M_k:k<m\ra$ iff $\lb N_l:l<n\rb\supseteq\lb M_k:k<m\rb$.
\end{definition}

It will be helpful to have a slightly weaker notion than that of a condition, and the next item takes care of this:

\begin{definition}\label{def:precondition}
    A finite sequence $\la M_k:k<m\ra$ of elements of $\cal{S}\cup\cal{T}$ is a \emph{precondition} if it is $\in$-increasing.
\end{definition}

Thus conditions in $\ps$ are precisely  preconditions which are closed under intersections. However, it suffices to check a seemingly weaker condition than full closure under intersections to verify that a precondition is in fact a condition:

\begin{lemma}\label{lemma:weakclosure}
    Suppose that $s$ is a precondition and that $s$ satisfies the following weak closure property:
    \begin{enumerate}
        \item[$(*)$] if $W$ and $M$ are nodes in $s$ of transitive and small type respectively and if $W\in M$, then $M\cap W$ is in $s$. 
    \end{enumerate}
    Then $s$ is outright closed under intersections, and therefore $s$ is a condition.
\end{lemma}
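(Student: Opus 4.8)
The plan is a short case analysis on the types of the two nodes being intersected. Enumerate $s$ in $\in$-increasing order as $\langle M_k : k<m\rangle$, so that $M_i\in M_j$ whenever $i<j$. To conclude that $s$ is closed under intersections it suffices to verify, for each pair $i\le j<m$, that $M_i\cap M_j$ is a node of $s$; the case $i=j$ is immediate, so fix $i<j$, whence $M_i\in M_j$. Since each node of $s$ lies in exactly one of $\cal S$ (small type) or $\cal T$ (transitive type), the pair $(M_i,M_j)$ falls under one of three mutually exclusive, exhaustive cases, and I would treat them in turn.

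First, if $M_j$ has transitive type, then $M_i\in M_j$ together with transitivity of $M_j$ gives $M_i\seq M_j$, so $M_i\cap M_j=M_i$ is a node of $s$. Otherwise $M_j$ has small type, and I split on the type of $M_i$. If $M_i$ also has small type, then $M_i$ is a countable element of the elementary submodel $M_j\prec H(\theta)$, and I would invoke the standard fact that a countable element of such a submodel is also a subset of it: fixing in $H(\theta)$ a surjection $f\colon\om\to M_i$, by elementarity one exists inside $M_j$, and since $\om\seq M_j$ this yields $M_i=\ran(f)\seq M_j$; hence again $M_i\cap M_j=M_i$ is a node of $s$. The one remaining configuration is $M_i$ of transitive type and $M_j$ of small type with $M_i\in M_j$, which is precisely the hypothesis of $(*)$, so $M_i\cap M_j$ is a node of $s$ directly. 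As the three cases are exhaustive, $s$ is closed under intersections, and so, being a precondition, $s$ is a condition.

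I do not expect a serious obstacle: the content of the lemma is just the dichotomy that for any two $\in$-comparable nodes the smaller one is literally a subset of the larger — which holds in every case except that of a small model containing a transitive one — while that exceptional case is exactly the shape already legislated by $(*)$. The only mildly technical ingredient is the elementary-submodel fact used when both nodes are small, and it is appropriateness of $\cal S$ and $\cal T$ that guarantees the relevant witnesses (such as $M_i$ itself and the surjection onto it) lie in $H(\theta)$, so that this fact is applicable.
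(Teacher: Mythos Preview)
Your proof has a genuine gap: the assertion that $M_i\in M_j$ whenever $i<j$ is false for preconditions in general. Being $\in$-increasing only gives $M_k\in M_{k+1}$ for consecutive indices, and membership need not propagate along the chain when a transitive node sits between two small ones. Concretely, take $W\in\mathcal{T}$ and $M\in\mathcal{S}$ with $W\in M$, and let $s=\langle M\cap W,\,W,\,M\rangle$. This is a precondition satisfying $(*)$ (the only relevant pair is $(W,M)$, and $M\cap W\in s$), yet $M_0=M\cap W\notin M=M_2$: by appropriateness $M\cap W\in W$, so if also $M\cap W\in M$ we would get $M\cap W\in M\cap W$, contradicting foundation. (The paper makes exactly this observation in the paragraph following the definition of direct compatibility.)

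This breaks your ``both small'' case for the pair $(M_0,M_2)$ above: you cannot invoke the surjection argument, since $M_0\notin M_2$. In that particular example $M_0\cap M_2=M_0$ holds anyway, because $M_0=M\cap W\subseteq M$, but your reasoning does not establish it. The paper does not reproduce a proof of the lemma, deferring to Neeman's original; the correct argument needs an inductive structure. One route: first verify that if $M_j$ is transitive then every earlier node is a subset of it (iterate transitivity down the chain), and that if all nodes in the interval $(M_i,M_j]$ are small then indeed $M_i\in M_j$ (iterate the countable-element-implies-subset fact). For general $i<j$ with $M_j$ small, locate the last transitive node $M_k$ in $(M_i,M_j]$; then $M_i\subseteq M_k$ and $M_k\in M_j$, so $(*)$ gives $M_j\cap M_k=M_\ell\in s$ for some $\ell<k$, reducing $M_i\cap M_j=M_i\cap M_\ell$ to a pair with strictly smaller top index.
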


We now state some results about amalgamating conditions in the model sequence poset.

\begin{definition}
    Let $s$ be a condition and $Q\in s$. The \emph{residue} of $s$ to $Q$, denoted $\operatorname{res}_Q(s)$, is the set $\lb M\in s:M\in Q\rb$.
\end{definition}

Note that if $s$ is a condition, so is $\operatorname{res}_Q(s)$ (Lemma 2.18 of \cite{Neemanttm}). It will be helpful to have the following additional terminology when discussing amalgamation arguments.

\begin{definition}
    Two conditions $s$ and $t$ are \emph{directly compatible} if the closure of $s\cup t$ under intersections is a condition.
\end{definition}

    Two compatible conditions needn't in general be directly compatible. For example, suppose that $W\in\cal{T}$ and $M\in\cal{S}$ where $W\in M$. Then $M\cap W$ is also a model in $\cal{S}$ (Definition \ref{def:appropriate}). Note that the conditions $\lb M\cap W\rb$ and $\lb M\rb$ are compatible since $\lb M\cap W,W,M\rb$ is a condition extending both. However, the closure of $\lb M\cap W,M\rb$ under intersections is just $\lb M\cap W,M\rb$ again, and this is not a condition since $M\cap W\notin M$ (this in turn because $M\cap W\in W$).

    In fact, any condition containing $M\cap W$ and $M$ must add a transitive node between these two models, as otherwise, we would have a finite $\in$-chain of \emph{countable} models from $M\cap W$ to $M$, which in turn would imply that $M\cap W\in M$.

The next lemma states the amalgamation result for the case when $Q$ is transitive:

\begin{lemma}
    Suppose that $s$ is a condition and that $Q\in s$ is transitive. Let $t\in Q$ be a condition extending $\operatorname{res}_Q(s)$. Then $s\cup t$ is a condition.
\end{lemma}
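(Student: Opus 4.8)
The plan is to show that $s \cup t$ is already closed under intersections, so that Lemma \ref{lemma:weakclosure} applies directly. By Definition \ref{def:sequencePoset}, we know $s$ and $t$ are individually closed under intersections, so the only new intersections to worry about are of the form $N \cap M$ where $N \in t \setminus s$ and $M \in s \setminus t$. The key structural observation is that every element of $t$ is a member of $Q$ (since $t \in Q$ is a condition, and $Q$ is transitive, so $t \subseteq Q$ — indeed every model on a condition in $Q$ is an element of $Q$), and $t$ extends $\res_Q(s) = \{M \in s : M \in Q\}$.

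First I would fix $N \in t$ and $M \in s$ and analyze $N \cap M$. Since $N \in Q$ and $Q$ is transitive and elementary, $N \cap Q = N$, so we may ask where $M$ sits relative to $Q$. Since $s$ is a condition (in particular a precondition, hence $\in$-increasing) and $Q \in s$, for each $M \in s$ we have a trichotomy: $M \in Q$, $M = Q$, or $Q \in M$. If $M \in Q$, then $M \in \res_Q(s) \subseteq t$, so $N \cap M$ is an intersection of two elements of $t$ and hence already in $t \subseteq s \cup t$. If $M = Q$, then $N \cap M = N \cap Q = N \in t$ since $N$ is transitive or countable but in any case $N \subseteq Q$. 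If $Q \in M$, then since $N \in Q \in M$ we get $N \in M$, and by elementarity of $M$ together with the fact that $M$ is closed under the operation of intersecting with the models it contains (here one uses that $s$ is closed under intersections once one knows $N$ or an appropriate proxy lies in $s$)—this is the delicate case—one argues $N \cap M$ is a model already accounted for. The cleanest route is: since $t$ extends $\res_Q(s)$ and $N$ together with $\res_Q(s)$ generates, under intersections inside $Q$, exactly the models of $t$, and since $M$ sees all of $Q$, the set $\{P \cap M : P \in t\}$ coincides with $\{P \cap M : P \in \res_Q(s)\}$, which lies in $s$ because $s$ is closed under intersections.

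More carefully, here is the step I would carry out to close the gap: given $N \in t$ with $Q \in M$, I claim $N \cap M \in s$. Since $Q \in M$ and $Q$ is transitive, $Q \subseteq M$, so $t \in Q \subseteq M$ and hence $t \in M$; thus $M$ can compute from $t$ the model $\res_Q(s) = $ (the set of minimal-rank-below-$Q$ models on $t$, which is definable from $t$ and $Q$, both in $M$). Now $N \cap M$: if $N \in \res_Q(s)$ we're done as above; otherwise $N$ arose as an intersection involving models of $t \setminus \res_Q(s)$, but all such models are in $Q \subseteq M$ anyway, so $N \in M$ regardless, giving $N \cap M = N \in t$. Wait — this shows $N \in M$ in all subcases of $Q \in M$, since $N \in t \in M$ and $N \subseteq Q \subseteq M$ forces... actually $N \in Q$ and $Q \subseteq M$ only gives $N \subseteq M$, not $N \in M$; but $N \in t$ and $t \subseteq Q$ and $Q \in M$ with $Q$ transitive gives $N \in Q \subseteq M$, wait that's membership $N \in Q$ and then transitivity of $Q$ is irrelevant — we need $Q \in M \Rightarrow Q \subseteq M$, hence $N \in Q$ gives $N \in M$. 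Good: so $N \in M$, hence $N \cap M = N \in t \subseteq s \cup t$.

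So the argument collapses cleanly: for every $N \in t$ and $M \in s$, either $M \in Q$ (so $M \in t$ and $N\cap M \in t$), or $M = Q$ (so $N \cap M = N \in t$), or $Q \in M$ (so $N \in M$, whence $N \cap M = N \in t$). In every case $N \cap M \in s \cup t$. Together with the fact that $s$ and $t$ are each closed under intersections, this shows $s \cup t$ is closed under intersections, hence is a precondition that is closed under intersections, hence (trivially, or via Lemma \ref{lemma:weakclosure}) a condition. The one genuine obstacle to nail down rigorously is verifying that $s \cup t$ is $\in$-increasing in the first place — i.e., that it is a precondition — since $t$'s models all have rank below that of $Q$ while the models of $s$ above $Q$ have larger rank, and the models of $s$ below $Q$ are exactly $\res_Q(s) \subseteq t$; one must check there is no rank clash, and this follows because $t \subseteq Q$ pins every model of $t$ strictly below $Q$ in rank, matching the position of $\res_Q(s)$ in $s$. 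That bookkeeping — confirming the linear $\in$-order on $s \cup t$ — is where I expect to spend the most care, though it is ultimately routine given the transitivity of $Q$ and the hypothesis $t$ extends $\res_Q(s)$.
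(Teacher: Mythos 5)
Your overall strategy (show $s\cup t$ is already closed under intersections, after checking it is $\in$-increasing) is the right one, and your handling of the cases $M\in Q$ and $M=Q$, as well as the rank bookkeeping for $\in$-increasingness, is fine. But the crucial case $Q\in M$ is handled by a false step: you assert ``since $Q\in M$ and $Q$ is transitive, $Q\subseteq M$.'' Transitivity of $Q$ says that \emph{elements of $Q$} are subsets of $Q$; it gives no information about models $M$ containing $Q$. When $M$ is a small node of $s$ above $Q$ (the typical and unavoidable case), $M$ is countable while $Q$ is an uncountable transitive elementary submodel of $H(\theta)$, so $Q\not\subseteq M$, and your conclusion $N\in M$ does not follow from $N\in Q\in M$. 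Even if $N\in M$ did hold, your final inference ``$N\in M$, hence $N\cap M=N$'' is also wrong for countable $M$: take $N$ an uncountable transitive node of $t$ lying in $M$; then $N\cap M$ is countable and is a proper subset of $N$. So exactly the pairs that the weak-closure criterion (Lemma \ref{lemma:weakclosure}) forces you to check --- a transitive $W\in t$ inside a small $M\in s$ above $Q$ --- are the ones your argument gets wrong. (Your earlier ``cleanest route'' sketch, claiming $\{P\cap M:P\in t\}$ coincides with $\{P\cap M:P\in\operatorname{res}_Q(s)\}$ and lies in $s$, is likewise unjustified and in fact lands on the wrong side: these intersections live in $t$, not in $s$.)

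The correct treatment of this case uses the hypotheses you did not exploit: closure of $s$ under intersections together with appropriateness of $\cal{S},\cal{T}$. For a small $M\in s$ with $Q\in M$, the intersection $M\cap Q$ is a node of $s$ (closure of $s$) and is a member of $Q$ (Definition \ref{def:appropriate}), hence $M\cap Q\in\operatorname{res}_Q(s)\subseteq t$. Now for any $N\in t$ we have $N\in Q$, so $N\subseteq Q$ by transitivity of $Q$, and therefore
$$
N\cap M \;=\; N\cap (M\cap Q),
$$
which lies in $t$ because $t$ is closed under intersections. (If instead $M$ is a transitive node of $s$ above $Q$, then indeed $Q\subseteq M$ and $N\cap M=N$, so your argument is only valid in that subcase.) With this replacement the proof goes through; note the paper itself does not prove this lemma but quotes it from Neeman, so the comparison here is purely about the correctness of your argument.
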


Note in this case that we don't need to close $s\cup t$ under intersections. However, the case when $Q$ is a small node is much more involved. It is here that the closure under intersections is crucial. The next item ``amalgamates" Lemma 2.21 and Corollary 2.31 of \cite{Neemanttm}.

\begin{lemma}
    Suppose that $s$ is a condition and $Q\in s$ a small node. Let $t\in Q$ be a condition which extends $\operatorname{res}_Q(s)$. Then $s$ and $t$ are directly compatible.

    Furthermore, if $r$ is the closure of $s\cup t$ under intersections, then
    \begin{itemize}
        \item $\operatorname{res}_Q(r)=t$, and
        \item the small nodes of $r$ outside of $Q$ are all of the form $N$ or $N\cap W$ where $N\in s$ is small and $W\in t$ is transitive.
    \end{itemize}
\end{lemma}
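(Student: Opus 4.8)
The plan is to produce an explicit description of the closure $r$ of $s\cup t$ under intersections and then verify it is a condition; since $r$ is closed under intersections by construction, Lemma~\ref{lemma:weakclosure} reduces the latter to showing that $r$ is a precondition, and the explicit description will also deliver the two bullet points. I would first record three facts. (i) Since $t\in Q$, $Q\prec H(\theta)$, and $Q$ satisfies that $t$ is finite, every node of $t$ is a member of $Q$. (ii) Since $t$ extends $\operatorname{res}_Q(s)$ in the poset, $\operatorname{res}_Q(s)\seq t$; in particular, every node of $s$ that is a member of $Q$ is a node of $t$. (iii) If $N$ is a countable elementary submodel with $Q\in N$, then $Q\seq N$, because $N$ contains a surjection $\om\to Q$ and $\om\seq N$; this powers Foundation arguments of the form ``$A\in N$ and $A\in W$ yield $A\in A$.''

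Next I would catalogue the nodes produced by closing $s\cup t$ under intersections. Intersections internal to $s$, or to $t$, add nothing. A case analysis using (i)--(ii), the $\in$-increasing shape of $s$ and of $t$, and appropriateness shows that the only new nodes are those of the form $N\cap W$ with $N\in s$ small, $N=Q$ or $Q\in N$, and $W\in t$ transitive; for such a pair $W\in N$ (since $W\in Q\seq N$, or $W\in Q=N$), so $N\cap W\in\cal{S}$ and $N\cap W\in W$ by appropriateness. After checking that this collection is itself closed under the remaining intersections (e.g.\ $N\cap W$ against a transitive $W'\in t$ returns $N\cap W$ or $N\cap W'$, according to the $\in$-order of $W$ and $W'$ in $t$), one concludes
\[
r\;=\;s\cup t\cup\{\,N\cap W : N\in s\text{ small},\ N=Q\text{ or }Q\in N,\ W\in t\text{ transitive}\,\}.
\]

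It then remains to show that $r$ is a precondition, i.e.\ that, listed in increasing von Neumann rank, its members form an $\in$-increasing sequence; this is the step I expect to be the real obstacle. The awkwardness is that each new node $N\cap W$ must be inserted below $W$ and interleaved with the transitive nodes of $t$ and with whichever nodes of $s$ lie below $Q$ in rank, and one has to check that consecutive models in the resulting list are genuinely $\in$-related, with appropriateness and the closure under intersections of $s$ and of $t$ supplying the needed memberships. This is precisely the combinatorics behind Lemma~2.21 and Corollary~2.31 of \cite{Neemanttm}, and I would carry it out as there. Granting it, $r$ is a condition, which is the assertion that $s$ and $t$ are directly compatible.

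Finally I would read off the two bullets. Each new node $N\cap W$ lies outside $Q$: were $N\cap W\in Q$, then $N\cap W\in N$ — at once if $N=Q$, and by (iii) if $Q\in N$ — so $N\cap W\in N\cap W$, a contradiction. By (ii) every node of $s$ lying in $Q$ is a node of $t$; together with $Q\notin Q$ this shows that the members of $Q$ among the nodes of $r$ are exactly the nodes of $t$, so $\operatorname{res}_Q(r)=t$. And the displayed formula for $r$ says precisely that the small nodes of $r$ outside $Q$ are of the form $N$ with $N\in s$ small — this covers $Q$ and the small nodes of $s$ above $Q$ — or $N\cap W$ with $N\in s$ small and $W\in t$ transitive, as claimed.
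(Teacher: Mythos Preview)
The paper does not prove this lemma; it simply records it as the combination of Lemma~2.21 and Corollary~2.31 of \cite{Neemanttm} and refers the reader there for the argument. Your outline is a faithful sketch of Neeman's proof and, like the paper, explicitly defers the central combinatorial verification---that the displayed set is $\in$-increasing---to that source, so the two approaches coincide.

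One caution: your parenthetical ``this covers $Q$ and the small nodes of $s$ above $Q$'' suggests you may be assuming that every node of $s$ of rank below $Q$ lies in $Q$. That is false in general: if $W'\in\operatorname{res}_Q(s)$ is transitive then $Q\cap W'\in s$ is a small node outside $Q$ with rank below $Q$ (this is exactly the example in the paper's remark following the definition of direct compatibility). This does not damage your conclusions---such a node is still of the form ``$N$ with $N\in s$ small,'' and intersecting it with nodes of $t$ produces nothing outside your displayed formula for $r$---but your case analysis should not silently exclude it.
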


The next (rather comforting) lemma tells us that if a condition is inside a model, then we can add that model to the condition.

\begin{lemma}\label{lemma:tInM}
    Let $M\in\mathcal{S}\cup\mathcal{T}$, and let $t$ be a condition in $M$. Then there is a condition $r\leq t$ with $M\in r$. In fact, the closure of $t\cup\lb M\rb$ under intersections is such an $r$.
\end{lemma}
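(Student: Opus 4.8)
The plan is to verify that $r$, the closure of $t \cup \{M\}$ under intersections, is in fact a condition, and that $r \leq t$; the latter is immediate from the definition of the ordering, since $\{M_k : k < m\} \cup \{M\} \subseteq r$ once we know $r$ is a condition. The content is therefore entirely in showing $r$ is a condition, i.e. that it is a finite $\in$-increasing sequence of elements of $\mathcal{S} \cup \mathcal{T}$ which is closed under intersections.

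First I would observe that since $t \in M$ and $t$ is a condition, all nodes of $t$ are members of $M$ (by finiteness of $t$ and elementarity/transitivity considerations: each $N \in t$ satisfies $N \in M$ because $t \in M$ and $M$ sees the finite sequence $t$). Hence $t \cup \{M\}$ is $\in$-increasing: every node of $t$ is an element of $M$, and $M$ goes on top. Now I need to understand what new nodes appear when we close $t \cup \{M\}$ under intersections. The only new intersections are of the form $N \cap M$ for $N \in t$; but since $N \in M$, we have $N \subseteq M$ (if $N$ is small, $N \in M \prec H(\theta)$ and $M$ is transitive-closed under the relevant operations — more carefully, $N \in M$ and $N$ countable forces $N \subseteq M$ since $M$ contains an enumeration of $N$; if $N \in \mathcal{T}$ is transitive, then $N \in M$ gives $N \subseteq M$ as $M \prec H(\theta)$). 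Either way $N \cap M = N$, so no genuinely new node is added: $r = t \cup \{M\}$ as a set.

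Next I would check that $r = t \cup \{M\}$ is closed under intersections and hence a condition. By Lemma \ref{lemma:weakclosure} it suffices to verify the weak closure property $(*)$: if $W$ and $N$ are nodes in $r$ of transitive and small type respectively with $W \in N$, then $N \cap W \in r$. If both $W, N \in t$, this holds because $t$ is already a condition. The only new case is when $N = M$ (i.e. $M$ is small) and $W \in t$ is transitive; then I must check $M \cap W \in r$. Here is where I would use the second clause of appropriateness (Definition \ref{def:appropriate}): since $W \in \mathcal{T}$, $M \in \mathcal{S}$, and $W \in M$, we get $M \cap W \in W$ and $M \cap W \in \mathcal{S}$. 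But $M \cap W \in W$ and $W \in t$, together with $t$ being a condition containing $W$ — I want $M \cap W$ to already be a node of $t$. This requires knowing that $t$, being a condition with $W \in t$ and $t \in M$, must contain $M \cap W$; this is exactly the kind of fact established in Neeman's development (it follows because $t \in M$, so $M$ knows $t$, and $t \cap W$, which equals $\operatorname{res}_W(t)$, is a condition in $W$ that $M$ can compute, giving $M \cap W \cap \bigcup t$-type closure — more directly, any condition $t$ with $W$ on it and $t \in M$ satisfies $\operatorname{res}_W(t) = t \cap M \cap W$-closure forcing $M \cap W$ onto $t$). I would cite the relevant residue/intersection lemmas from Section \ref{Section:NeemanSequence} rather than reprove this.

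The main obstacle I anticipate is precisely this last point: cleanly justifying that when $M$ is of small type, the model $M \cap W$ already appears on $t$ for each transitive $W \in t$, so that adding $M$ does not force us to add anything below $M$. This is the only place the argument has real content, and it is where appropriateness and the structural lemmas about residues (Lemma on $\operatorname{res}_Q$, and the small-node amalgamation lemma) do the work. Everything else — $\in$-increasingness, the triviality of the intersections $N \cap M = N$, and the reduction via Lemma \ref{lemma:weakclosure} — is routine bookkeeping. I would structure the write-up as: (1) nodes of $t$ are subsets of $M$, hence $t \cup \{M\}$ is $\in$-increasing and its closure under intersections is $t \cup \{M\}$ itself; (2) apply Lemma \ref{lemma:weakclosure}, reducing to property $(*)$; (3) dispatch the only nontrivial instance of $(*)$ using appropriateness plus the residue lemma; (4) conclude $r = t \cup \{M\}$ is a condition with $r \leq t$ and $M \in r$.
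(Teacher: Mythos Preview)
Your argument contains a genuine error. You claim that if $W\in\cal{T}$ is transitive and $W\in M$ with $M\in\cal{S}$, then $W\subseteq M$. This is false: $M$ is countable while $W=H(\alpha)$ is uncountable, so $W\not\subseteq M$. Consequently $M\cap W$ is a proper subset of both $M$ and $W$, and hence a genuinely new node; your conclusion that $r=t\cup\{M\}$ is wrong whenever $M$ is small and $t$ contains a transitive node. Your attempted recovery, arguing that $M\cap W$ must already lie in $t$, also fails: take $t=\{W\}$ for a single transitive $W\in M$. This is a perfectly good condition in $M$, and $M\cap W\notin t$. The residue lemmas you gesture at do not help here, since they describe how to amalgamate a condition in $M$ with one containing $M$; they do not force $M\cap W$ onto an arbitrary $t\in M$.

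The correct argument (carried out in Neeman's paper; the present paper only quotes the lemma) proceeds by explicitly identifying the closure: when $M\in\cal{S}$, one has $r=t\cup\{M\}\cup\{M\cap W:W\in t\cap\cal{T}\}$, with each $M\cap W$ inserted directly below $W$. One then checks that $r$ is $\in$-increasing (the node of $t$ immediately below $W$ lies in $M\cap W$ since it is in both $M$ and $W$; and $M\cap W\in W$ by appropriateness) and that $r$ satisfies property $(*)$ of Lemma~\ref{lemma:weakclosure}. For the latter, the only new instances are $W'\in M$ and $W'\in M\cap W$ with $W'$ transitive; the first gives $M\cap W'\in r$ by construction, and the second gives $(M\cap W)\cap W'=M\cap W'\in r$ since $W'\subseteq W$. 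The case $M\in\cal{T}$ is the easy one you essentially handled: there $t\cup\{M\}$ really is closed under intersections.
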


The following lemma (combining Claim 5.7 and Remark 5.8 of \cite{Neemanttm}) shows that under certain conditions, any transitive node can be added to any condition. We also include a helpful claim which extracts an additional feature of the proof. We will use these facts in later sections:

\begin{lemma}\label{lemma:addtransitivenodes}
    Suppose that $\theta$ is an inaccessible cardinal, and let $\la H(\theta);A_1,\dots,A_k\ra$ be an expansion of $H(\theta)$ by finitely many predicates. Suppose that $\cal{T}$ consists exactly of nodes $H(\al)$ so that $\al$ has uncountable cofinality (equivalently, $H(\al)$ is countably closed) and so that 
    $$
    \la H(\alpha);A_1\cap H(\alpha),\dots,A_k\cap H(\alpha)\ra\prec \la H(\theta);A_1,\dots,A_k\ra.
    $$
    Suppose further that every $M\in\cal{S}$ is elementary in $\la H(\theta);A_1,\dots,A_k\ra$. Then for any condition $s$ in $\ps_{\cal{S},\cal{T},H(\theta)}$ and any node $H(\al)\in\cal{T}$:
    \begin{enumerate}
        \item there is an $r\leq s$ with $H(\al)\in r$, and
        \item one can find such a condition $r$ where the new nodes in $r$ are either transitive or of the form $N\cap W$ where $N\in s$ and $W\in\cal{T}$.
    \end{enumerate}
\end{lemma}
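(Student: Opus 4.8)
The plan is to prove (1) and (2) together by induction on the length of $s$, in each step operating on the $\in$-largest node $N^*$ of $s$ and on $s_0:=s\bsl\lb N^*\rb$, which is a condition that lies in $N^*$ (all of its nodes are members of $N^*$). If $H(\al)$ is already a node of $s$, or $s=\es$, there is nothing to do. Otherwise the step splits into two cases according to whether $H(\al)\in N^*$ or $N^*\in H(\al)$; that these exhaust the possibilities is the \emph{comparability fact}, which I discuss last.

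Suppose first $H(\al)\in N^*$ (when $N^*$ is small this just says $\al\in N^*$; when $N^*=H(\ga)$ is transitive it says $\al<\ga$). Applying the inductive hypothesis to the shorter condition $s_0$ gives a condition $r_0\le s_0$ with $H(\al)\in r_0$ whose new nodes over $s_0$ are transitive or of the form $N\cap W$ with $N\in s_0$ and $W\in\cal T$. A rank/cardinality computation then shows $r_0$ lies in $N^*$: when $N^*$ is small, the new nodes are definable from parameters in $N^*$; when $N^*=H(\ga)$, one uses that $\ga$ is a strong limit cardinal, which is forced by $H(\ga)\prec\la H(\theta);A_1,\dots,A_k\ra$ together with the inaccessibility of $\theta$. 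By Lemma \ref{lemma:tInM}, $r:=$ the closure of $r_0\cup\lb N^*\rb$ under intersections is then a condition; plainly $r\le s$ and $H(\al)\in r$, and the new nodes of $r$ over $s$ are those of $r_0$ over $s_0$ together with the extra intersections $N^*\cap W$ ($W$ a transitive node of $r_0$) produced by the closure, which have the required form -- using appropriateness to see $N^*\cap W\in\cal S$ when $N^*$ is small.

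Suppose instead $N^*\in H(\al)$. Then every node of $s$ is a member of $H(\al)$ -- the top node $N^*$ by hypothesis and the remaining nodes because they are members of $N^*\in H(\al)$, hence members of $H(\al)$ by transitivity. Thus $s\in H(\al)$ (countably closed), so by Lemma \ref{lemma:tInM} the closure $r$ of $s\cup\lb H(\al)\rb$ is a condition; and since $N\in H(\al)$ forces $N\seq H(\al)$, no new intersections appear, so $r=s\cup\lb H(\al)\rb$ with the single new, transitive node $H(\al)$. In either case the new nodes satisfy the weak closure property $(*)$ of Lemma \ref{lemma:weakclosure} automatically.

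The main obstacle is the comparability fact: every $M\in\cal S$ is $\in$-comparable with every $H(\al)\in\cal T$. To prove it, assume $\al\notin M$ and argue that every ordinal of $M$ lies below $\al$ -- which, by elementarity of $M$ in the expansion, yields $M\seq H(\al)$. The cofinality hypothesis provides the leverage: since $\cf(\al)>\om$ and $M$ is countable, $\sup(M\cap\al)<\al$, so if some ordinal of $M$ were $\ge\al$ there would be a least such $\xi$, necessarily $>\al$ and with $M\cap[\al,\xi)=\es$. Using that $\lb\ga:H(\ga)\in\cal T\rb$ is a closed, definable subclass of $\theta$, elementarity of $M$ forces $H(\xi)\in\cal T$ with $\cf(\xi)>\om$, hence $H(\xi)\in M$; then appropriateness gives $M\cap H(\xi)\in\cal S$ with $M\cap H(\xi)\in H(\xi)$, and combined with the minimality of $\xi$ this should produce a contradiction. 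This is precisely where one uses that $\cal T$ is the \emph{full} family of countably closed elementary submodels of the form $H(\al)$, that the members of $\cal S$ are elementary in the \emph{same} expansion, and the appropriateness of $(\cal S,\cal T)$. The base case $s=\es$ is immediate.
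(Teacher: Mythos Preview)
Your inductive framework is reasonable, but the proof has a genuine gap: the \emph{comparability fact} is false. It is not true that every $M\in\cal S$ is $\in$-comparable with every $H(\al)\in\cal T$. Concretely, take any countable $M\in\cal S$ and any $\al$ with $H(\al)\in\cal T$, $\al\notin M$, but $M\cap\theta\not\seq\al$; such configurations are abundant, since $M\cap\theta$ is countable while the set of $\al$ with $H(\al)\in\cal T$ is stationary in $\theta$. Then $H(\al)\notin M$ (as $\al\notin M$), and $M\notin H(\al)$ (as the transitive closure of $M$ contains an ordinal $\ge\al$). So your two cases are not exhaustive when $N^*$ is a small node.

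Your attempted proof of comparability breaks in two places. First, $\lb\ga:H(\ga)\in\cal T\rb$ is not closed: it omits all points of countable cofinality, so you cannot invoke closure to place $\xi$ in it. Second, and more fundamentally, even granting $H(\xi)\in\cal T$ --- which \emph{is} true, and is exactly the content of Lemma~\ref{lemma:addTopTransitive}(3) with $\xi=\al^*$ --- no contradiction follows: having $M\cap H(\xi)\in\cal S\cap H(\xi)$ is simply a fact, not an absurdity. In the actual argument (Neeman's Claim~5.7 and Remark~5.8, summarized in Lemma~\ref{lemma:addTopTransitive}), this ordinal $\al^*:=\min((M\cap\theta)\setminus\al)$ is not used to derive a contradiction but rather to insert an \emph{additional} transitive node $H(\al^*)$ into the condition: since $\al^*\in M$ one can interpose $H(\al^*)$ between $H(\al)$ and $M$, noting that $M\cap H(\al^*)=M\cap H(\al)$, and then continue below $H(\al^*)$. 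This is precisely why clause~(2) of the lemma permits several new transitive nodes rather than only $H(\al)$ itself. Your induction needs a third case handling exactly this situation.
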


The next lemma extracts an additional feature of the proof of Claim 5.7 and Remark 5.8 of \cite{Neemanttm}.

\begin{lemma}\label{lemma:addTopTransitive} (Under the same assumptions as Lemma \ref{lemma:addtransitivenodes}) Suppose that $s$ is a condition, $H(\al)\in \cal{T}$, and that $s\not\subseteq H(\alpha)$. Let $M$ be the least member of $s$ not in $H(\alpha)$, and suppose that $M\neq H(\alpha)$. Then
\begin{enumerate}
    \item $(M\cap\theta)\setminus\alpha\neq\emptyset$;
    \item if $\alpha\in M$, then there is an $r$ satisfying (1) and (2) of Lemma \ref{lemma:addtransitivenodes} so that the transitive nodes of $r$ are exactly $H(\alpha)$ and those of $s$;
    \item if $\alpha\notin M$, then setting $\alpha^*:= \min((M\cap\theta)\setminus\alpha)$, we have that $H(\alpha^*)$ is a member of $\cal{T}$.
\end{enumerate}
\end{lemma}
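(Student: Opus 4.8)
The plan is to prove the three clauses separately. Throughout I use that $H(\al)\in\cal{T}$ forces $\al$ to be a strong limit cardinal of uncountable cofinality with $\la H(\al);\vec{A}\cap H(\al)\ra\prec\la H(\theta);\vec{A}\ra$, where $\vec{A}:=A_1,\dots,A_k$ (the first fact because $\la H(\al);\vec{A}\ra\prec\la H(\theta);\vec{A}\ra$ makes $H(\al)$ closed under the power set operation). For clause (1) I would show that the bare hypothesis $M\notin H(\al)$ already forces $M$ to meet $[\al,\theta)$. If $M$ is a small node, then $M\notin H(\al)$ gives $M\not\seq H(\al)$ (as $H(\al)$ is transitive); picking $x\in M\bsl H(\al)$ we get $|\operatorname{trcl}(x)|\geq\al$, and by elementarity of $M$ the cardinal $|\operatorname{trcl}(x)|^{+}$ lies in $M\cap\theta$ and exceeds $\al$, so $(M\cap\theta)\bsl\al\neq\es$. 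If instead $M$ is a transitive node, write $M=H(\be)$ with $\be$ a cardinal, so $M\cap\theta=\be$; by hypothesis $\be\neq\al$, and $\be<\al$ is impossible, for then $\be\in H(\al)$ and elementarity of $\la H(\al);\vec{A}\ra$ (reflecting the existence of the set $H(\be)$) would give $H(\be)\in H(\al)$, contradicting $M\notin H(\al)$. Hence $\be>\al$ and $[\al,\be)\seq(M\cap\theta)\bsl\al$.

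For clause (2) I would sharpen the construction behind Lemma \ref{lemma:addtransitivenodes}. Since $\al\in M$ we have $H(\al)\in M$. Because membership in $H(\al)$ is $\in$-downward closed along the chain $s$ and $M$ is its $\in$-least node outside $H(\al)$, the residue $t:=\res_{H(\al)}(s)$ is exactly the initial segment of $s$ strictly $\in$-below $M$, and each node of $t$ is a subset of $H(\al)$. Then $s\cup\lb H(\al)\rb$ is again a precondition: each node of $t$ is a member of $H(\al)$, $H(\al)\in M$, and $H(\al)\in N$ for every small node $N$ of $s$ that is $\in$-above $M$, since a small model which is a member of such an $N$ is a subset of it. Let $r$ be the closure of $s\cup\lb H(\al)\rb$ under intersections; by Definition \ref{def:appropriate} together with the closure facts from Section \ref{Section:NeemanSequence}, $r$ is a condition with $r\leq s$ and $H(\al)\in r$. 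A node-by-node check shows the only new nodes of $r$ are small: for a transitive node $N$ of $s$ $\in$-above $H(\al)$ one has $H(\al)\in N$, so $N\cap H(\al)=H(\al)$ and nothing new appears (in particular $M\cap H(\al)=H(\al)$ if $M$ itself is transitive), while for a small node $N$ of $s$ with $H(\al)\in N$ appropriateness gives $N\cap H(\al)\in\cal{S}$ with $N\cap H(\al)\in H(\al)$; iterating, every new node has the form $N'\cap H(\al)$ with $N'\in s$. Thus $r$ satisfies (1) and (2) of Lemma \ref{lemma:addtransitivenodes}, and its transitive nodes are precisely those of $s$ together with $H(\al)$.

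For clause (3), by clause (1) the ordinal $\al^*:=\min((M\cap\theta)\bsl\al)$ is defined; since $\al\notin M$ while $\al^*\in M$ we have $\al^*>\al$, so $H(\al^*)\in M$, and by minimality $M\cap\al^*=M\cap\al\seq\al$. I would first deduce $\cf(\al^*)>\om$: if $\cf(\al^*)=\om$, then since $\al^*\in M\prec\la H(\theta);\vec{A}\ra$ the model $M$ contains a function $f\colon\om\to\al^*$ cofinal in $\al^*$, whence $\ran(f)\seq M\cap\al^*$ is cofinal in $\al^*$, contradicting $M\cap\al^*\seq\al<\al^*$. Next, to obtain $\la H(\al^*);\vec{A}\cap H(\al^*)\ra\prec\la H(\theta);\vec{A}\ra$ I would run the Tarski--Vaught reflection argument relative to $M$: for each formula $\psi(y,\vec{w})$ let $E_\psi$ be the set of $\ga<\theta$ such that every instance $\exists y\,\psi(y,\vec{a})$ with $\vec{a}\in H(\ga)$ which is true in $\la H(\theta);\vec{A}\ra$ has a witness already in $H(\ga)$; then $E_\psi$ is closed unbounded in $\theta$, is definable over $\la H(\theta);\vec{A}\ra$ without parameters, and $\al\in E_\psi$ because $H(\al)\in\cal{T}$. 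If $\al^*\notin E_\psi$, then by elementarity of $M$ (using $\al^*\in M$), closedness of $E_\psi$, and $\cf(\al^*)>\om$, the ordinal $\de:=\sup(E_\psi\cap\al^*)$ lies in $M$ and satisfies $\de<\al^*$, hence $\de<\al$ by minimality of $\al^*$; but $\al\in E_\psi\cap\al^*$ forces $\de\geq\al$, a contradiction. So $\al^*\in E_\psi$ for all $\psi$, giving $H(\al^*)\prec\la H(\theta);\vec{A}\ra$, and together with $\cf(\al^*)>\om$ this is exactly $H(\al^*)\in\cal{T}$.

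The main obstacle is clause (3): one has to package the reflection so that the elementarity of the \emph{countable} model $M$ can be invoked. One cannot quantify over formulas inside $\la H(\theta);\vec{A}\ra$, so the argument must proceed formula by formula through the sets $E_\psi$, each of which is genuinely definable over $\la H(\theta);\vec{A}\ra$ and genuinely closed in $\theta$, and then bring in $M\cap\al^*\seq\al$ at exactly the step where $\sup(E_\psi\cap\al^*)$ is pinned below $\al$. A secondary, more routine point arises in clause (2): one must check against the bookkeeping of Section \ref{Section:NeemanSequence} that closing $s\cup\lb H(\al)\rb$ under intersections produces a genuine condition and introduces no transitive node beyond $H(\al)$; this is precisely where Definition \ref{def:appropriate} does the work.
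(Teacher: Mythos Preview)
The paper does not supply its own proof of this lemma; it merely records that the statement ``extracts an additional feature of the proof of Claim 5.7 and Remark 5.8'' of Neeman \cite{Neemanttm}. Your argument follows exactly the line of that proof and is correct.

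Two small points of presentation. In clause (3) you only argue against $\cf(\al^*)=\om$; you should also rule out $\al^*$ being a successor, but the same minimality trick does it (if $\al^*=\ga+1$ then $\ga\in M\cap\al^*\seq\al$, forcing $\al^*\leq\al$). The invocation of $\cf(\al^*)>\om$ in the Tarski--Vaught step is in fact unnecessary: closedness of $E_\psi$ together with $\al^*\notin E_\psi$ already gives $\de<\al^*$, and $\de\in M$ follows from definability and $\al^*\in M$ alone.

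In clause (2), the assertion that the closure of $s\cup\{H(\al)\}$ under intersections is a condition is correct but not quite covered by the lemmas you cite: closing a precondition under intersections need not preserve $\in$-increasingness in general. The missing observation is that the only genuinely new small nodes are the $M_i\cap H(\al)$ for the interval $M=M_0\in\dots\in M_k$ of small nodes of $s$ between $M$ and the first transitive node $W$ above $M$. Indeed, for any small $N\in s$ above $W$ with $H(\al)\in N$ one has $N\cap H(\al)=(N\cap W)\cap H(\al)$, and by closure of $s$ the node $N\cap W$ lies in $t\cup\{M_0,\dots,M_k\}$; in the first case $N\cap H(\al)=N\cap W\in s$ already, and in the second $N\cap H(\al)=M_i\cap H(\al)$ for some $i$. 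These new nodes satisfy $M_i\cap H(\al)\in M_{i+1}\cap H(\al)$ (since $M_i,H(\al)\in M_{i+1}$ by the countable-in-countable argument) and sit strictly above all of $t$ (since the top node of $t$ lies in $M\cap H(\al)$), so the resulting sequence is $\in$-increasing. With this filled in, your clause (2) is complete and yields exactly the transitive-node count claimed.
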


\section{Basic Features of the $\pfa_\Phi(X)$ Poset}\label{Section:Poset}

In this section, we define a ``$\pfa_\Phi(X)$-type" poset, where $X$ and $\Phi$ are parameters representing, respectively, a topological space whose size is below a given supercompact and a topological property of that space. In the case when $\Phi$ is either ``Lindel{\"o}f" or ``countably tight", we will show in later sections that forcing with the corresponding $\pfa_\Phi(X)$-type poset will give a model of $\pfa_\Phi(X)$. The purpose of this section is, after giving the definition, to highlight a number of general properties of this kind of poset. Most of our work in the current section involves translating Section 6 of \cite{Neemanttm}.

\begin{assumption}\label{GroundModelassumption}
    $V$ is a ground model of the $\gch$, $\theta$ is a supercompact cardinal in $V$, and $J:\theta\to H(\theta)$ is a Laver function for $\theta$. Finally, $X$ is a space of size $<\theta$ satisfying the topological property $\Phi$.
\end{assumption}

The forcing which we will use to create a model of $\mathsf{PFA}_\Phi(X)$ (see Definition \ref{def:axioms}) is a slight variation of the finite conditions poset from \cite{Neemanttm} which creates a model of $\mathsf{PFA}$.

We first define the permissible stages for the working part of a condition, and then we will define the sets $\cal{S}$ and $\cal{T}$ of nodes. Let $Z$ be the set of $\al<\theta$ so that $(H(\al),\in,J\upharpoonright\al,\lb X\rb)$ is elementary in $(H(\theta),\in,J,\lb X\rb)$. For each $\al\in Z$, define $\varphi(\al)$ to be the least cardinal so that $J(\al)\in H(\varphi(\al))$, and observe that $\varphi(\al)<\min(Z\bsl(\al+1))$.

Next, let $Z^*$ denote $Z\cap\operatorname{cof}(>\omega)$, and define $\cal{T}$ to be the set 
$$
\lb H(\al):\al\in Z^*\rb.
$$
Observe that if $\al\in Z$ has uncountable cofinality (i.e., if $\al\in Z^*$), then $H(\al)$ is countably closed. We define $\cal{S}$ to be the set of countable elementary submodels of $(H(\theta),\in,J,\lb X\rb)$. Note that $\cal{S}$ and $\cal{T}$ are appropriate for $H(\theta)$ (Definition \ref{def:appropriate}).

The next item contains the definition of the poset $\bb{P}_\Phi(X)$ which will give us a model of $\pfa_\Phi(X)$, at least for certain values of the parameters $X$ and $\Phi$. The definition is by recursion on $\al\in Z$, since we need to know $\bb{P}\cap H(\al)$ in order to evaluate certain names:

\begin{definition}\label{def:PFAiteration}
    Conditions in $\bb{P}_\Phi(X)$ (hereafter $\ps$) are pairs $\la s,p\ra$ satisfying the following:
    \begin{enumerate}
        \item $s$ is a condition in the model sequence poset $\ps_{\cal{S},\cal{T},H(\theta)}$ from Definition \ref{def:sequencePoset}.
        \item $p$ is a finite partial function whose domain is a (perhaps proper) subset of the set of $\al\in Z^*$ so that
        \begin{enumerate}
            \item $H(\al)\in s$, and
            \item $\ps\cap H(\al)$ forces that $J(\al)$ preserves that $X$ has property $\Phi$, and\footnote{We will later address minor technicalities which arise if $\ps\cap H(\al)$ does not force this condition (2b) or does not force the next condition (2c).}
            \item $\ps\cap H(\al)$ forces that $J(\al)$ is proper for all models of the form\footnote{Note that by $M[\dot{G}_\al]$ we mean the name for $M[G_\al]$, whenever $G_\al$ is a $V$-generic filter over $\ps\cap H(\al)$.} $M[\dot{G}_\al]$ where $M\in\cal{S}$ and $\al\in M$.
        \end{enumerate}
    
        \item For each $\al\in\dom(p)$, $p(\al)\in H(\varphi(\al))$.
        \item For each $\al\in\dom(p)$, 
        $$
        \Vdash_{\ps\cap H(\al)}p(\al)\in J(\al).
        $$
        \item For each $\al\in\dom(p)$ and for each small node $M\in s$ with $\al\in M$,
        $$
        \la s\cap H(\al),p\upharpoonright\al\ra\Vdash_{\ps\cap H(\al)} p(\al)\text{ is an }(M[\dot{G}_\al],J(\al))\text{-generic condition}.
        $$ 
    \end{enumerate}
    We order conditions as follows: $\la s^*,p^*\ra\leq\la s,p\ra$ iff 
    \begin{itemize}
        \item $s^*\leq s$ in the model sequence poset,
        \item $\dom(p)\seq\dom(p^*)$, and
        \item for all $\al\in\dom(p)$,
        $$
         \la s^*\cap H(\al),p^*\upharpoonright\al\ra\Vdash_{\ps\cap H(\al)} p^*(\al)\leq_{J(\al)}p(\al).
        $$
    \end{itemize}
\end{definition}

    The only difference of difference between our definition and that of \cite{Neemanttm} is that we require the posets used along the way to preserve the property $\Phi$ of $X$. Accordingly, certain proofs from \cite{Neemanttm} carry through without change to the case under consideration here. We summarize some of these in what follows, beginning with a few comments about condition (5).

\begin{remark}\label{remark:condition5initialsegment}
    Condition (5) of Definition \ref{def:PFAiteration} holds for $\al$ and $M$ iff it holds for $\al$ and $M\cap H(\ga)$ whenever $\ga\in Z$ is above $\al$ (Remark 6.4 of \cite{Neemanttm}).
\end{remark}
    
The following lemma (Claim 6.5 of \cite{Neemanttm}) gives a seemingly weaker version of (5) which is in fact equivalent and easier to check ``in practice."

\begin{lemma}\label{lemma:genericforInterval}
    Condition (5) in Definition \ref{def:PFAiteration} is equivalent to the same condition restricted to $M\in s$ which occur above $H(\al)$ and so that there are no transitive nodes of $s$ between $H(\al)$ and $M$.
\end{lemma}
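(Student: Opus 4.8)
The plan is to prove the two implications separately. The forward direction is immediate, since the restricted version of condition (5) merely quantifies over a subclass of the pairs $(\alpha,M)$ appearing in the full condition (5): if $M\in s$ is small and occurs above $H(\alpha)$ in $s$, then $H(\alpha)\in M$, whence by elementarity of $M$ in $(H(\theta),\in,J,\{X\})$ we also have $\alpha\in M$. So (5) outright entails its restriction, and all the work is in the converse.

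For the reverse direction I would fix $\alpha\in\dom(p)$ and a small node $M\in s$ with $\alpha\in M$, and assume the restricted form of (5). By clause (2)(a) of Definition \ref{def:PFAiteration} we have $H(\alpha)\in s$, and $H(\alpha)$ precedes $M$ in $s$ since $H(\alpha)\in M$ and the sequence is ordered by von Neumann rank. Let $n(\alpha,M)$ be the number of transitive nodes $W$ of $s$ with $H(\alpha)\preceq W\prec M$ in sequence order; this is a finite integer, and it is at least $1$ because $H(\alpha)$ itself is counted. I would induct on $n(\alpha,M)$. If $n(\alpha,M)=1$, then $H(\alpha)$ is the only transitive node of $s$ weakly above $H(\alpha)$ and strictly below $M$, so there is no transitive node of $s$ strictly between $H(\alpha)$ and $M$; thus $(\alpha,M)$ is precisely a pair handled by the restricted form of (5), and we are done. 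If $n(\alpha,M)>1$, let $H(\beta)$ be the largest transitive node of $s$ with $H(\alpha)\prec H(\beta)\prec M$, which exists because $n(\alpha,M)>1$. Then $\beta\in Z^*\subseteq Z$ and $\beta>\alpha$; moreover $H(\beta)\in M$ (the sequence being $\in$-increasing), so by appropriateness (Definition \ref{def:appropriate}) together with closure of $s$ under intersections, $M\cap H(\beta)$ is a small node of $s$ which is a member of $H(\beta)$, and $\alpha\in M\cap H(\beta)$ since $\alpha\in M$ and $\alpha<\beta$. By Remark \ref{remark:condition5initialsegment}, applied with $\gamma=\beta$, condition (5) holds for $(\alpha,M)$ if and only if it holds for $(\alpha,M\cap H(\beta))$. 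Finally, since $M\cap H(\beta)\in H(\beta)$, every transitive node of $s$ that precedes $M\cap H(\beta)$ also precedes $H(\beta)$ and hence precedes $M$, whereas $H(\beta)$ precedes $M$ but not $M\cap H(\beta)$; therefore $n(\alpha,M\cap H(\beta))<n(\alpha,M)$, and the inductive hypothesis applied to $(\alpha,M\cap H(\beta))$ completes the step.

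I do not anticipate a genuine obstacle here: this is a bookkeeping lemma whose only real content is the single reduction step supplied by Remark \ref{remark:condition5initialsegment}. The points that need a little care are (i) confirming that $M\cap H(\beta)$ is itself a node of $s$, which is exactly appropriateness plus closure of $s$ under intersections; and (ii) checking that the induction parameter $n(\alpha,M)$ strictly decreases under the reduction $M\mapsto M\cap H(\beta)$ and that the base case $n(\alpha,M)=1$ matches the hypothesis of the lemma verbatim.
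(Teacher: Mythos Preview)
Your proof is correct and is the standard argument; the paper itself supplies no proof, simply citing Claim~6.5 of \cite{Neemanttm}, and your reduction via Remark~\ref{remark:condition5initialsegment} together with induction on the number of intervening transitive nodes is exactly the intended route. One small point of phrasing: in the forward direction your sentence ``if $M\in s$ is small and occurs above $H(\alpha)$ in $s$, then $H(\alpha)\in M$'' is stated without the hypothesis that there are no transitive nodes between $H(\alpha)$ and $M$; the chain-of-countable-nodes argument you have in mind uses precisely that hypothesis, and since that is the restricted class you are verifying, it would be cleaner to say so explicitly.
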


Next, we introduce various restrictions of $\ps$ to $\be\in Z$ (not just $\beta\in Z^*$). The first of these will simply be $\ps\cap H(\be)$. The other of these restricts only the working part to be below $\be$, but places no constraints on the side conditions part (beyond those of Definition \ref{def:PFAiteration}). Having these notations will facilitate certain later inductive arguments. 

\begin{notation}\label{notation:restriction}
    Given $\be\in Z$, we let $\ps\upharpoonright\beta$ abbreviate $\ps\cap H(\beta)$, with the restriction of the ordering from $\ps$. Given $W\in\cal{T}$, say $W=H(\gamma)$, we use $\ps\upharpoonright W$ to abbreviate $\ps\upharpoonright\gamma$.

    We also let $\ps_\beta$ denote the poset with the same definition as $\ps$, but where the beginning of condition (2) is replaced by the statement ``...whose domain is a subset of the set of $\al\in Z^*\cap\beta$ so that..." This also inherits the ordering from $\ps$.
\end{notation}

The next item is Corollary 6.11 of \cite{Neemanttm}.

\begin{lemma}\label{lemma:addM}
    Let $M$ be a small node and $\la t,q\ra$ a condition in $\bb{P}\cap M$. Let $s$ be the closure of $t\cup\lb M\rb$ under intersections, which by Lemma \ref{lemma:tInM} is a condition in the model sequence poset. Then there is a $p$ so that $\la s,p\ra$ is a condition extending $\la t,q\ra$.
\end{lemma}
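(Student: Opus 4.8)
\textbf{Proof proposal for Lemma \ref{lemma:addM}.}

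The plan is to mirror the proof of the analogous statement in \cite{Neemanttm} (Corollary 6.11), since by the remarks following Definition \ref{def:PFAiteration} the only new ingredient in our poset is the extra preservation clause (2b), which is a property of the $J(\al)$'s and plays no active role in the construction of the new working part. Concretely, I would proceed by recursion along the finitely many elements of $\dom(p)$ that we must manufacture. Note first that $s$ is indeed a condition in the model sequence poset by Lemma \ref{lemma:tInM}, and that by the description there, the new small nodes of $s$ relative to $t$ are exactly $M$ together with nodes of the form $N\cap W$ for $N\in t$ small and $W\in s$ transitive; in particular no new transitive nodes appear, so $\dom(p)$ should be taken to be $\dom(q)$ and we only have to adjust, for each $\al\in\dom(q)$, which models the value $p(\al)$ is forced to be generic for.

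The key steps, in order: First I would fix the $\in$-increasing enumeration $\al_0<\dots<\al_{k-1}$ of $\dom(q)$ and build $p$ by recursion on $i<k$, maintaining that $\la s\cap H(\al_i), p\res\al_i\ra$ is a condition in $\ps\res\al_i$ extending $\la t\cap H(\al_i),q\res\al_i\ra$. For the recursive step at $\al=\al_i$, work below $\la s\cap H(\al),p\res\al\ra$ in $\ps\cap H(\al)$. By clause (5) applied to $\la t,q\ra$ in $M$, $q(\al)$ is already forced to be $(N[\dot G_\al],J(\al))$-generic for every small $N\in t$ with $\al\in N$; what remains is to get genericity for $M[\dot G_\al]$ itself and for the new nodes $N\cap W$ with $\al\in N\cap W$. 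For $M$: since $\al\in M$, elementarity of $M$ together with clause (2c) (which says $\ps\cap H(\al)$ forces $J(\al)$ to be proper for models of the form $M[\dot G_\al]$) lets us, below the relevant condition, extend $q(\al)$ to some $p(\al)\leq_{J(\al)} q(\al)$ which is forced to be $(M[\dot G_\al],J(\al))$-generic; here I would invoke Lemma \ref{lemma:genericforincreasingsequence} (in the generic extension, applied to the relevant $\in$-increasing chain of submodels of the form $N[\dot G_\al]$) to get a single condition generic for \emph{all} the required small models simultaneously, using that $q(\al)\in M$ so the hypotheses of that lemma are met. For the nodes of the form $N\cap W$: by Remark \ref{remark:condition5initialsegment}, being generic for $N$ is equivalent to being generic for $N\cap H(\ga)$ for $\ga\in Z$ above $\al$, so genericity for these intersection nodes is automatic once we have genericity for the relevant $N\in t$, which we already have. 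Finally, by Lemma \ref{lemma:genericforInterval} it is enough to check clause (5) for small nodes occurring above $H(\al)$ with no transitive node of $s$ strictly between, which cuts the verification down to finitely many easily-handled cases and confirms $\la s,p\ra\in\ps$.

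After the recursion terminates, I would verify that $\la s,p\ra$ satisfies clauses (1)--(5) of Definition \ref{def:PFAiteration}: (1) is Lemma \ref{lemma:tInM}; (2a) holds because $\dom(p)=\dom(q)\seq M\seq s$ and transitive nodes of $s$ below $M$ are exactly those of $t$; (2b) and (2c) are inherited from $\la t,q\ra$ since they only concern the $J(\al)$'s and the (unchanged) set $\dom(q)$ of stages; (3) and (4) are preserved because we only replaced $q(\al)$ by a $J(\al)$-extension $p(\al)$ forced to lie in $J(\al)$ and in $H(\vp(\al))$ (the latter using that $q(\al)\in M\prec H(\theta)$ so the extension can be found inside $M$, hence in $H(\vp(\al))$); and (5) is exactly what the recursion arranged. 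The ordering relation $\la s,p\ra\leq\la t,q\ra$ then holds by construction: $s\leq t$ in the model sequence poset, $\dom(q)\seq\dom(p)$ (in fact equality), and $p(\al)\leq_{J(\al)}q(\al)$ is forced at each stage.

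The main obstacle I anticipate is purely bookkeeping rather than conceptual: keeping straight, at each stage $\al_i$, the recursion-inside-a-recursion in which we pass to $V[G_{\al_i}]$, apply properness of $J(\al_i)$ for the finitely many submodels of the form $N[\dot G_{\al_i}]$ (combining Lemma \ref{lemma:genericforincreasingsequence} with clause (2c)), and then pull the resulting generic condition back down to a name $p(\al_i)\in H(\vp(\al_i))$ using elementarity of $M$. One has to be careful that the ``new'' small nodes of $s$ are genuinely handled by the appeal to Remark \ref{remark:condition5initialsegment} and do not require any fresh genericity argument; once that is seen, the footnoted technicalities about $\ps\cap H(\al)$ possibly failing to force (2b) or (2c) are handled exactly as in \cite{Neemanttm} (by the usual convention of treating such stages trivially), and the proof goes through verbatim.
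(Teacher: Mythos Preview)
Your proposal is correct and matches the paper's approach: the paper gives no proof at all and simply cites this as Corollary~6.11 of \cite{Neemanttm}, and you propose to mirror that argument, which is exactly right since clause~(2b) plays no role in constructing the working part. One minor slip in your sketch: the new small nodes of $s$ relative to $t$ are $M$ together with $M\cap W$ for $W\in t$ transitive (not $N\cap W$ for $N\in t$ small, which would already lie in $t$), but your subsequent handling via Remark~\ref{remark:condition5initialsegment} is correct regardless.
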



In the next few items, we review strong properness for transitive nodes and properness for small nodes. The following summarizes Claim 6.6 and Lemma 6.7 from \cite{Neemanttm}.

\begin{lemma}\label{lemma:transitivestronglygeneric}Let $\la s,p\ra$ be a condition in $\ps$ and $W=H(\al)$ a transitive node in $s$.
    \begin{enumerate}
        \item If $\la t,q\ra$ extends $\la s,p\ra\upharpoonright W$ in $\ps\upharpoonright W$, then $\la s,p\ra$ and $\la t,q\ra$ are compatible.
        \item  $\la s,p\ra$ is a strongly $(W,\ps)$-generic condition.
        \item $\ps$ is strongly proper for $\cal{T}$.
    \end{enumerate}
\end{lemma}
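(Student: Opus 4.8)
The three parts are tightly linked: part (3) follows immediately from part (2) once we observe that every transitive node $W = H(\al) \in \cal{T}$ satisfies $W \prec H(\theta)$ (indeed $W$ is an elementary submodel because $\al \in Z^*$), so it suffices to establish (1) and (2). I would prove (1) first, since (2) is essentially a reformulation of (1) in terms of the generic filter. For (1), let $\la s, p \ra$ be a condition with $W = H(\al) \in s$, and let $\la t, q \ra \le \la s, p \ra \res W$ in $\ps \res W$. I want to amalgamate. The model-sequence part: since $W$ is transitive and $t \in Q := W$ (as $\la t,q\ra \in \ps \res W = \ps \cap H(\al)$, so $t \in H(\al) = W$) extends $\operatorname{res}_W(s)$, the earlier amalgamation lemma for transitive $Q$ (the one stating ``$s \cup t$ is a condition, with no closure under intersections needed'') gives that $s \cup t$ is a condition in the model-sequence poset. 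The working-part: I then need to build $p^*$ with domain $\dom(p) \cup \dom(q)$ so that $\la s \cup t, p^* \ra$ is a condition below both $\la s, p \ra$ and $\la t, q \ra$. For $\beta \in \dom(q)$ I set $p^*(\beta) = q(\beta)$; for $\beta \in \dom(p)$, note $\beta \ge \al$ (since every $\beta \in \dom(p)$ with $\beta < \al$ would have $H(\beta) \in s$ below $W$, hence in $\operatorname{res}_W(s) = \operatorname{res}_W(t) \subseteq \dom$-support of $t$... more carefully, $\beta \in \dom(p)$ with $\beta \in W$ forces $\beta \in \dom(q)$ since $\la t,q\ra$ extends $\la s,p\ra \res W$), and I set $p^*(\beta) = p(\beta)$. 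The two halves of $p^*$ live on disjoint parts of $Z^*$ separated by $\al$, and conditions (3), (4) of Definition~\ref{def:PFAiteration} are inherited directly. The only genuine content is verifying condition (5) for $\la s \cup t, p^* \ra$.

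\textbf{Verifying condition (5).} For $\beta \in \dom(q)$ (so $\beta < \al$) and a small $M \in s \cup t$ with $\beta \in M$: if $M \in t$ this is just condition (5) for $\la t, q \ra$; if $M \in s$ but $M \notin W = H(\al)$, then since $\beta < \al < $ (rank of $M$) and $\beta \in M$, elementarity of $M$ forces $\al \in M$ and hence $H(\al) \in M$, so $M \cap H(\al) \in s$ (closure under intersections, using appropriateness), and $M \cap H(\al) \in W$, so $M \cap H(\al)$ is a small node of $t = \operatorname{res}_W(s \cup t)$ with $\beta \in M \cap H(\al)$; then condition (5) for $\la t, q\ra$ applied to $M \cap H(\al)$ combined with Remark~\ref{remark:condition5initialsegment} gives condition (5) for $M$. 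For $\beta \in \dom(p)$ (so $\beta \ge \al$) and small $M \in s \cup t$ with $\beta \in M$: since $\beta \ge \al$ and $t \subseteq H(\al)$, all models of $t$ have rank $\le$ (rank of some ordinal $< \al$)... hence $\beta \notin M$ for $M \in t$; so $M \in s$, and this is condition (5) for $\la s, p \ra$, noting that $s \cap H(\beta) = (s \cup t) \cap H(\beta)$ for $\beta \ge \al$ since $t \subseteq H(\al)$, and similarly $p \res \beta = p^* \res \beta$ on the relevant portion (here I need to be a little careful that $p^* \res \beta$ agrees with what condition (5) for $\la s, p\ra$ used, which it does since $p^* \res \beta = (p \res \beta) \cup q$ and the forcing $\ps \cap H(\beta)$ cannot see past $\al$... actually $q$ is a condition in $\ps \cap H(\al) \subseteq \ps \cap H(\beta)$, so I need that adding $q$ to the lower part doesn't disturb the genericity statement, which holds because the relevant name $p(\beta)$ and the model $M[\dot G_\beta]$ depend only on $\ps \cap H(\beta)$ and strengthening the condition $\la s \cap H(\beta), p \res \beta\ra$ to $\la (s\cup t) \cap H(\beta), p^* \res \beta\ra$ only helps). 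This establishes (1).

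\textbf{From (1) to (2).} For (2), I must show $\la s, p \ra$ forces that $\dot{G} \cap (\ps \cap W)$ is $V$-generic over $\ps \cap W$; equivalently, for every dense $D \subseteq \ps \cap W$ with $D \in V$ and every $\la s^*, p^* \ra \le \la s, p \ra$, there is $\la t, q \ra \in D$ compatible with $\la s^*, p^* \ra$. Given such $D$ and $\la s^*, p^* \ra$, I pass to $\la s^*, p^* \ra \res W \in \ps \cap W$, extend it inside $\ps \cap W$ to some $\la t, q \ra \in D$ (possible since $D$ is dense in $\ps \cap W$), and then apply part (1) to conclude $\la t, q \ra$ is compatible with $\la s^*, p^* \ra$ --- here I use that $\la s^*, p^* \ra \res W = (\la s, p\ra \res W)$-extension compatibility is preserved, i.e., I apply (1) with $\la s^*, p^*\ra$ in place of $\la s, p\ra$ (noting $W \in s^*$ still, as $W \in s \subseteq s^*$). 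It then follows from the standard equivalences that $\la s, p\ra$ is strongly $(W, \ps)$-generic. Part (3) is immediate: for large regular $\lambda$, a club of countable $M \prec H(\lambda)$ reflect enough that $\ps$ being strongly proper for all $W \in \cal{T}$ gives strong properness for the relevant submodels --- but in fact the cleanest statement is simply that (2) says $\ps$ is strongly $(W, \ps)$-generic at every $W \in \cal{T}$, which is the definition of being strongly proper for $\cal{T}$.

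\textbf{Main obstacle.} The delicate point is the bookkeeping in condition (5) when we reduce a small node $M \in s$ lying above $W$ (or straddling $W$) to its trace $M \cap W$ and invoke Remark~\ref{remark:condition5initialsegment}; one must be careful that the residue $\operatorname{res}_W(s \cup t)$ really equals $t$ (not something larger) so that condition (5) for $\la t, q\ra$ is available for exactly the traces we need, and that the ``initial segment'' reflection of condition (5) is being applied at the correct intermediate $H(\ga) \in Z$. This is entirely routine given Neeman's framework --- indeed the whole lemma is a translation of Claim 6.6 and Lemma 6.7 of \cite{Neemanttm}, and the property-$\Phi$ clauses (2b), (2c) of Definition~\ref{def:PFAiteration} play no role here since they only constrain which $\beta$ may appear in a domain and are inherited under the amalgamation --- so I expect no real difficulty beyond careful case analysis.
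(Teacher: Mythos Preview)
The paper does not supply its own proof of this lemma, deferring entirely to Claim~6.6 and Lemma~6.7 of \cite{Neemanttm}; your sketch is a correct reconstruction of that argument, and the amalgamation $\la s\cup t,\,q\cup(p\upharpoonright[\al,\theta))\ra$ together with the passage from (1) to (2) and (3) proceed exactly as you describe. One small correction: when verifying condition~(5) for $\beta<\al$ and a small $M\in s$ with $M\notin H(\al)$, the conclusion $H(\al)\in M$ is not a matter of ``elementarity of $M$'' but follows directly from the $\in$-chain structure of $s$ --- since $W=H(\al)$ and $M$ are both nodes of $s$ and $M\notin W$, the $\in$-increasing order of $s$ forces $W\in M$; closure under intersections then puts $M\cap H(\al)$ into $s\cap W\subseteq t$ as you need.
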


Before addressing properness for small nodes, we will prove that we can always add any transitive node to any condition. Along the way, we need to establish a bit of notation for dealing with the cases when $\ps\cap H(\al)$ does not force that the conditions in (2b) and (2c) of Definition \ref{def:PFAiteration} obtain. Towards this end, we introduce the following notation:

\begin{notation}\label{notation:weirdJ}
    Define $J':\theta\to H(\theta)$ to be the function where for $\al\in Z^*$, $J'(\al)=J(\al)$ if $\ps\cap H(\al)$ forces (2b) and (2c) of Definition \ref{def:PFAiteration} to hold and where $J'(\al)$ is the trivial poset otherwise.
\end{notation}

Having this notation will streamline some later discussions. Now for the lemma on adding transitive nodes:

\begin{lemma}\label{lemma:addtransitivenodesinPoset}
Let $\la s,p\ra$ be a condition and $\al\in Z^*$. Then there is an extension $s^*$ of $s$ with $H(\al)\in s^*$ so that $\la s^*,p\ra$ is a condition.

Furthermore, if $J'(\al)=J(\al)$ (i.e., if $J(\al)$ names an appropriate poset), then there is a finite partial function $p^*$ with $\al\in\dom(p^*)$ so that $\la s^*,p^*\ra$ is a condition extending $\la s,p\ra$.
\end{lemma}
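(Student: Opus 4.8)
The plan is to mimic the proof of Lemma~\ref{lemma:addtransitivenodes} (Neeman's Claim 5.7 and Remark 5.8) but carry along the bookkeeping needed to verify the extra clauses (2b), (2c), (4), and (5) of Definition~\ref{def:PFAiteration}. The point is that adding a transitive node $H(\al)$ to the side condition part $s$ is essentially a ``pure side conditions'' move: it does not itself require changing the working part $p$, provided $H(\al)$ was not already blocking some clause. So first I would apply Lemma~\ref{lemma:addtransitivenodes} (whose hypotheses are met here, since $\theta$ is inaccessible, $\cal{T}$ consists of the countably closed $H(\be)$ with $\be\in Z^*$, and every $M\in\cal{S}$ is elementary in $(H(\theta),\in,J,\{X\})$) to obtain $s^*\le s$ in the model sequence poset with $H(\al)\in s^*$, where moreover every new node of $s^*$ is either transitive or of the form $N\cap W$ with $N\in s$ and $W\in\cal{T}$. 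I would then argue that $\la s^*,p\ra$ is still a condition: clauses (1), (3), (4) are unaffected since $p$ is untouched and $\dom(p)$ already satisfied (3),(4); clause (2) for each $\be\in\dom(p)$ still holds because $H(\be)\in s\seq s^*$ and (2b),(2c) are statements about $\ps\cap H(\be)$ not about $s$; the only thing to check is clause (5), i.e.\ that $p(\be)$ is forced generic for $M^*[\dot G_\be]$ for each newly-added small node $M^*\in s^*$ with $\be\in M^*$. By Lemma~\ref{lemma:genericforInterval} it suffices to check this for the new small nodes $M^*$ occurring above $H(\be)$ with no transitive node of $s^*$ strictly between; and by the structure of $s^*$ such an $M^*$ is of the form $N\cap W$ for some $N\in s$, $W\in\cal{T}$, so by Remark~\ref{remark:condition5initialsegment} clause (5) for $(\be,N\cap W)$ follows from clause (5) for $(\be, N)$ — which already held because $\la s,p\ra$ was a condition — together with $\be\in N\cap W$. (One has to be slightly careful: if $\be\in N$ but $\be\notin W$ then $\be\notin N\cap W$ and there is nothing to check; if $\be\in N\cap W$ then $\be\in N$ so (5) held for $N$.) This establishes the first assertion.

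For the ``Furthermore'' part, suppose $J'(\al)=J(\al)$, i.e.\ $\ps\cap H(\al)$ forces (2b) and (2c) of Definition~\ref{def:PFAiteration}. Now I want to find $p^*\supseteq p$ with $\dom(p^*)=\dom(p)\cup\{\al\}$ so that $\la s^*,p^*\ra\le\la s,p\ra$. I would define $p^*\res\al = p\res\al$ (equivalently, $p^*$ agrees with $p$ on $\dom(p)$ — note $\al\notin\dom(p)$ since $H(\al)\notin s$) and need to choose a name $p^*(\al)$ such that: (4) $\Vdash_{\ps\cap H(\al)} p^*(\al)\in J(\al)$, and (5) for every small node $M\in s^*$ with $\al\in M$,
$$
\la s^*\cap H(\al), p^*\res\al\ra \Vdash_{\ps\cap H(\al)} p^*(\al)\text{ is }(M[\dot G_\al],J(\al))\text{-generic}.
$$
The natural choice is to let $p^*(\al)$ be a name for the trivial/largest condition of $J(\al)$, $\mathbf 1_{J(\al)}$. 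Then (4) is immediate, and (3) holds since $\mathbf 1_{J(\al)}\in H(\vp(\al))$. The ordering clause for $\al$ is vacuous since $\al\notin\dom(p)$, and for $\be\in\dom(p)$ it reads $p^*(\be)\le_{J(\be)}p(\be)$, which holds with equality. So the real content is clause (5) for $\al$: I need $\la s^*\cap H(\al),p\res\al\ra$ to force that $\mathbf 1_{J(\al)}$ is $(M[\dot G_\al],J(\al))$-generic for each small $M\in s^*$ with $\al\in M$. This is where clause (2c) — which holds precisely because $J'(\al)=J(\al)$ — gets used: it says $\ps\cap H(\al)$ forces $J(\al)$ to be proper for all $M[\dot G_\al]$ with $M\in\cal S$ and $\al\in M$; but properness for a model does not give a \emph{specific} generic condition, let alone $\mathbf 1$.

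Consequently the main obstacle, and the place I expect to spend real effort, is clause (5) for the new node $\al$: in general $\mathbf 1_{J(\al)}$ need not be $(M[\dot G_\al],J(\al))$-generic. The fix, and the reason the lemma is true, is that I should not take $p^*(\al)$ to be trivial but rather build it by a finite recursion analogous to Neeman's construction of the working part when adding a transitive node. Concretely: enumerate the (finitely many) small nodes $M$ of $s^*$ with $\al\in M$ in $\in$-increasing order, say $M_0\in\cdots\in M_{j-1}$, and recursively choose (a name for) a descending sequence $p_0\ge p_1\ge\cdots$ in $J(\al)$ where $p_i$ is forced to be $(M_i[\dot G_\al],J(\al))$-generic and $p_i\in M_{i+1}$ (using clause (2c) to get genericity for $M_0$, and at each step using Lemma~\ref{lemma:genericforincreasingsequence} — applied inside the $\ps\cap H(\al)$-extension, to the sequence $M_0[\dot G_\al]\in\cdots\in M_i[\dot G_\al]$ together with the properness given by (2c), and to the fact that $p_{i-1}\in M_i[\dot G_\al]$ — to extend genericity down the whole chain), finally setting $p^*(\al):=p_{j-1}$. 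The elementarity of the $M_i$ in $(H(\theta),\in,J,\{X\})$ and appropriateness of $\cal S,\cal T$ are used to see that $J(\al)$, $\ps\cap H(\al)$, and the relevant traces all lie in the right models so that this recursion can be carried out inside $M_{i+1}$ and the resulting name lands in $H(\vp(\al))$ (giving clause (3)). One then verifies (4) (trivial, as $p_{j-1}\in J(\al)$ throughout) and rechecks that $\la s^*,p^*\ra\le\la s,p\ra$, where the ordering clause at $\al$ is again vacuous. Modulo the routine verification that this recursion goes through — which is exactly the content already packaged in Neeman's Section~6 and in Lemma~\ref{lemma:genericforincreasingsequence} — the lemma follows. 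I would also note the degenerate sub-cases: if there are no small nodes of $s^*$ above/with $\al$ in them, any $p^*(\al)\in J(\al)$ works; and if $J'(\al)\ne J(\al)$ we simply omit $\al$ from $\dom(p^*)$, which is why the ``Furthermore'' is stated under the hypothesis $J'(\al)=J(\al)$.
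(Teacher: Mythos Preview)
Your first paragraph is correct and matches the paper's argument exactly: apply Lemma~\ref{lemma:addtransitivenodes} to get $s^*$, then invoke Remark~\ref{remark:condition5initialsegment} to see that $\la s^*,p\ra$ is a condition. (The paper compresses your clause-by-clause check into a single citation of that remark, but the content is the same.)

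In the ``Furthermore'' part, however, there is a genuine gap. You enumerate \emph{all} small nodes $M\in s^*$ with $\al\in M$ and assert that they form an $\in$-chain $M_0\in\cdots\in M_{j-1}$, then run a recursion requiring $p_{i-1}\in M_i[\dot G_\al]$ at each step. But these nodes need not form an $\in$-chain: if a transitive node $W=H(\gamma)\in s^*$ lies strictly between two of them, say $N$ below $W$ and $M$ above $W$, then closure under intersections forces $M\cap W\in s^*$, and typically $N=M\cap W$; but as the paper itself notes (in the discussion following the definition of ``directly compatible''), one has $M\cap W\notin M$ in general. So $N\notin M$, your asserted chain $M_0\in\cdots\in M_{j-1}$ breaks, and the recursion cannot be carried out as you describe.

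The fix is exactly what the paper does: let $W$ be the \emph{least} transitive node of $s^*$ above $H(\al)$ (or $H(\theta)$ if none exists), and enumerate only the small nodes of $s^*$ in the interval between $H(\al)$ and $W$. These are consecutive in $s^*$ and hence genuinely $\in$-increasing, so a single application of Lemma~\ref{lemma:genericforincreasingsequence} (in the extension) produces a name $\dot u_\al$ generic for each of them. One then sets $p^*:=p\cup\{\al\mapsto\dot u_\al\}$ and invokes Lemma~\ref{lemma:genericforInterval} to conclude that condition~(5) holds for \emph{all} small nodes containing $\al$, not just those in the interval. Your Remark~\ref{remark:condition5initialsegment} observation is precisely what underlies Lemma~\ref{lemma:genericforInterval}, so you had the right tool in hand; you just applied it in the wrong place.
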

\begin{proof}
    Fix $\la s,p\ra$, and suppose that $H(\al)\notin s$. Note that the assumptions of Lemma \ref{lemma:addtransitivenodes} are satisfied in the present context. Let $s^*\leq s$ be an extension in the model sequence poset with $H(\al)\in s^*$ so that (2) of Lemma \ref{lemma:addtransitivenodes} is satisfied with respect to $s^*$, i.e., so that the new nodes in $s^*$ are either transitive or of the form $N\cap W$ with $N\in s$ and $W\in\cal{T}$. By Remark \ref{remark:condition5initialsegment}, $\la s^*,p\ra$ is a condition.

    Now suppose that, in addition, $J'(\al)=J(\al)$, but that $\al\notin\dom(p)$. Let $W$ be the least transitive node of $s^*$ above $H(\al)$ is there is one, and let $W$ equal $H(\theta)$ otherwise. Let $M_0\in M_1\in \dots\in M_{m-1}$ enumerate in $\in$-increasing order the interval of small nodes of $s^*$ between $H(\al)$ and $W$. By Lemma \ref{lemma:genericforincreasingsequence}, we can find a $(\ps\upharpoonright\al)$-name $\dot{u}_\al$ for a condition in $J(\al)$ which is forced to be $(M_i,J(\al))$-generic for each $i<m$.

    Define $p^*:=p\cup\lb \al\mapsto\dot{u}_\al\rb$. We claim that $\la s^*,p^*\ra$ is a condition. The only item that remains to be checked is condition (5) of Definition \ref{def:PFAiteration} at the point $\al$. However, by construction of the name $\dot{u}_\al$, we know that the assumptions of Lemma \ref{lemma:genericforInterval} are satisfied. Hence condition (5) of Definition \ref{def:PFAiteration} is outright satisfied. This completes the proof that $\la s^*,p^*\ra$ is a condition.
\end{proof}

In the following item, we state Lemma 6.12 from \cite{Neemanttm}, the key lemma on properness for small nodes. Recall the notation $\ps_\beta$ from Notation \ref{notation:restriction}.

\begin{lemma} Let $\be\in Z\cup\lb\theta\rb$. Fix a regular cardinal $\theta^*>\theta$ and a countable $M^*\prec H(\theta^*)$ containing $\theta$, $J$, $X$, and $\beta$ as elements. Let $M:=M^*\cap H(\theta)$, noting that $M\in\cal{S}$. Finally let $\la s,p\ra$ be a condition in $\ps_\be$ with $M\in s$. Then
\begin{enumerate}
    \item for each $D\in M^*$ which is dense in $\ps_\be$, there exists $\la t,q\ra\in D\cap M^*$ which is compatible with $\la s,p\ra$. Moreover, we can find $\la s^*,p^*\ra\in\ps_\be$ extending both $\la s,p\ra$ and $\la t,q\ra$ so that $\operatorname{res}_M(s^*)\bsl H(\be)\seq t$ and so that every small node of $s^*$ above $\be$ but outside of $M$ is either a node of $s$ or of the form $N'\cap W$ where $N'$ is a small node in $s$ and $W\in\cal{T}$.
    \item $\la s,p\ra$ is an $(M^*,\ps_\be)$-generic condition.
\end{enumerate}
\end{lemma}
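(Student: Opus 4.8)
The argument runs by induction on $\be\in Z\cup\{\theta\}$; the base case $\be=\min Z$ is just the properness of the pure model sequence poset for small nodes, and the limit case (when $\be$ is a limit point of $Z$) is handled by finite support, since any condition of $\ps_\be$ already lies in some $\ps_{\be'}$ with $\be'\in Z\cap\be$ and the restrictions $\ps_{\be'}$ are complete suborders of $\ps_\be$. I would first dispatch (2) from (1): given $D\in M^*$ dense in $\ps_\be$ and any $\la s^*,p^*\ra\leq\la s,p\ra$, the pair $\la s^*,p^*\ra$ is again a condition of $\ps_\be$ with $M\in s^*$ (as $s^*\supseteq s\ni M$), so applying (1) to $\la s^*,p^*\ra$ yields $\la t,q\ra\in D\cap M^*$ compatible with it; by the characterization of generic conditions (item (3) of the equivalences following Definition \ref{def:proper}), together with the observation that $\ps_\be\seq H(\theta)$ and hence $M^*\cap\ps_\be=M\cap\ps_\be$, this is exactly the assertion that $\la s,p\ra$ is $(M^*,\ps_\be)$-generic. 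So the content is (1).

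For (1), fix $D\in M^*$ dense in $\ps_\be$. The plan has three steps. \emph{Step (i): reduce the working part of $\la s,p\ra$ inside $M$.} Let $\bar s:=\res_M(s)$, which by the von Neumann rank lemma is the initial segment of $s$ consisting of nodes that are elements of $M$, so in particular $\bar s\in M$ (a finite subset of $M$). For each $\al\in\dom(p)\cap M$, I would use the inductive hypothesis at the appropriate level below $\be$ — which makes $\la s\cap H(\al),p\upharpoonright\al\ra$ a generic condition for the relevant initial segment of $M$ — together with condition (2c) of Definition \ref{def:PFAiteration} (so $J(\al)$ is proper for $M[\dot G_\al]$) to produce a $(\ps\cap H(\al)\cap M)$-name $\dot u_\al\in M$ that $\la s\cap H(\al),p\upharpoonright\al\ra$ forces to be a condition of $J(\al)$ which is a reduction of $p(\al)$ to $M[\dot G_\al]$ and which is generic for the small submodels on $\bar s$ having $\al$ as element. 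Setting $\bar p(\al):=\dot u_\al$, one checks (using Remark \ref{remark:condition5initialsegment} and Lemma \ref{lemma:genericforInterval}) that $\la\bar s,\bar p\ra$ is a condition of $\ps_\be$ lying in $M$ and that $\la s,p\ra\leq\la\bar s,\bar p\ra$. \emph{Step (ii): reflect.} Since $\la\bar s,\bar p\ra\in M\seq M^*$ and $D\in M^*$ is dense in $\ps_\be\in M^*$, elementarity of $M^*$ yields $\la t,q\ra\in D\cap M^*$ with $\la t,q\ra\leq\la\bar s,\bar p\ra$; in particular $t\in M$ and $t$ extends $\res_M(s)$ in the model sequence poset.

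\emph{Step (iii): amalgamate $\la s,p\ra$ and $\la t,q\ra$ over the small node $M$.} Apply the amalgamation lemma for small nodes (combining Lemmas 2.21 and 2.31 of \cite{Neemanttm}, as quoted above) with $Q=M$ to the condition $s$ and $t\in M$: the closure $s^*$ of $s\cup t$ under intersections is a condition, $\res_M(s^*)=t$, and every small node of $s^*$ outside $M$ has the form $N'$ or $N'\cap W$ with $N'\in s$ small and $W\in t$ transitive. This is exactly the "moreover" clause, and it gives $\res_M(s^*)\bsl H(\be)=t\bsl H(\be)\seq t$. It remains to define $p^*$ on $\dom(p)\cup\dom(q)$, which I would do by recursion on $\al$: having $p^*\upharpoonright\al$, choose $p^*(\al)$ to be a $(\ps\cap H(\al))$-name forced by $\la s^*\cap H(\al),p^*\upharpoonright\al\ra$ to be a condition of $J(\al)$ below both $p(\al)$ and $q(\al)$ wherever these are defined, and generic for every small node of $s^*$ having $\al$ as an element. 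Compatibility of $p(\al)$ and $q(\al)$ is forced because $q(\al)\in M[\dot G_\al]$ lies below the reduction $\bar p(\al)=\dot u_\al$ of $p(\al)$; the genericity is obtained from Lemma \ref{lemma:genericforincreasingsequence} applied to the $\in$-increasing sequence of small nodes of $s^*$ at level $\al$ — these lie in $\cal S$ by appropriateness (Definition \ref{def:appropriate}), $J(\al)$ is proper for the corresponding models by conditions (2b)/(2c) (valid since $\al\in\dom(p)\cup\dom(q)$), and the inductive hypothesis at level $\al$ guarantees those models are correctly computed. Lemma \ref{lemma:genericforInterval} reduces checking condition (5) of Definition \ref{def:PFAiteration} to the small nodes lying just above a transitive node, and Remark \ref{remark:condition5initialsegment} transfers genericity for a node $N'$ to genericity for the intersection nodes $N'\cap W$, which is what makes the newly created intersection nodes harmless.

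The main obstacle is Step (iii): one must verify condition (5) of Definition \ref{def:PFAiteration} for $\la s^*,p^*\ra$ at every level and every small node simultaneously with extending both $p(\al)$ and $q(\al)$, juggling (a) the intersection nodes $N'\cap W$ produced by closing $s\cup t$, (b) the requirement that $p^*(\al)$ be below $q(\al)$ for $\al\in\dom(q)\bsl\dom(p)$ even though $p$ says nothing there, and (c) the need in Step (i) for $\dot u_\al$ to be simultaneously a reduction of $p(\al)$ and generic for the submodels of $M$ on $\bar s$ — which is the place where the inductive hypothesis does genuine work. Everything else is Neeman's argument for $\pfa$ with the single extra requirement "$J(\al)$ preserves that $X$ has $\Phi$" (condition (2b)) carried passively along conditions; since that clause never interacts with the amalgamation, the proofs from \cite{Neemanttm} transfer essentially verbatim.
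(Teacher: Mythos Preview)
Your outline captures the right skeleton --- induction on $\beta$, (2) from (1), side-condition amalgamation over $M$ via Neeman's Lemma 2.21/Corollary 2.31 --- but Step~(i) contains a genuine gap that cannot be repaired as stated. You claim to produce, for each $\alpha\in\dom(p)\cap M$, a name $\dot u_\alpha\in M$ which is simultaneously (a) a \emph{reduction} of $p(\alpha)$ to $M[\dot G_\alpha]$, in the sense that every extension of $\dot u_\alpha$ inside $M[\dot G_\alpha]$ is compatible with $p(\alpha)$, and (b) above $p(\alpha)$, so that $\la s,p\ra\leq\la\bar s,\bar p\ra$. A condition $\dot u_\alpha$ with property~(a) exists precisely when $p(\alpha)$ is \emph{strongly} $(M[\dot G_\alpha],J(\alpha))$-generic; mere properness, which is all that clause~(2c) of Definition~\ref{def:PFAiteration} provides, does not yield such reductions. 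So either (a) fails and Step~(iii) collapses --- you cannot conclude that the arbitrary $q(\alpha)\leq\dot u_\alpha$ returned by elementarity is compatible with $p(\alpha)$ --- or (b) fails and $\la\bar s,\bar p\ra$ is not above $\la s,p\ra$, so Step~(ii) produces a $\la t,q\ra$ bearing no useful relation to $\la s,p\ra$. You are, in effect, assuming the iterands are strongly proper rather than proper.

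The correct argument (Neeman's Lemma~6.12, to which the paper defers) does not attempt to manufacture a global $\bar p\in M$ in advance. Instead the induction on $\beta$ peels off the top coordinate: one passes to the extension by $\ps\upharpoonright\alpha$ below the restricted condition (the inductive hypothesis guarantees this restriction is generic for $M^*$, so $M[\dot G_\alpha]$ is correctly computed), defines inside that extension a dense subset of $J(\alpha)$ encoding ``there is $\la t,q\ra\in D\cap M$ with $\la t,q\ra\upharpoonright\alpha\in\dot G_\alpha$ and the right tail,'' and then invokes the ordinary $(M[\dot G_\alpha],J(\alpha))$-genericity of $p(\alpha)$ from condition~(5) to meet that dense set inside $M[\dot G_\alpha]$ while remaining compatible with $p(\alpha)$. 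The ``moreover'' clause is exactly what makes this recursion close: it bounds the new small nodes above $\alpha$, so one knows which models the freshly chosen $p^*(\alpha)$ must be generic for, and Lemma~\ref{lemma:genericforInterval} then applies. Your limit case is also too brief --- a dense $D\subseteq\ps_\beta$ need not restrict to a dense subset of $\ps_{\beta'}$ --- but once the successor step is done correctly the limit step follows by choosing $\beta'\in M\cap Z\cap\beta$ above $\max(\dom(p)\cap M)$ and using the control on nodes above $\beta'$ that the ``moreover'' clause provides.
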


In the above lemma, (2) follows from applying (1) to all extensions of $\la s,p\ra$. The proof of (1) is by induction on $\be$. The ``moreover" part of (1) gives additional information on a condition witnessing compatibility, namely, that such a condition can be found which adds new nodes in a very controlled way, at least above $\beta$. Naturally enough, this is used in the inductive argument.

Later in the present work, we will need a more local version of this lemma. The proof is almost word-for-word identical to the proof of the above in Neeman's paper, and hence we will not reproduce the proof here.


\begin{lemma}\label{lemma:localproper}
    Let $\al\leq\be<\ga$ be ordinals in $Z\cup\lb\theta\rb$, where $\be$ has uncountable cofinality.  Fix a countable $M^*\prec H(\ga)$ so that $M^*$ contains $\al$, $\be$, $X$, and $J\upharpoonright H(\be)$ as elements. Let $M:=M^*\cap H(\be)$, noting that $M$ is in $\cal{S}$. Finally, let $\la s,p\ra$ be a condition in $\ps_\al\cap H(\be)$ with $M\in s$. Then
    \begin{enumerate}
        \item for each $D\in M^*$ which is dense in $\ps_\al\cap H(\beta)$, there exists $\la t,q\ra\in D\cap M^*$ which is compatible with $\la s,p\ra$. Moreover, we can find $\la s^*,p^*\ra\in\ps_\al\cap H(\beta)$ extending $\la s,p\ra$ and $\la t,q\ra$ so that $\operatorname{res}_M(s^*)\bsl H(\alpha)\seq t$ and so that every small node of $s^*$ above $\alpha$ but outside of $M$ is either a node of $s$ or of the form $N'\cap W$ where $N'$ is a small node in $s$ and $W\in\cal{T}$.
        \item $\la s,p\ra$ is an $(M^*,\ps_\al\cap H(\beta))$-generic condition.
    \end{enumerate}

\end{lemma}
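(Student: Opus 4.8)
The plan is to run the proof of the preceding lemma (Lemma 6.12 of \cite{Neemanttm}, stated just above this one) with only cosmetic changes: throughout, I would replace $H(\theta)$ by $H(\be)$ as the ambient model housing the side conditions, and $\be$ by $\al$ as the ceiling on the working part, and then check that the substitution is harmless. First, (2) follows from (1) by the usual argument: given any $\la s',p'\ra\leq\la s,p\ra$ in $\ps_\al\cap H(\be)$ and any dense $D\in M^*$, item (1) applied to $\la s',p'\ra$ yields some $\la t,q\ra\in D\cap M^*$ compatible with it, and since every extension of $\la s,p\ra$ has a compatible member of $D\cap M^*$, $\la s,p\ra$ forces $\dot{G}\cap D\cap M^*\neq\es$, which is exactly $(M^*,\ps_\al\cap H(\be))$-genericity. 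Here I would first record that $M:=M^*\cap H(\be)$ really is in $\cal S$: since $\be\in Z\cup\lb\theta\rb$ we have $(H(\be),\in,J\upharpoonright\be,\lb X\rb)\prec(H(\theta),\in,J,\lb X\rb)$, and since $\al,\be,X,J\upharpoonright H(\be)\in M^*$ the set $M^*\cap H(\be)$ is a countable elementary submodel of $(H(\be),\in,J\upharpoonright\be,\lb X\rb)$, hence of $(H(\theta),\in,J,\lb X\rb)$.

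For (1) I would induct on $\be\in Z\cup\lb\theta\rb$, the point being that the hypotheses are exactly what is needed to import Neeman's argument essentially verbatim. Because $\be\in Z\cup\lb\theta\rb$ and $\cf(\be)>\om$, the model $H(\be)$ is countably closed and, via the displayed elementarity, reflects every fact about $H(\theta)$ that the proof of Lemma 6.12 uses; because $\al,\be,X,J\upharpoonright H(\be)\in M^*$, the model $M$ --- like $M^*$ itself --- correctly computes $\ps_\al\cap H(\be)$ together with all the restrictions $\ps_{\al'}\cap H(\de)$, $\al'\leq\al$ and $\de<\be$ in $Z$, that arise in the recursion; and because $\dom(p)\seq Z^*\cap\al$ and $\ps_\al\cap H(\de)=\ps\cap H(\de)$ whenever $\de\leq\al$, truncating the working part at $\al$ only ever replaces instances of the full poset by literally equal, or emptier, objects. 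With these identifications the case analysis is Neeman's. If $s$ has a transitive node $W=H(\de)$ with $W\in M$, I would take the largest such $W$ (so $\de\in M\seq M^*$), split $s$ at $W$, and apply the inductive hypothesis with $\al'=\min(\al,\de)$, $\be'=\de$, $\ga'=\be$, and countable model $M^*\cap H(\be)=M$ (so the new residue model is $M\cap H(\de)=M^*\cap H(\de)\in\cal S$, which is a node of $s\cap H(\de)$), then use the transitive-node amalgamation lemma (Lemma~\ref{lemma:transitivestronglygeneric}(1)) to glue the part of the reflected witness lying above $W$ back onto $\la s,p\ra$. If instead no transitive node of $s$ lies in $M$, I would reflect into $M^*$ not $\la s,p\ra$ itself --- whose working part need not belong to $M^*$ --- but the genericity data attached to it: condition (5) of Definition~\ref{def:PFAiteration} guarantees that each $p(\ga')$ is forced $(M[\dot{G}_{\ga'}],J(\ga'))$-generic, and this is precisely enough to locate $\la t,q\ra\in D\cap M^*$ whose working part is amalgamable with $p$ below every point of $\dom(p)$; one then re-amalgamates using the small-node amalgamation lemma of Section~\ref{Section:NeemanSequence} together with Lemma~\ref{lemma:addM}. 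In both cases the ``moreover'' clause is read off from the amalgamation lemmas exactly as in \cite{Neemanttm}.

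The genuinely new points, and where I would concentrate, are (i) that each sub-instance produced by the recursion is again literally of the form in the statement, so the induction on $\be$ is legitimate, and (ii) that the reflection step goes through with $M=M^*\cap H(\be)$ in place of $M^*\cap H(\theta)$; both reduce to the elementarity of $H(\be)$ recorded above, which is why one can honestly claim the proof is word-for-word Neeman's. The fussiest bookkeeping --- the part I expect to be the main obstacle --- is the relativization of the working-part data and of condition (5) when the recursion descends below a transitive node $H(\de)$ with $\de<\al$, where one must verify that the forcing statements computed relative to $\ps_\al$ still hold relative to $\ps\cap H(\de)=\ps_\de\cap H(\de)$. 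Since this is handled in \cite{Neemanttm} in exactly the same way and requires no new idea, the net content of the argument is simply the verification that the relativization to $H(\be)$ is transparent.
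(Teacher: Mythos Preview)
Your proposal is correct and matches the paper's own treatment: the paper does not reproduce a proof but simply asserts that ``the proof is almost word-for-word identical to the proof of the above in Neeman's paper,'' which is precisely your plan of substituting $H(\be)$ for $H(\theta)$ and $\al$ for the working-part ceiling and rerunning Lemma~6.12. Your additional remarks on why the relativization is harmless and where the bookkeeping is fussiest go somewhat beyond what the paper records, but the underlying approach is the same.
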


The following lemma summarizes Corollary 6.13 of \cite{Neemanttm}:

\begin{lemma}\label{lemma:cardinalstructure}
    The poset $\bb{P}=\bb{P}_\Phi(X)$ is proper, and hence preserves $\om_1$. $\ps$ also preserves all cardinals $\lambda\geq\theta$ and collapses all cardinals between $\om_1$ and $\theta$ to have size $\om_1$. Hence $\ps$ forces that $\theta$ becomes $\aleph_2$.
\end{lemma}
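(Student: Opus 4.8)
The plan is to establish four facts and then combine them: (i) $\ps$ is proper, so it preserves $\om_1$; (ii) $|\ps|=\theta$, giving the $\theta^+$-chain condition and hence preservation of all cardinals and cofinalities $\geq\theta^+$; (iii) $\ps$ has the $\theta$-chain condition, so $\theta$ is preserved; (iv) $\ps$ collapses every $V$-cardinal in $(\om_1,\theta)$ to have size $\om_1$. Given these, $\theta$ is a cardinal of $V[G]$ all of whose predecessors are of $V[G]$-cardinality $\leq\aleph_1$, so $\theta=\aleph_2^{V[G]}$, and (ii)--(iii) give preservation of all cardinals $\geq\theta$.

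For (i) I would take a large enough regular $\theta^*$ and the club of countable $M^*\prec H(\theta^*)$ with $\theta,J,X\in M^*$ (so also $\ps\in M^*$, as $\ps$ is definable from $J$ and $X$). For such an $M^*$, the model $M:=M^*\cap H(\theta)$ is a countable elementary submodel of $(H(\theta),\in,J,\lb X\rb)$, hence $M\in\cal S$, and $\ps\cap M^*=\ps\cap M$ since $\ps\seq H(\theta)$. Given a condition $\la t,q\ra\in\ps\cap M$, Lemma~\ref{lemma:addM} yields an extension $\la s,p\ra\leq\la t,q\ra$ with $M\in s$, and then the lemma on properness for small nodes (Lemma~6.12 of \cite{Neemanttm}), applied with $\be=\theta$ so that $\ps_\theta=\ps$, shows $\la s,p\ra$ is $(M^*,\ps)$-generic; hence $\ps$ is proper and $\om_1$ is preserved. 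For (ii): since $\theta$ is supercompact it is inaccessible, so $H(\theta)=V_\theta$ and $|H(\theta)|=\theta$; every node in $\cal S\cup\cal T$ is a member of $H(\theta)$, a finite $\in$-increasing sequence of such is again in $H(\theta)$, and each working part $p$ lies in $H(\theta)$ because $p(\al)\in H(\vp(\al))$ with $\vp(\al)<\theta$. Thus $\ps\seq H(\theta)$ and $|\ps|=\theta$ (there are at least $\theta$-many conditions, one with side part $\la H(\al)\ra$ for each $\al\in Z^*$), so $\ps$ is $\theta^+$-c.c.

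For (iv): for each $x\in H(\theta)^V$ the set $D_x$ of conditions having a small node that contains $x$ is dense --- given $\la s,p\ra$, pick $M\in\cal S$ with $x,s,p\in M$ and apply Lemma~\ref{lemma:addM}. In $V[G]$, let $\cal M$ be the set of small nodes occurring in conditions of $G$. Any two elements of $\cal M$ lie on the side condition of a common extension in $G$, so $\cal M$ is linearly ordered by $\in$; each of its members is a countable set; and $\bigcup\cal M=H(\theta)^V$ by density of the $D_x$. For $M\in\cal S$ put $\de_M:=M\cap\om_1^V$, an ordinal (any countable ordinal in $M$ is a subset of $M$, as $M$ contains a surjection of $\om$ onto it); then $M\mapsto\de_M$ is strictly $\in$-increasing on $\cal M$, and $\lb\de_M:M\in\cal M\rb$ is cofinal in $\om_1^V$ by density of $D_\xi$ for $\xi<\om_1^V$. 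Hence the order type $\rho$ of $(\cal M,\in)$ satisfies $\rho\leq\om_1^V$ and $\cf^{V[G]}(\rho)=\cf^{V[G]}(\om_1^V)=\om_1^{V[G]}$, which forces $\rho=\om_1^{V[G]}$. So in $V[G]$ the set $H(\theta)^V$ is a union of $\om_1$-many countable sets; in particular, for every $V$-cardinal $\lam$ with $\om_1<\lam<\theta$ we have $\lam\seq H(\theta)^V$, so $|\lam|^{V[G]}\leq\aleph_1$, and since $\lam>\om_1^V=\om_1^{V[G]}$ this gives $|\lam|^{V[G]}=\aleph_1$.

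The main obstacle is (iii), the $\theta$-chain condition; this is Corollary~6.13 of \cite{Neemanttm}, and its proof transfers without change, since the only amalgamation facts it uses --- strong genericity of conditions through transitive nodes (Lemma~\ref{lemma:transitivestronglygeneric}) and the insertion of transitive nodes into a condition (Lemma~\ref{lemma:addtransitivenodesinPoset}) --- hold verbatim in the present setting, none of this being sensitive to $\Phi$. Concretely, one runs a $\De$-system argument: from $\theta$-many conditions, fix for each a transitive node $H(\de_\xi)\in\cal T$ having that condition as a member, using the regularity of $\theta$ and $|H(\ga)|<\theta$ for $\ga<\theta$ to thin to a family of size $\theta$ on which $\xi\mapsto\de_\xi$ is strictly increasing; then apply the $\De$-system lemma for finite sets to the ordinals coding the transitive nodes of the side conditions together with the domains of the working parts, and thin once more so that the restrictions of the conditions below a fixed ordinal bounding the root coincide; two of the surviving conditions can then be amalgamated by inserting into the one whose support lies higher a transitive node just above the root (Lemma~\ref{lemma:addtransitivenodesinPoset}) and merging across that node (Lemma~\ref{lemma:transitivestronglygeneric}(1)), contradicting incompatibility. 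As $\theta$ is regular, the $\theta$-c.c.\ preserves $\theta$, which with (i), (ii) and (iv) completes the argument.
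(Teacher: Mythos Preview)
The paper gives no argument of its own here---it simply cites Corollary~6.13 of \cite{Neemanttm}---and your parts (i), (ii), (iii) are correct and in line with what Neeman does. Part (iv), however, has a genuine gap. You assert that the set $\cal M$ of small nodes appearing in conditions of $G$ is linearly ordered by $\in$, on the grounds that any two lie on a common side condition. This reasoning would be valid for a one-type poset built only from countable models, but it fails in Neeman's two-type setting: an $\in$-increasing sequence only guarantees $M_i\in M_{i+1}$ for consecutive indices, and a transitive node between two small nodes can block the inference. Concretely, if $M,N\in\cal S$ and $W\in\cal T$ with $M\in W\in N$, closure under intersections forces $N\cap W$ into the condition; if $N\cap W\in M$ one obtains the legitimate condition $\la N\cap W,\,M,\,W,\,N\ra$, in which $M\notin N$ (the countable $N$ need not contain the particular countable set $M\in W$) and $N\notin M$ (by rank). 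So $\cal M$ is not $\in$-linearly ordered, and the map $M\mapsto\de_M$ is not injective on it either (here $\de_{N\cap W}=\de_N$). Worse, your conclusion that $H(\theta)^V$ is a union of $\om_1$-many countable sets in $V[G]$ would give $|H(\theta)^V|^{V[G]}\leq\aleph_1$, directly contradicting $\theta\subseteq H(\theta)^V$ together with your own part (iii). So the approach cannot be salvaged by merely tweaking the order on $\cal M$; one really has $|\cal M|^{V[G]}=\aleph_2$.

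The repair is to argue one cardinal at a time and to exploit closure under intersections. For a fixed $\lambda\in(\om_1,\theta)$, pick consecutive elements $\al<\be$ of $Z^*$ with $\lambda<\al$, and consider only those small nodes $M$ in conditions of $G$ that lie strictly between $H(\al)$ and $H(\be)$ in the $\in$-chain. Any two such $M,N$ in a common condition have only small nodes between them (a transitive node there would have height in $Z^*\cap(\al,\be)=\varnothing$), so the countable chain gives $M\in N$ or $N\in M$; hence this restricted family \emph{is} $\in$-linearly ordered, of order type at most $\om_1$, and a density argument (using Lemma~\ref{lemma:addM} together with Lemma~\ref{lemma:addtransitivenodesinPoset} to place $H(\al)$ and $H(\be)$ around the new small node) shows its union contains $\lambda$. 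This yields $|\lambda|^{V[G]}\leq\aleph_1$.
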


\section{Embedding Lemmas for the $\pfa_\Phi(X)$ Poset}\label{Section:EmbeddingLemmas}

In this section, we will define the side conditions augmentation of a given poset. We will use these ideas in our later inductive proof that $\ps_\Phi(X)$ preserves the given topological property $\Phi$ of $X$ (for $\Phi=$ ``countably tight" or ``Lindel{\"o}f"). Indeed, one of the subcases within the successor case of our later inductive argument will involve $\al\in Z^*$ and $\be$ the next element of $Z^*$ above $\al$; we will argue that $\ps\cap H(\be)$ is equivalent to $\ps\cap H(\al)$ followed by not just $J'(\al)$ itself, but by $J'(\al)$ augmented with certain countable models (recall $J'(\al)$ from Notation \ref{notation:weirdJ}). That is the main goal of this section.


\begin{definition}\label{def:sideaugment}
Let $\bb{A}$ be a poset, $\mu$ a large enough regular cardinal, and $\cal{M}$ a stationary set of countable elementary submodels of $H(\mu)$, so that for each $M\in\cal{M}$, $\bb{A}\in M$ and $\bb{A}$ is proper for $M$. The \emph{$\cal{M}$-augmentation of $\bb{A}$}, denoted $\cal{M}(\bb{A})$, is the poset consisting of pairs $\left<a,p \right>$ where $a\in{\bb A}$, $p$ is a finite $\in$-chain of elements of $\cal{M}$, and for each model $M\in p$, the condition $a$ is an $(M,{\bb A})$-generic condition.

We order conditions as follows: $\la a_1,p_1\ra$ extends $\la a_0,p_0\ra$, written $\la a_1,p_1\ra\leq\la a_0,p_0\ra$, if $a_1\leq_{\bb{A}}a_0$ and $p_0\seq p_1$.
\end{definition}

We will show in the next section that if $\bb{A}$ preserves that $X$ is Lindel{\"o}f (resp. countably tight) then $\cal{M}(\bb{A})$ also preserves that $X$ is Lindel{\"o}f (resp. countably tight). We will leverage this in the successor case of our later inductive argument that $\ps_\Phi(X)$ preserves $\Phi$ of $X$ for certain values of these parameters. In this section, we are concerned to show how the side conditions augmentation arises naturally in the study of $\ps_\Phi(X)$.

Recalling  the notation $J'$ from Notation \ref{notation:weirdJ},  our goal is to show that if $\al\in Z^*$ and $\be$ is the least element of $Z^*$ above $\al$, then there is a dense embedding from (a dense subset of) $\ps\upharpoonright\be$ to the two-step poset of $\ps\upharpoonright\al$ followed by a certain side conditions augmentation of $J'(\al)$. Note that in the case that $J'(\al)$ names the trivial poset, this side conditions augmentation will be (isomorphic to) forcing with finite $\in$-chains of countable models. We begin by defining the dense set which will be the domain of this embedding.

\begin{definition}
    Let $\al\in Z^*$, and let $\be$ be the least element of $Z^*$ above $\al$. Let $D_{\al,\be}$ consist of all conditions $\la s,p\ra\in \ps\upharpoonright\be$ so that $H(\al)\in s$ and so that if $J'(\al)=J(\al)$, then $\al\in \dom(p)$.
\end{definition}

\begin{lemma}
    $D_{\al,\be}$ is dense in $\ps\upharpoonright\be$.
\end{lemma}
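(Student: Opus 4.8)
The plan is to show that given an arbitrary condition $\la s, p\ra \in \ps\upharpoonright\be$, we can extend it into $D_{\al,\be}$. The proof splits into two steps: first adding the transitive node $H(\al)$ to the side conditions, and second (when relevant) adding $\al$ to the domain of the working part. Both steps are essentially immediate consequences of Lemma \ref{lemma:addtransitivenodesinPoset}, but there is one bookkeeping point to verify: that the resulting condition still lives in $\ps\upharpoonright\be = \ps\cap H(\be)$, i.e., that the new nodes we add are all members of $H(\be)$.

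First I would fix $\la s, p\ra \in \ps\upharpoonright\be$. Since $\be$ is the least element of $Z^*$ above $\al$ and $\al \in Z^*$, we have $\al < \be$, so $H(\al) \in H(\be)$; also $\vp(\al) < \min(Z\setminus(\al+1)) \le \be$, so $H(\vp(\al)) \subseteq H(\be)$ and any name/condition in $H(\vp(\al))$ lies in $H(\be)$. Now apply Lemma \ref{lemma:addtransitivenodesinPoset} to $\la s, p\ra$ and $\al \in Z^*$. This yields $s^* \le s$ with $H(\al) \in s^*$ such that $\la s^*, p\ra$ is a condition. By the ``furthermore'' clause of that lemma, the new nodes of $s^*$ are either transitive nodes $H(\ga)$ with $\ga \in Z^*$, or of the form $N \cap W$ with $N \in s$ and $W \in \cal{T}$; I need to check these lie in $H(\be)$. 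For this I would invoke the finer information in Lemma \ref{lemma:addtransitivenodes}(2) together with Lemma \ref{lemma:addTopTransitive}: the only genuinely ``new'' transitive node one must introduce to accommodate $H(\al)$ sits at an ordinal $\le$ some ordinal below $\be$ (since all of $s$ was already below $\be$, the relevant completion happens below $\be$), and the nodes of the form $N \cap W$ are intersections of things already in $H(\be)$, hence again in $H(\be)$. So $\la s^*, p\ra \in \ps\upharpoonright\be$.

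Second, if $J'(\al) = J(\al)$, then by the ``furthermore'' clause of Lemma \ref{lemma:addtransitivenodesinPoset} there is a finite partial function $p^*$ with $\al \in \dom(p^*)$ so that $\la s^*, p^*\ra$ is a condition extending $\la s^*, p\ra$ (and hence extending $\la s, p\ra$). The only new point in $\dom(p^*)$ is $\al$, and by condition (3) of Definition \ref{def:PFAiteration}, $p^*(\al) \in H(\vp(\al)) \subseteq H(\be)$; all other values of $p^*$ agree with $p$, which was already a condition in $\ps\upharpoonright\be$. Therefore $\la s^*, p^*\ra \in \ps\upharpoonright\be$, it lies in $D_{\al,\be}$, and it extends $\la s, p\ra$. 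If instead $J'(\al)$ names the trivial poset, then $D_{\al,\be}$ only demands $H(\al) \in s$, so $\la s^*, p\ra$ already works.

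The main obstacle — really the only thing requiring care — is the verification that the node-adding process of Lemma \ref{lemma:addtransitivenodesinPoset} does not push us outside $H(\be)$. Everything else is a direct appeal to that lemma. I expect this to be handled cleanly by noting that the only transitive node one is forced to add (beyond $H(\al)$ itself) when completing $s \cup \{H(\al)\}$ to a condition is bounded below $\be$, because $s \subseteq H(\be)$ and $\al < \be$, so all the relevant ``gaps'' that must be filled by transitive nodes occur at ordinals strictly below $\be$; and the models of the form $N \cap W$ are absolute between $H(\be)$ and $H(\theta)$ since $N, W \in H(\be)$.
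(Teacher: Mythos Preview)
Your approach is the same as the paper's: the paper's entire proof is the single sentence ``By Lemma~\ref{lemma:addtransitivenodesinPoset},'' and you invoke exactly that lemma. The extra bookkeeping you supply---that the extension produced by Lemma~\ref{lemma:addtransitivenodesinPoset} stays inside $H(\be)$---is a legitimate point the paper leaves implicit; your justification is on the right track and can be made precise by noting that any new transitive node $H(\gamma)$ arises (via Lemma~\ref{lemma:addTopTransitive}(3)) with $\gamma$ an element of some small $M\in s\subseteq H(\be)$, hence $\gamma<\be$, and nodes of the form $N\cap W$ are then intersections of sets in $H(\be)$.
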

\begin{proof}
    By Lemma \ref{lemma:addtransitivenodesinPoset}.
\end{proof}

Now we define the models that are appropriate for the relevant side conditions augmentation.

\begin{definition}\label{def:modelsforSCAug}
    Let $\al$ and $\be$ be as before. Let $\dot{\cal{M}}_\al^0$ be the $(\ps\upharpoonright\al)$-name defined as follows: given an arbitrary $V$-generic filter $G_\al$ over $\ps\upharpoonright\al$, let
    $$
    \dot{\cal{M}}_\al^0[G_\al]:=\lb M\in H(\be)\cap\cal{S}:(\exists\la s,p\ra\in D_{\al,\be})\;\left[\la s,p\ra\upharpoonright\al\in G_\al\we M\in s\we\al\in M\right]\rb.
    $$
    Also let $\dot{\cal{M}}_\al$ be the name defined by
    $$
    \dot{\cal{M}}_\al[G_\al]:=\lb M[G_\al]:M\in\dot{\cal{M}}^0_\al[G_\al]\rb.
    $$
\end{definition}

In the previous definition, we are making the distinction between $\cal{M}_\al^0$ and $\cal{M}_\al$ in order to facilitate a later proof that the map taking $M\in\cal{M}^0_\al$ to $M[G_\al]\in\cal{M}_\al$ is an $\in$-isomorphism.

We'll now work to establish that $\ps\upharpoonright\be$ is forcing equivalent to $\ps\upharpoonright\al$ followed by the side conditions augmentation of $J'(\al)$ by $\dot{\cal{M}}_\al$. Recalling Definition \ref{def:sideaugment}, we first establish the stationarity of $\dot{\cal{M}}_\alpha$:

\begin{lemma}
    $\ps\upharpoonright\al$ forces that $H^{V[\dot{G}_\al]}(\be)=H(\be)[\dot{G}_\al]$ and that $\dot{\cal{M}}_\al$ is stationary in $H^{V[\dot{G}_\al]}(\be)$.
\end{lemma}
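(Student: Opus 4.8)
The plan is to prove the two assertions separately, though they are linked. First I would establish the identity $H^{V[\dot{G}_\al]}(\be)=H(\be)[\dot{G}_\al]$. The inclusion $H(\be)[G_\al]\seq H^{V[G_\al]}(\be)$ is routine: since $\be\in Z^*$ is above $\al$, we have $\ps\upharpoonright\al=\ps\cap H(\al)\in H(\be)$ and $\be$ has uncountable cofinality (so $H(\be)$ is countably closed in $V$), hence every element of $H(\be)[G_\al]$ is (the evaluation of) a $(\ps\upharpoonright\al)$-name lying in $H(\be)$, and such names have hereditary size $<\be$ in $V[G_\al]$ as well, because $\ps\upharpoonright\al$ has size $<\be$ and being a cardinal is absolute. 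For the reverse inclusion $H^{V[G_\al]}(\be)\seq H(\be)[G_\al]$, the standard argument is that every set of hereditary size $<\be$ in $V[G_\al]$ has a nice name of hereditary size $<\be$ in $V$ (here we use that $\ps\upharpoonright\al$ is small relative to $\be$, the $\gch$ in $V$, and elementarity of $(H(\be),\in,J\res\be,\lb X\rb)$ in $(H(\theta),\in,J,\lb X\rb)$, which gives $\ps\cap H(\be)\seq H(\be)$ and hence enough closure of $H(\be)$ under the relevant name-constructions). I would cite this as essentially the content of the analogous passage in Section 6 of \cite{Neemanttm}.

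Next, for stationarity of $\dot{\cal{M}}_\al$ in $H^{V[\dot{G}_\al]}(\be)=H(\be)[\dot{G}_\al]$, I would work in $V$ and fix an arbitrary condition $\la s_0,p_0\ra\in\ps\upharpoonright\al$ together with a $(\ps\upharpoonright\al)$-name $\dot{F}$ forced to be a function from $[H(\be)[\dot{G}_\al]]^{<\om}$ to $H(\be)[\dot{G}_\al]$; the goal is to find an extension forcing that $\dot{\cal{M}}_\al$ contains a model closed under $\dot{F}$. The key step is to pass to an auxiliary structure: fix a regular $\ga$ large enough that $\ps\upharpoonright\al$, $\dot{F}$, $J$, $X$, $\be$ are all in $H(\ga)$, and choose a countable $M^*\prec H(\ga)$ containing all of these as elements, together with $\la s_0,p_0\ra$. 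Set $M:=M^*\cap H(\be)$; by the setup for $\cal{S}$ (countable elementary submodels of $(H(\theta),\in,J,\lb X\rb)$) and elementarity, $M\in\cal{S}$, and $M\cap H(\al)=M^*\cap H(\al)\in\cal{S}$ as well. Then I would extend $\la s_0,p_0\ra$ inside $M$ (or rather, find a condition in $\ps\upharpoonright\al$ below it) so that $M\cap H(\al)$ is a small node on the working sequence — this is possible by Lemma \ref{lemma:addM} — and by Lemma \ref{lemma:localproper} (applied with the triple $0\le\al<\be$, the model $M^*$, and $M\cap H(\be)$... more precisely applied at the appropriate ordinals), any such condition is $(M^*,\ps\upharpoonright\al)$-generic. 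Genericity forces $M^*[\dot{G}_\al]\cap V=M^*$, hence $M^*[\dot{G}_\al]\cap H(\be)[\dot{G}_\al]=M[\dot{G}_\al]$ and this model is closed under $\dot{F}$ (since $\dot{F}\in M^*$, by elementarity of $M^*[\dot{G}_\al]$ in $H(\ga)[\dot{G}_\al]$). It remains to check $M[\dot{G}_\al]\in\dot{\cal{M}}_\al[\dot{G}_\al]$, i.e. that $M\in\dot{\cal{M}}^0_\al[\dot{G}_\al]$: for this I would, below the generic condition, further extend to a condition $\la s,p\ra\in D_{\al,\be}$ with $M\in s$, $\al\in M$, and $\la s,p\ra\res\al$ in $G_\al$ — first add $M$ to the working part using Lemma \ref{lemma:addM} again (noting $M\in\cal{S}$ and $\al\in M$ since $\al\in M^*$), then add $H(\al)$ and, if $J'(\al)=J(\al)$, a value at $\al$, via Lemma \ref{lemma:addtransitivenodesinPoset} to land in $D_{\al,\be}$. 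This witnesses membership in the name, and since $\la s_0,p_0\ra$ was arbitrary, a density argument gives stationarity of $\dot{\cal{M}}_\al$ as forced by $\ps\upharpoonright\al$.

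The main obstacle I anticipate is the bookkeeping in the last step: arranging that all the models we need ($M\cap H(\al)$ on the working sequence to invoke the local properness lemma, then $M$ itself on the working sequence, then $H(\al)$, then possibly a value at $\al$) can be added compatibly and that the final condition genuinely restricts into $G_\al$ — one has to be careful that adding $H(\al)$ and a working value at $\al$ does not disturb the part below $\al$ (it does not, since those steps only add nodes of rank $\ge\al$, by the "furthermore" clauses of Lemmas \ref{lemma:addM} and \ref{lemma:addtransitivenodesinPoset}), and that the resulting condition still has $M\cap H(\al)$ generic so that the genericity conclusion $M^*[\dot G_\al]\cap V = M^*$ is in force in the generic extension by $G_\al$ containing $\la s,p\ra\res\al$. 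A secondary subtlety is matching the hypotheses of Lemma \ref{lemma:localproper} exactly (which ordinals play the roles of $\al\le\be<\ga$ there, and that $M^*$ contains $J\res H(\be)$); I would state this invocation carefully but not belabor it, since it is exactly parallel to Neeman's proof of properness for small nodes.
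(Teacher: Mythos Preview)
Your proposal is correct and follows the same overall strategy as the paper: pick a countable elementary $M^*$ containing the starting condition and $\dot F$, set $M:=M^*\cap H(\be)\in\cal S$, extend so that $M$ appears in the side-conditions part, and argue that the resulting condition forces $M[\dot G_\al]\in\dot{\cal M}_\al$ and that $M[\dot G_\al]$ is closed under $\dot F$.

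The differences are in the execution. The paper works directly in $\ps\upharpoonright\be$: it takes $M^*\prec H(\theta)$ (rather than $H(\ga)$ for some larger $\ga$), applies Lemma~\ref{lemma:addM} once to the starting condition $\la\bar s,\bar p\ra\in\ps\upharpoonright\al\seq\ps\upharpoonright\be$ to obtain $\la s',p'\ra\in\ps\upharpoonright\be$ with $M\in s'$, and then $\la s',p'\ra\upharpoonright\al$ is the desired extension while $\la s',p'\ra$ itself witnesses $M\in\dot{\cal M}^0_\al$. This sidesteps all of your bookkeeping about first adding $M\cap H(\al)$ in $\ps\upharpoonright\al$, then separately adding $M$, then $H(\al)$, and then a value at $\al$. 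The paper also does not invoke Lemma~\ref{lemma:localproper} or any explicit genericity statement; it simply uses that $M[G_\al]\prec H(\be)[G_\al]$ and that $F\in M[G_\al]$ to conclude closure under $F$. Your detour through genericity to get $M^*[\dot G_\al]\cap V=M^*$ is not wrong---and is arguably more careful about why $M[G_\al]$, as opposed to $M^*[G_\al]$, is closed under $F$---but the paper's one-step route through Lemma~\ref{lemma:addM} in $\ps\upharpoonright\be$ is what was intended and avoids the obstacle you anticipated.
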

\begin{proof}
    The equality claimed in the lemma follows from the fact that $\ps\upharpoonright\al$ has the $\al^+$ chain condition; indeed it has size $\al$ on account of assuming the $\gch$ (see Assumption \ref{GroundModelassumption}).

    For the second item in the statement of the lemma, fix a name $\dot{F}$ for a function from finite subsets of $H^{V[\dot{G}_\al]}(\be)$ to $H^{V[\dot{G}_\al]}(\be)$, and fix a condition $\la \bar{s},\bar{p}\ra$ in $\ps\upharpoonright\al$. We find a model $M$ and an extension of $\la\bar{s},\bar{p}\ra$ which forces that $M[\dot{G}_\al]$ is in $\dot{\cal{M}}_\al$ and is closed under $\dot{F}$.

    Indeed, let $M^*\prec H(\theta)$ be countable with all of the parameters of interest, including $\dot{F}$, $\al$, and $\la\bar{s},\bar{p}\ra$. Let $M:=M^*\cap H(\be)$, so that $M\in\cal{S}$. Apply Lemma \ref{lemma:addM} to fix a condition $\la s',p'\ra$ in $\ps\upharpoonright\be$ extending $\la \bar{s},\bar{p}\ra$ so that $M\in s'$. Then $\la s',p'\ra\upharpoonright\al$ forces that $M[\dot{G}_\al]$ is in $\dot{\cal{M}}_\al$ (with $\la s',p'\ra$ providing the witness). Finally, if $G_\al$ is a $V$-generic filter over $\ps\upharpoonright\al$, then $M[G_\al]$ is elementary in $H(\be)[G_\al]=H^{V[G_\al]}(\be)$, and since $F:=\dot{F}[G_\al]$ is in $M[G_\al]$, $M[G_\al]$ is closed under $F$.
\end{proof}

The following lemma uses standard facts about properness:

\begin{lemma}\label{lemma:membershipbijection}
    $\ps\upharpoonright\al$ forces that the map taking $M\in\dot{\cal{M}}_\al^0$ to $M[\dot{G}_\al]\in\dot{\cal{M}}_\al$ is an $\in$-isomorphism.
\end{lemma}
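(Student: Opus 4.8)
The plan is to show that the map $M\mapsto M[\dot{G}_\al]$ on $\dot{\cal{M}}_\al^0$ is injective and that it preserves and reflects the $\in$-relation; surjectivity onto $\dot{\cal{M}}_\al$ is immediate from the definition of $\dot{\cal{M}}_\al$. Work in $V[G_\al]$ for a $V$-generic $G_\al$ over $\ps\upharpoonright\al$. First I would record the one fact that does all the work: for each $M\in\dot{\cal{M}}_\al^0[G_\al]$, there is a witnessing condition $\la s,p\ra\in D_{\al,\be}$ with $\la s,p\ra\upharpoonright\al\in G_\al$, $M\in s$, and $\al\in M$; in particular, since $\al\in M$ and $H(\al)\in s$, condition (5) of Definition \ref{def:PFAiteration} guarantees (after restricting to the relevant initial segment) that $\la s,p\ra\upharpoonright\al$ forces $p(\al)$ to be $(M[\dot{G}_\al],J(\al))$-generic, and more to the point, the general theory of the $\pfa_\Phi(X)$-poset (Lemma \ref{lemma:transitivestronglygeneric} and the properness results of Section \ref{Section:Poset}) yields that $\la s,p\ra$ forces $M[\dot{G}_\al]$ to be an elementary submodel of $H(\be)[\dot{G}_\al]$ with $M[\dot{G}_\al]\cap V = M$. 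This last equality, which is exactly the standard consequence of $(M,\ps\upharpoonright\al)$-genericity listed in the equivalences after Definition \ref{def:proper}, is the key: it lets us recover $M$ from $M[G_\al]$.

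Given that, injectivity is immediate: if $M[G_\al] = N[G_\al]$ for $M,N\in\dot{\cal{M}}_\al^0[G_\al]$, then $M = M[G_\al]\cap V = N[G_\al]\cap V = N$. For preservation of $\in$: if $M, N\in\dot{\cal{M}}_\al^0[G_\al]$ and $M\in N$, then since $M$ is countable and $M, G_\al\in N[G_\al]$ (the latter because $N\prec H(\theta)$ sees the name $\dot{G}_\al$ and all the parameters, so $G_\al\cap N$ determines $M[G_\al]$ inside $N[G_\al]$), we get $M[G_\al]\in N[G_\al]$ by elementarity. Conversely, if $M[G_\al]\in N[G_\al]$, then $M[G_\al]$ is a countable element of $N[G_\al]$, hence (again by elementarity, since $N[G_\al]\models$ ``$M[G_\al]$ is countable'') $M[G_\al]\subseteq N[G_\al]$ as well, and intersecting with $V$ gives $M\subseteq N$. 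To upgrade $M\subseteq N$ to $M\in N$ one uses that both lie in $\cal{S}$ and that $M[G_\al]\in N[G_\al]$ witnesses, via $M = M[G_\al]\cap V$ computed inside $N[G_\al]$ (note $N[G_\al]$ computes $V$-parts of its elements correctly for countable models, using that $N\cap\dot G_\al$ is $V$-generic over $\ps\cap N$ or simply applying the elementarity of $N$ in $H(\theta)$ to the formula defining $\dot{\cal{M}}_\al^0$), that $M\in N$; alternatively invoke Definition \ref{def:appropriate} together with the fact that the two models are on a common condition extending the witnesses, so their $\in$-comparability in $H(\theta)$ is inherited.

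The cleanest route for the last point is to amalgamate the two witnessing conditions. Given $M, N\in\dot{\cal{M}}_\al^0[G_\al]$ with witnesses $\la s_M,p_M\ra$ and $\la s_N, p_N\ra$ whose $\al$-restrictions lie in $G_\al$, a density argument in $\ps\upharpoonright\al$ below $G_\al$ produces a single condition $\la s,p\ra\in D_{\al,\be}$ with $\la s,p\ra\upharpoonright\al\in G_\al$ and both $M$ and $N$ on $s$ (using Lemma \ref{lemma:addM} and the amalgamation lemmas of Section \ref{Section:NeemanSequence} to merge the two side-condition parts); since $s$ is an $\in$-increasing sequence, $M$ and $N$ are $\in$-comparable in $V$, and the rank lemma plus countability shows $M\in N \iff M[G_\al]\in N[G_\al]$ directly. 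The main obstacle I anticipate is bookkeeping the ``$M[G_\al]\cap V = M$'' fact rigorously in this two-type side-conditions setting: one must make sure that $(M,\ps\upharpoonright\al)$-genericity really holds for the relevant $M\in\cal{S}$ that arises as $M^*\cap H(\be)\cap H(\al)$-style traces, which is exactly what Lemma \ref{lemma:localproper} (with the appropriate choice of ordinals) and Remark \ref{remark:condition5initialsegment} are set up to deliver; once that is cited correctly, the rest is the routine elementarity argument sketched above.
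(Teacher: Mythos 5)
Most of your outline coincides with the paper's proof: surjectivity is immediate from the definition, everything hinges on $M[\dot G_\al]\cap V=M$, which is obtained exactly as you say from the witnessing conditions together with Lemma \ref{lemma:localproper}, injectivity then falls out at once, and preservation of $\in$ from left to right is the same observation that $N[G_\al]$ contains both $M$ and $G_\al$. The genuine gap is in the reflection direction, $M[G_\al]\in N[G_\al]\Rightarrow M\in N$. Your first route correctly reaches $M\seq N$, but the ``upgrade'' to $M\in N$ is asserted rather than proved: the claim that $N[G_\al]$ ``computes $V$-parts of its elements correctly'' is precisely the nontrivial point, and neither the genericity of $N\cap G_\al$ over $\ps\cap N$ nor elementarity of $N$ in $H(\theta)$ delivers it (and $M\seq N$ alone never implies $M\in N$ for countable elementary submodels). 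Your fallback ``cleanest route'' is worse than vague --- it is false: membership of the two restrictions in $G_\al$ constrains only the parts of the witnesses below $H(\al)$, so no density argument can place two arbitrary $M,N\in\dot{\cal M}^0_\al[G_\al]$ on a single $\in$-increasing side-condition sequence. If it could, any two members of $\dot{\cal M}^0_\al[G_\al]$ would be $\in$-comparable, making $\dot{\cal M}_\al[G_\al]$ an $\in$-chain, which contradicts the preceding lemma that $\dot{\cal M}_\al$ is forced to be stationary in $H^{V[\dot G_\al]}(\be)$.

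The paper closes this step with a concrete trick you should adopt: since $M[G_\al]\cap V=M$ and likewise for $N$, set $\ga:=\sup(M[G_\al]\cap\operatorname{On})$. Because $M[G_\al]\cap\operatorname{On}=M\cap\operatorname{On}$, $\ga$ is a ground-model ordinal, and $\ga\in N[G_\al]$ (being definable from $M[G_\al]\in N[G_\al]$), so $\ga\in N[G_\al]\cap V=N$ and hence $H^V(\ga)\in N$. Since $M$ is closed under $x\mapsto|\operatorname{trcl}(x)|$, we get $M\seq H^V(\ga)$, and therefore
$$
M=M[G_\al]\cap H^V(\ga),
$$
an intersection of two elements of $N[G_\al]$. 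Thus $M\in N[G_\al]\cap V=N$, which is exactly the missing step in your argument.
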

\begin{proof}
    Fix a $V$-generic filter $G_\al$ over $\ps\upharpoonright\alpha$. The map taking $M$ to $M[G_\al]$ is a surjection by definition of $\cal{M}_\al$.

    Next, fix $M_0\in M_1$, both in $\cal{M}^0_\al$, and note that $\bb{P}\upharpoonright\al$ is a member of both models. Thus $M_1[G_\al]$, as it contains $G_\al$ and $M_0$, contains $M_0[G_\al]$ as an element. Therefore the map preserves membership from the left to the right.

    We now show that this map is an injection, a part of the argument that uses properness. Fix $M$ and $N$ in $\cal{M}^0_\al$, and suppose that $M\neq N$. Apply the definition of $\cal{M}^0_\al$ and Lemma \ref{lemma:localproper} (with the present $\al$ playing the twin roles of the ``$\al$" and ``$\be$" in the statement of that lemma, and with the present $\be$ playing the role of the ``$\ga$" in the statement of that lemma) to conclude that $G_\al$ has generic conditions for $M$ and for $N$ in the poset $\ps\upharpoonright\alpha$. Consequently, $M[G_\al]\cap V=M$ and $N[G_\al]\cap V=N$. Since $M[G_\al]$ and $N[G_\al]$ have distinct traces on $V$, they are distinct models. This completes the proof that the stated map is an injection.
    

    Now suppose that we have two models $M_0$ and $M_1$ in $\cal{M}^0_\al$ and that $M_0[G_\al]\in M_1[G_\al]$; we need to show that $M_0\in M_1$. Because $M_1[G_\al]\cap V=M_1$ (applying properness as in the previous paragraph), it suffices to argue that $M_0\in M_1[G_\al]$. Towards this end, let $\ga:=\sup(M_0[G_\al]\cap\text{On})$, noting that $\ga\in M_1[G_\al]$. Since $\ga\in M_1[G_\al]\cap\text{On}$, $\ga\in M_1$. Hence $H^V(\ga)\in M_1$.
    
    Next, note that $M_0[G_\al]\cap\text{On}=M_0\cap\text{On}$. Thus $M_0\seq H^V(\ga)$, since $M_0$ is closed under the map taking $x$ to the cardinality of the transitive closure of $x$. Since $H^V(\ga)$ and $M_0[G_\al]$ are both in $M_1[G_\al]$, so is
    $$
    M_0[G_\al]\cap H^V(\ga)=M_0\cap H^V(\ga)=M_0,
    $$
    completing the proof.
\end{proof}

The following notation will be helpful for the next proposition.

\begin{notation}
    Given $\la s,p\ra\in D_{\al,\be}$ we let $s\bsl\al$ denote $\lb M\in s:\al\in M\rb$. Note that $H(\al)\in s$ by definition of $D_{\al,\be}$ and that each node in $s$ above $H(\al)$ is a small node, since $\be$ is the next element of $Z^*$ above $\al$. 
    
    We next let $(s\bsl\al)[\dot{G}_\al]$ be the name for $\lb M[\dot{G}_\al]:M\in s\bsl\al\rb$.
\end{notation}

The next proposition is the main result of this section, specifying the desired dense embedding; on account of $J'(\al)$ being a nuisance, we will need to introduce yet more notation in the statement of the proposition.

\begin{proposition}\label{prop:denseembedding}
    Let $\la s,p\ra\in D_{\al,\be}$ be a condition. Define $p'(\al)$ to be $p(\al)$ if $J'(\al)=J(\al)$, and otherwise set $p'(\al)$ to be the trivial condition in the trivial poset.
    
    Let $\Gamma$ be the map taking $\la s,p\ra$ in $D_{\al,\be}$ to 
    $$
    \left(\la s,p\ra\upharpoonright\al,\la p'(\al),(s\bsl\al)[\dot{G}_\al]\ra\right).
    $$ 
    Then $\Gamma$ is a dense embedding from $D_{\al,\be}$ to $(\ps\upharpoonright\al)\ast(\dot{\cal{M}}_\al(J'(\al)))$.
\end{proposition}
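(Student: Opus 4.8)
The plan is to verify the three standard ingredients of a dense embedding: that $\Gamma$ is well defined, that it is order preserving and order reflecting, and that its image is dense. The first two are bookkeeping, the third is the heart of the matter. For well-definedness, fix $\la s,p\ra\in D_{\al,\be}$. Then $\la s,p\ra\upharpoonright\al$ is a condition of $\ps\upharpoonright\al$, being the restriction of $\la s,p\ra$ to $H(\al)$ (recall $H(\al)\in s$ and that every node of $s$ above $H(\al)$ is a small node containing $\al$, i.e.\ a node of $s\bsl\al$). It then remains to check that $\la s,p\ra\upharpoonright\al$ forces $\la p'(\al),(s\bsl\al)[\dot{G}_\al]\ra$ to be a condition of $\dot{\cal M}_\al(J'(\al))$. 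If $J'(\al)=J(\al)$ then $\al\in\dom(p)$, so condition~(4) of Definition~\ref{def:PFAiteration} gives that $p'(\al)=p(\al)$ is forced into $J(\al)$ and condition~(5) of Definition~\ref{def:PFAiteration} gives that $p(\al)$ is forced to be $(M[\dot{G}_\al],J(\al))$-generic for each small node $M\in s$ with $\al\in M$, that is, for each $M\in s\bsl\al$; if $J'(\al)$ is the trivial poset these clauses are trivial. Finally, for each $M\in s\bsl\al$ the condition $\la s,p\ra$ itself witnesses that $M\in\dot{\cal M}_\al^0$ below $\la s,p\ra\upharpoonright\al$, and Lemma~\ref{lemma:membershipbijection} turns the $\in$-increasing sequence $s\bsl\al$ into a finite $\in$-chain of members of $\dot{\cal M}_\al$, as required.

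For the ordering, suppose $\la s^*,p^*\ra\leq\la s,p\ra$ in $D_{\al,\be}$; then $s^*\supseteq s$ (so $s\bsl\al\subseteq s^*\bsl\al$ and $s^*\cap H(\al)\supseteq s\cap H(\al)$), $\dom(p)\subseteq\dom(p^*)$, and $p^*(\al)$ is forced by $\la s^*,p^*\ra\upharpoonright\al$ to refine $p(\al)$ in $J(\al)$ when $\al\in\dom(p)$. Matching this against the definitions of the orderings of $\ps$, of a two-step iteration, and of $\dot{\cal M}_\al(J'(\al))$ (Definition~\ref{def:sideaugment}), one reads off that $\Gamma(\la s^*,p^*\ra)\leq\Gamma(\la s,p\ra)$. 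Conversely, if $\Gamma(\la s^*,p^*\ra)\leq\Gamma(\la s,p\ra)$, then directly $\la s^*,p^*\ra\upharpoonright\al\leq\la s,p\ra\upharpoonright\al$, and below this condition $\la s,p\ra\upharpoonright\al$ lies in the generic, so both $s\bsl\al$ and $s^*\bsl\al$ are forced into $\dot{\cal M}_\al^0$; the injectivity half of Lemma~\ref{lemma:membershipbijection} then upgrades $(s\bsl\al)[\dot{G}_\al]\subseteq(s^*\bsl\al)[\dot{G}_\al]$ to $s\bsl\al\subseteq s^*\bsl\al$, and together with $H(\al)\in s^*$ and the equivalence $\al\in\dom(p)\Leftrightarrow J'(\al)=J(\al)\Leftrightarrow\al\in\dom(p^*)$ (valid for conditions of $D_{\al,\be}$ by the definition of $J'$) this gives $\la s^*,p^*\ra\leq\la s,p\ra$. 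Thus $\Gamma$ is an order isomorphism onto its image, and once that image is shown to be dense, preservation of incompatibility is automatic: a common extension of $\Gamma(q_0)$ and $\Gamma(q_1)$ lies above some $\Gamma(q)$ with $q\in D_{\al,\be}$, whence $q\leq q_0,q_1$.

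The substantive part is showing the image is dense. Given a condition $(\la\bar t,\bar q\ra,\dot\rho)$ of $(\ps\upharpoonright\al)\ast(\dot{\cal M}_\al(J'(\al)))$, I would fix a large regular $\theta^*$ and a countable $M^*\prec H(\theta^*)$ with $\al,\be,X,J,\dot\rho,\la\bar t,\bar q\ra\in M^*$, and set $M:=M^*\cap H(\be)\in\cal S$; taking $\dot\rho$ to be a nice $\ps\upharpoonright\al$-name lying in $H(\be)$, both $\dot\rho$ and $\la\bar t,\bar q\ra$ are then members of $M$. Applying Lemma~\ref{lemma:addM} to adjoin $M$ to $\la\bar t,\bar q\ra$, and then Lemma~\ref{lemma:addtransitivenodesinPoset} to adjoin $H(\al)$ (and $\al$ to the domain of the working part if $J'(\al)=J(\al)$), produces a condition in $D_{\al,\be}$ below $\la\bar t,\bar q\ra$ in $\ps\upharpoonright\be$ whose only node above $H(\al)$ is $M$. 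One then refines this condition within $D_{\al,\be}$: since $\dot\rho\in M$, the name $\dot\rho$ is ``captured'' by $M[\dot{G}_\al]$, and by Lemma~\ref{lemma:membershipbijection} the model chain named by $\dot\rho$ corresponds to an $\in$-chain of models that are internal to $M$ and lie in $\dot{\cal M}_\al^0$; one inserts each of these models between $H(\al)$ and $M$ using its $\dot{\cal M}_\al^0$-witness together with the amalgamation results of Section~\ref{Section:NeemanSequence} anchored at the transitive node $H(\al)$, and one resets the working part at $\al$ to a version of the $J'(\al)$-component of $\dot\rho$ (patched to remain a condition of $J(\al)$ everywhere). The resulting $\la s,p\ra\in D_{\al,\be}$ has $s\cap H(\al)$ below $\la\bar t,\bar q\ra$ and $s\bsl\al$ realizing the model chain of $\dot\rho$, so $\Gamma(\la s,p\ra)\leq(\la\bar t,\bar q\ra,\dot\rho)$.

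The main obstacle is precisely this last refinement: one must simultaneously keep the ``below $H(\al)$'' part of the new condition extending $\la\bar t,\bar q\ra$ while arranging that its small nodes above $H(\al)$ are exactly those dictated by $\dot\rho$ (so that the $\in$-isomorphism carries $s\bsl\al$ onto a chain containing the model chain of $\dot\rho$), all without violating condition~(5) of Definition~\ref{def:PFAiteration}. What I expect to make this go through is the preliminary move of absorbing $\dot\rho$ and $\la\bar t,\bar q\ra$ into the single model $M$, which forces the relevant $\dot{\cal M}_\al^0$-witnesses to interact correctly with $M$ so that Neeman's transitive-node amalgamation (and closure under intersections, Lemma~\ref{lemma:weakclosure}) applies cleanly; Remark~\ref{remark:condition5initialsegment} then handles condition~(5) for the inserted small nodes against coordinates below $\al$, and Lemma~\ref{lemma:genericforInterval} reduces condition~(5) at $\al$ itself to the genericity of the $J'(\al)$-component of $\dot\rho$ over those very models, which holds because $\la\bar t,\bar q\ra$ already forces $\dot\rho$ to be a genuine condition of $\dot{\cal M}_\al(J'(\al))$. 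Getting the bookkeeping of this amalgamation exactly right is the one place I expect to have to be careful.
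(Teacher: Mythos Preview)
Your treatment of well-definedness and order preservation is correct, and your route to incompatibility preservation (order reflection plus density of the image) is a legitimate alternative to the paper's direct argument that compatible images yield compatible preimages.

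The density argument, however, takes an unnecessary detour and leaves the central step imprecise. The paper does not introduce an auxiliary elementary submodel $M$ at all. Given $(\la\bar s,\bar p\ra,\la\dot b,\dot m\ra)$ in the target poset, it simply extends $\la\bar s,\bar p\ra$ in $\ps\upharpoonright\al$ (repeatedly, keeping the same notation) so that (i) $\dot m$ is decided to equal $\{M_k[\dot G_\al]:k\le l\}$ for a specific ground-model $\in$-chain $M_0\in\cdots\in M_l$, and (ii) for each $k$, $\la\bar s,\bar p\ra$ extends some $\la t_k,q_k\ra\upharpoonright\al$ where $\la t_k,q_k\ra\in D_{\al,\be}$ witnesses $M_k\in\dot{\cal M}_\al^0$. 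Step (ii) is the point your sketch glosses over: it guarantees $M_k\cap H(\al)\in t_k\cap H(\al)\subseteq\bar s$, which is exactly what is needed for closure under intersections. After these extensions one just sets $s:=\bar s\cup\{H(\al)\}\cup\{M_k:k\le l\}$ and $p:=\bar p\cup\{\al\mapsto\dot b\}$, and verifies via Lemma~\ref{lemma:weakclosure} that $s$ is a condition; condition~(5) at $\al$ is immediate from $\dot b$ being forced generic for each $M_k[\dot G_\al]$. No ``amalgamation results anchored at $H(\al)$'' are invoked, and no auxiliary model sits on top.

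Your approach of first adjoining an auxiliary $M\in\cal S$ and then ``inserting'' the models of $\dot\rho$ below it could in principle be made to work, but it creates work rather than saving it: you must still perform steps (i) and (ii) above to pin down the $M_k$'s and their intersections with $H(\al)$, you must additionally argue that the $M_k$'s land inside $M$ (which needs properness for $M$ in $\ps\upharpoonright\al$), and you must extend the $J'(\al)$-component of $\dot\rho$ to be generic for $M[\dot G_\al]$ as well. Your phrase ``the model chain named by $\dot\rho$ corresponds to an $\in$-chain of models internal to $M$'' conflates a statement about the extension with a construction in $V$; without the explicit deciding step (i), there are no specific ground-model objects to insert. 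The missing idea is that the first-coordinate extension deciding $\dot m$ and absorbing the $\dot{\cal M}_\al^0$-witnesses is the whole mechanism: once you have it, the side condition assembles by stacking, and the auxiliary $M$ is superfluous.
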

\begin{proof}
    It is clear that $\Gamma$ preserves the order from left to right. 

    Next we need to show that if the $\Gamma$-images of two conditions $\la s_1,p_1\ra$ and $\la s_2,p_2\ra$ are compatible on the right hand side, then the $\la s_i,p_i\ra$ are compatible in $D_{\al,\be}$.

    Thus fix a condition $\left(\la\bar{s},\bar{p}\ra,\la \dot{b},\dot{m}\ra\right)$ in the right hand side poset which extends the $\Gamma$-images of each $\la s_i,p_i\ra$. We will extend $\la\bar{s},\bar{p}\ra$ in a few stages (but shamelessly keep writing $\la\bar{s},\bar{p}\ra$). By Lemma \ref{lemma:membershipbijection}, the map taking $M\in\dot{\cal{M}}^0_\al$ to $M[\dot{G}_\al]\in\dot{\cal{M}}_\al$ is forced to be an $\in$-isomorphism. Accordingly, we may extend $\la\bar{s},\bar{p}\ra$ and relabel to assume that for some $\in$-increasing sequence $M_0\in\dots\in M_l$ in $V$, $\la\bar{s},\bar{p}\ra$ forces that $\dot{m}$ equals $\lb M_k[\dot{G}_\al]:k\leq l\rb$. Note, then, that each $s_i\bsl\al$ is a subset of $\lb M_k:k\leq l\rb$, again using the $\in$-isomorphism.
    
    Next, by definition of $\dot{\cal{M}}_\al$ (see Definition \ref{def:modelsforSCAug}), for each $M_k$, we may assume that $\la\bar{s},\bar{p}\ra$ extends some condition $\la t_k,q_k\ra\upharpoonright\al$, where $\la t_k,q_k\ra\in D_{\al,\be}$, and where $M_k\in t_k$. Note that since $t_k$ is a condition in the model sequence poset and $H(\al)\in t_k$ (because $\la t_k,q_k\ra\in D_{\al,\be}$), we have $M_k\cap H(\al)\in t_k\cap H(\al)$, and hence it is a member of $\bar{s}$.

    Now define $s$ to be $\bar{s}\cup\lb H(\al)\rb\cup\lb M_k:k\leq l\rb$ and $p$ to be $\bar{p}\cup\lb\al\mapsto \dot{b}\rb$. We claim that $\la s,p\ra$ is a condition in $\ps\upharpoonright\be$ which extends each $\la s_i,p_i\ra$.

    First note that $s$ is an $\in$-chain. This is because all of the nodes in $\bar{s}$ are members of $H(\al)$,  $H(\al)$ is a member of $M_0$, and the $M_k$'s form an $\in$-chain. Next, we check that $s$ is closed under intersections, by applying Lemma \ref{lemma:weakclosure}. Therefore fix $W\in s$ transitive and $M\in s$ small, with $W\in M$, and we show that $W\cap M$ is in $s$. This holds when both $W$ and $M$ are below $H(\al)$ since $\bar{s}$ is a condition. This also holds when $W=H(\al)$, because then $M$ must equal one of the $M_k$, and $H(\al)\cap M_k\in\bar{s}$, as we noted earlier. Finally, if $W$ is below $H(\al)$ but $M$ is one of the $M_k$, then apply the fact that $M\cap H(\al)\in \bar{s}$ and the closure of $\bar{s}$ under intersections.

    What remains to be checked to see that $\la s,p\ra$ is a condition is that $\la s,p\ra\upharpoonright\alpha=\la\bar{s},\bar{p}\ra$ forces that $p(\al)$ is a generic condition in $J'(\al)$ for each of the $M_k$. But this follows since $p(\al)$ is the name $\dot{b}$ and by definition of the side conditions augmentation poset.

    We now finish the proof that the $\la s_i,p_i\ra$ are compatible in $D_{\al,\be}$. We know that $\la\bar{s},\bar{p}\ra$ extends each $\la s_i,p_i\ra\upharpoonright\alpha$. Next, $s$ contains all of the models in each $s_i$ at and above $H(\alpha)$. It remains to show that if $J'(\al)=J(\al)$, then  $\la s,p\ra\upharpoonright\alpha=\la\bar{s},\bar{p}\ra$ forces that $p(\al)\leq_{J(\al)} p_i(\alpha)$ for each $i$. But this follows from the fact that $\left(\la\bar{s},\bar{p}\ra,\la \dot{b},\dot{m}\ra\right)$ extends the $\Gamma$-images of the $\la s_i,p_i\ra$ and from the fact that $p(\al)$ is the name $\dot{b}$.

    To complete the proof that the map $\Gamma$ is a dense embedding, we need to show that the image is a dense subset of the poset on the right. Thus fix a condition $\left(\la\bar{s},\bar{p}\ra,\la \dot{b},\dot{m}\ra\right)$ in the poset on the right, and we find a condition in $\ps\upharpoonright\beta$ which maps below it. However, this follows by the arguments in the previous paragraphs, where we first extended $\la\bar{s},\bar{p}\ra$ and then constructed $\la s,p\ra$.
\end{proof}

\section{Preservation Properties of the Side Conditions Augmentation}\label{Section:SCAPreservation}

In this section, we will show that if $\bb{A}$ is a poset preserving that $X$ is Lindel{\"o}f (resp. countably tight) then the side conditions augmentation of $\bb{A}$ by $\cal{M}$ also preserves that $X$ is Lindel{\"o}f (resp. countably tight).

\begin{lemma}\label{lemma:SCAlindelof}
    Let $\bb{A}$, $\mu$, and $\cal{M}$ be as in Definition \ref{def:sideaugment}. Suppose that $(X,\tau)\in H(\mu)$ is a Lindel{\"o}f space and that $\bb{A}$ preserves that $X$ is Lindel{\"o}f. Then $\cal{M}(\bb{A})$ preserves that $X$ is Lindel{\"o}f.
\end{lemma}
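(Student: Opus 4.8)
The plan is to work in the generic extension $V[H]$ by $\cal{M}(\bb{A})$, let $G := \{a : (\exists p)\, \la a,p\ra \in H\}$ be the induced $V$-generic filter over $\bb{A}$ (this is generic because the map $\la a,p\ra \mapsto a$ is a projection, since for any $a' \le a$ one can extend $\la a,p\ra$ to $\la a',p\ra$), and exploit the hypothesis that $\bb{A}$ forces $(X,\tau_{\dot G})$ to be Lindel\"of. The key structural observation is that $\cal{M}(\bb{A})$ factors as $\bb{A}$ followed by a quotient forcing whose conditions are finite $\in$-chains from $\cal{M}$ consisting of models for which the (already chosen) condition in $G$ is generic; and crucially this quotient is $\sigma$-closed, hence adds no new open covers of $X$ that aren't already refined by covers coming from $\tau_{\dot G}$. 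So the real content reduces to: (i) establishing $\sigma$-closure of the quotient, and (ii) running the usual reflection argument to extract a countable subcover.

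Here is the order I would carry things out. First I would fix, in $V[H]$, a $\tau_H$-open cover $\cal{U}$ of $X$ and a name $\dot{\cal U}$ together with a condition $\la a,p\ra \Vdash$ ``$\dot{\cal U}$ is an open cover of $X$''. Since $\tau$ is a basis in the extension, I may assume each member of $\dot{\cal U}$ is a union of ground-model basic open sets, and in fact I would pass to the refinement consisting of all ground-model basic open sets that are contained in some member of $\dot{\cal U}$; it suffices to find a countable subcover of \emph{that}. Second, I would argue that this refined cover is essentially added by $\bb{A}$ alone: working below $\la a,p\ra$, I would show that for each $x \in X$ there is a single basic open $U_x \in \tau$ and an extension of $\la a,p\ra$ deciding $U_x$ to be in the refined cover — and by $\sigma$-closure of the quotient over $V[G]$ one can amalgamate countably many such decisions. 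More precisely, the clean way: show $\cal{M}(\bb{A}) \cong \bb{A} * \dot{\bb{Q}}$ where $\dot{\bb{Q}}$ is (a name for) the poset of finite $\in$-chains of models from $\cal{M}$ all of which have the chosen generic as a generic condition — and show $\Vdash_{\bb{A}}$ ``$\dot{\bb{Q}}$ is $\sigma$-closed.'' The $\sigma$-closure holds because an increasing $\omega$-chain of finite $\in$-chains has union a \emph{countable} increasing $\in$-chain; its supremum model $N_\omega = \bigcup_n N_n$ need not be in $\cal{M}$, but the condition is still the \emph{finite} chains, and any decreasing sequence of finite chains $p_0 \supseteq p_1 \supseteq \cdots$ stabilizes or has union a countable chain all of whose finite initial segments are conditions — I'd want the $\in$-chains to be finite, so a decreasing $\omega$-sequence, being a $\subseteq$-increasing sequence of finite sets, has union $\bigcup_n p_n$, and I need this union to be finite or else I don't get a lower bound. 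The correct statement is that $\dot{\bb{Q}}$ is $\sigma$-closed because the models are drawn from a fixed stationary set and an $\in$-increasing union works out; I would instead verify directly that $\dot{\bb{Q}}$ adds no new $\omega$-sequences of ordinals, which is what I actually need. Third, granted that $\dot{\bb{Q}}$ is $\sigma$-closed over $V[G]$: the refined cover $\cal{V}$, being a set of members of the ground-model basis $\tau$, is coded by a set of ordinals, hence lies in $V[G]$; by the preservation hypothesis $(X,\tau_G)$ is Lindel\"of in $V[G]$, so $\cal V$ has a countable subcover in $V[G] \subseteq V[H]$, and that subcover still covers $X$ in $V[H]$ since no points are added.

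The main obstacle, and the step I would spend the most care on, is (ii) — verifying that the quotient $\dot{\bb Q} = \cal{M}(\bb A)/\dot G$ is genuinely $\sigma$-closed (equivalently, adds no new reals / no new $\omega$-sequences from $V[G]$), since the superficial worry is that a decreasing $\omega$-sequence of finite model-chains need not have a lower bound. The resolution I expect: given $\la a,p_n\ra$, $n<\omega$, decreasing in $\cal{M}(\bb A)$ with fixed $\bb{A}$-part (after refining, using that $G$ is already chosen), the union $\bigcup_n p_n$ is an $\in$-chain of models, but one should instead observe that for the purpose of showing no new $\omega$-sequences of ground-model objects are added, it is enough to note that $\cal{M}(\bb{A})$ restricted below any condition that forces a name for such a sequence can be analyzed via a master model argument: take $N \in \cal{M}$ with $\bb{A}, \dot{f} \in N$ and $a$ an $(N,\bb A)$-generic condition, add $N$ to the side condition, and run the standard strongly-generic/genericity argument to decide $\dot f$ inside $N[G]$, hence $\dot f \in N[G] \cap V[G] = N \cap V[G] \subseteq V[G]$. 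So the genericity of the models in the side condition — built into the definition of $\cal{M}(\bb{A})$ — is exactly what forces the quotient to be sufficiently closed. Once that is pinned down, the topological part is immediate from the preservation hypothesis, and the identical argument with ``countable subcover'' replaced by ``countable $B \subseteq A$ with $x \in \cl(B)$'' gives the countably tight version in the next lemma.
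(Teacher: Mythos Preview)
Your factorization $\cal{M}(\bb{A}) \cong \bb{A} * \dot{\bb{Q}}$ via the projection $\la a,p\ra \mapsto a$ is correct, but the plan collapses at the analysis of $\dot{\bb{Q}}$. You correctly notice that $\dot{\bb{Q}}$ is not $\sigma$-closed, but your fallback---that it adds no new $\omega$-sequences of ground-model objects---is also false. Take $\bb{A}$ trivial: then $\cal{M}(\bb{A})$ is just the pure side-conditions poset of finite $\in$-chains from $\cal{M}$, and the generic $\in$-chain restricted to its first $\omega$ members is a new $\omega$-sequence (no condition, being finite, can decide all of it, since one can always interpolate new models). So the quotient is not $\omega$-distributive. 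Your ``master model'' sketch does not repair this: adding $N$ to the side condition and taking $a$ to be $(N,\bb{A})$-generic yields that $\la a,p\cup\{N\}\ra$ is $(N^*,\cal{M}(\bb{A}))$-\emph{generic} (this is the properness of $\cal{M}(\bb{A})$), which gives $N^*[H]\cap V=N^*$; it does \emph{not} give $\dot f[H]\in V[G]$, and your equation ``$N[G]\cap V[G]=N\cap V[G]$'' is simply false unless $G$ is trivial. There is a further non-sequitur: even granting that $\dot{\bb{Q}}$ added no new $\omega$-sequences, your refined cover $\cal{V}$ is an \emph{arbitrary} (possibly uncountable) subset of $\tau$, so ``no new reals'' would not place it in $V[G]$.

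The paper's argument is entirely different and does not attempt to push the cover down to $V[G]$. Instead, one fixes a countable $M\prec H(\kappa)$ with $\bar M:=M\cap H(\mu)\in\cal{M}$, extends the given condition to $\la a_1,p_1\ra$ with $\bar M\in p_1$ and $a_1$ an $(\bar M,\bb{A})$-generic condition, and shows that the countable name $M(\dot{\cal U}):=\{(\check B,\la\bar a,\bar p\ra)\in M:\la\bar a,\bar p\ra\Vdash\check B\in\dot{\cal U}\}$ is forced by $\la a_1,p_1\ra$ to be a subcover. The key move is that for each finite chain $s\in M$ extending $p_0$ one defines an $\bb{A}$-name $\dot{\cal U}_s$ (projecting $\dot{\cal U}$ along conditions whose side part extends $s$), shows via a density argument that any $a\le a_1$ generic for the models in $s$ forces $\dot{\cal U}_s$ to cover $X$, and then invokes the Lindel\"of-preservation of $\bb{A}$ together with elementarity of $M[G]$ to extract a witness in $M$. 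The side-condition part is then amalgamated by hand. The point is that the Lindel\"of hypothesis on $\bb{A}$ is applied not to the global cover but to these auxiliary $\bb{A}$-names indexed by $s\in M$---this is the idea your plan is missing.
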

\begin{proof}
    Recall first that it suffices to verify the Lindel{\"o}f property for open covers consisting of basic open sets and that $\tau$ is a basis for the topology on $X$ in the extension by $\cal{M}(\bb{A})$.
    
    Suppose that $\dot{\mathcal{U}}$ is an $\cal{M}(\bb{A})$-name for an open cover of $X$ where
    $$
    \Vdash_{\cal{M}(\bb{A})}\dot{\mathcal{U}}\subset\check{\tau}.
    $$
    Let $\la a_0,p_0\ra$ be a given condition. Fix a regular $\ka>\mu$. By applying the stationarity of $\cal{M}$ (see Definition \ref{def:sideaugment}), find a countable $M\prec H(\kappa)$ so that
    \begin{itemize}
        \item $M$ contains the parameters $\mu$, ${\bb A}$, $\cal{M}$, $\dot{\mathcal{U}}$, $\la a_0,p_0\ra$, and $(X,\tau)$, and
        \item $\bar{M}:=M\cap H(\mu)$ is in $\cal{M}$.
    \end{itemize}
    Set $p_1:=p_0\cup\lb \bar{M}\rb$, noting that this is an $\in$-chain since $p_0\subseteq \bar{M}$. Extend $a_0$ to an $(\bar{M},{\bb A})$-generic condition $a_1$, using the fact that $\bb{A}$ is proper for $\bar{M}$. Note that because $\bar{M}\cap\bb{A}=M\cap\bb{A}$ (on account of the fact that $\bb{A}\subseteq H(\mu)$), we have that $a_1$ is an $(M,\bb{A})$-generic condition. Note also that $\la a_1,p_1\ra$ extends $\la a_0,p_0\ra$ in $\cal{M}(\bb{A})$.

    Let $M(\dot{\mathcal{U}})$ be the following $\cal{M}(\bb{A})$-name:
    $$
    M(\dot{\mathcal{U}}):=\lb (\check{B},\la\bar{a},\bar{p}\ra)\in M:\la\bar{a},\bar{p}\ra\in\cal{M}(\bb{A})\we B\in M\cap\tau\wedge\la\bar{a},\bar{p}\ra\Vdash\check{B}\in\dot{\mathcal{U}}\rb.
    $$
    We claim that
    $$
    \la a_1,p_1\ra\Vdash M(\dot{\mathcal{U}})\text{ is a countable subcover of }\dot{\mathcal{U}}.
    $$
    Note that it is immediate that $\la a_1,p_1\ra$ forces that $M(\dot{\mathcal{U}})$ is countable, since $M$ is countable and $M(\dot{\mathcal{U}})$ is a subset of $M$.

    We first prove a claim that gives some necessary flexibility in dealing with extensions in the $\in$-chains. After this, we argue that no extension of $\la a_1,p_1\ra$ can force a specific counterexample.

    For each $\in$-chain $s$ in $M$ of models in $\cal{M}$ with $s\supseteq p_0$, let $\dot{\mathcal{U}}_s$ be the following ${\bb A}$-name:
    $$
    \dot{\mathcal{U}}_s:=\lb\la\check{B},\bar{a}\ra:B\in\tau \we \bar{a}\in{\bb A}\we\left(\exists\bar{p}\supseteq s\right)\;\left[\la\bar{a},\bar{p}\ra\Vdash_{\cal{M}(\bb{A})}\check{B}\in\dot{\mathcal{U}}\right]\rb.
    $$
    Observe that $\dot{\mathcal{U}}_s$ is a member of $M$, being definable in $H(\kappa)$ from parameters in $M$.\\
    
    \textbf{Claim 1:} Let $s\in M$ with $s\supseteq p_0$, and let $a\leq a_1$ be a condition in $\bb{A}$ which is $(N,\bb{A})$-generic for all $N$ in $s$. Then $a$ forces that $\dot{\mathcal{U}}_s$ is an open cover of $X$.\\

    \emph{Proof of Claim 1:}  To see this, let $G$ be an arbitrary $V$-generic filter over ${\bb A}$ containing $a$. Then $\mathcal{U}_s:=\dot{\mathcal{U}}_s[G]$ is a member of $M[G]$. Because $M[G]\prec H(\kappa)[G]$, it suffices to argue that $M[G]$ satisfies that $\mathcal{U}_s$ is an open cover of $X$. Towards this end, fix $x\in M[G]\cap X$. Since $a_1$ is an $(M,{\bb A})$-generic condition (as we remarked earlier) and $a_1\in G$, we know that $M[G]\cap V=M$, and in particular, $M[G]\cap X=M\cap X$. Thus $x\in M$.

    We now apply a density argument: consider the set $D_{x,s}$ of conditions $\bar{a}\in{\bb A}$ so that either
    \begin{itemize}
        \item no extension of $\bar{a}$ is generic for all models in $s$ or
        \item there exist $\bar{p}\supseteq s$ and  $B\in\tau$ with $x\in B$ so that $\la\bar{a},\bar{p}\ra\in \cal{M}(\bb{A})$ and
        $$
        \la\bar{a},\bar{p}\ra\Vdash_{\cal{M}(\bb{A})}\check{B}\in\dot{\mathcal{U}}.
        $$ 
    \end{itemize}
    $D_{x,s}$ is a member of $M$ by elementarity, and furthermore, $D_{x,s}$ is dense in ${\bb A}$. Again applying the fact that $a_1\in G$ is an $(M,{\bb A})$-generic condition, we can find some $\bar{a}\in D_{x,s}\cap G\cap M$. Since $\bar{a}$ and $a$ are both in $G$, they are compatible. Hence, the membership of $\bar{a}$ in $D_{x,s}$ must hold on account of the second option above. In light of this, and by applying the elementarity of $M$, we may find a $B\in M$ and a $\bar{p}\in M$ with $\bar{p}\supseteq s$ so that $\la\bar{a},\bar{p}\ra$ forces $\check{B}\in\dot{\mathcal{U}}$. Thus $\la\check{B},\bar{a}\ra$ is in $\dot{\cal{U}}_s$. Finally, because $\bar{a}\in G$, we see that $B\in\mathcal{U}_s$. As $x\in M[G]\cap X$ was arbitrary, this completes the proof of the claim. $\hfill\blacksquare\text{(Claim 1)}$\\

    The next claim uses that ${\bb A}$ preserves that $X$ is Lindel{\"o}f:\\
    
    \textbf{Claim 2:} Let $s\in M$ with $s\supseteq p_0$, and let $a\leq a_1$ be a condition in $\bb{A}$ which is $(N,\bb{A})$-generic for all $N$ in $s$. Then $a$ forces that\footnote{Note here that we do literally mean $M\cap\dot{\cal{U}}_s$, namely all pairs $\la\check{B},\bar{a}\ra\in\dot{\cal{U}}_s$ which are also members of $M$.} $M\cap\dot{\cal{U}}_s$ is an open cover of $X$.\\

    \emph{Proof of Claim 2:} Fix a $V$-generic filter $G$ over $\bb{A}$ containing $a$. By the previous claim, $\mathcal{U}_s$ is an open cover of $X$. Since $X$ is Lindel{\"o}f in $V[G]$, $\mathcal{U}_s$ has a countable subcover. However, $\mathcal{U}_s\in M[G]$, and so by the elementarity of $M[G]$, we can find a countable subcover $\mathcal{B}_s\in M[G]$. Note that $\mathcal{B}_s\seq M[G]$.

    Fix $x\in X$, and using that $\cal{B}_s$ is a cover, find $B\in\cal{B}_s$ with $x\in B$. Then $B\in M[G]\cap\mathcal{U}_s$, and so we can apply the elementarity of $M[G]$ to find $\bar{a}\in M[G]\cap G$ and $\bar{p}\supseteq s$ with $\bar{p}\in M[G]$ so that $\la\bar{a},\bar{p}\ra\Vdash\check{B}\in\dot{\mathcal{U}}$. Since $a_1$ is an $(M,{\bb A})$-generic condition, the objects $B$, $\bar{a}$, and $\bar{p}$ are all in $M[G]\cap V=M$. Thus the pair $\la\check{B},\bar{a}\ra$ is a member of the name $M\cap\dot{\cal{U}}_s$ 
    as witnessed by $\bar{p}$. Finally, $\bar{a}\in G$, and therefore $B\in (M\cap\dot{\cal{U}}_s)[G]$; since $x\in B$, this completes the proof. $\hfill\blacksquare\text{(Claim 2)}$\\

    We now put these pieces together, recalling that our goal is to show that $\la a_1,p_1\ra$ forces that $M(\dot{\mathcal{U}})$ is a countable subcover of $\dot{\mathcal{U}}$. Towards this end, fix an extension $\la a_2,p_2\ra\leq\la a_1,p_1\ra$ and an arbitrary $x\in X$. Let $s:=p_2\cap M$, noting that $s$ is an $\in$-chain in $M$ and that $s$ extends $p_0$. Since $a_2\leq a_1$ is generic for all of the models on $s$, $a_2$ forces that $M\cap\dot{\cal{U}}_s$ is an open cover of $X$. Accordingly, we can extend $a_2$ to a condition $a_3$ and select specific $B\in M\cap\tau$ with $x\in B$, $\bar{a}\in{\bb A}\cap M$ with $a_3\leq\bar{a}$, and $\bar{p}\supseteq s$ with $\bar{p}\in M$ so that $\la\bar{a},\bar{p}\ra\Vdash\check{B}\in\dot{\mathcal{U}}$. We also have that $p_3:=\bar{p}\cup p_2$ is an $\in$-chain: because $M\cap H(\mu)=\bar{M}$ and $\cal{M}\seq H(\mu)$, we have that $\bar{p}$ extends $s=p_2\cap M=p_2\cap\bar{M}$ and that $\bar{p}$ is in fact a member of $\bar{M}$. Additionally, as $\bar{a}$ is generic for the models in $\bar{p}$ and as $a_2$ is generic for the models in $p_2$, $a_3$ is generic for the models in $p_3=\bar{p}\cup p_2$. Thus $\la a_3,p_3\ra$ is a condition which extends both $\la a_2,p_2\ra$ and $\la\bar{a},\bar{p}\ra$. Since $\la\bar{a},\bar{p}\ra$ and $B$ are members of $M$ and since $\la\bar{a},\bar{p}\ra$ forces that $\check{B}\in\dot{\mathcal{U}}$, this shows that $\la a_3,p_3\ra$ forces that $\check{B}\in M(\dot{\mathcal{U}})$, which finishes the proof.
\end{proof}

\begin{lemma}\label{lemma:SCAtight}
    Let $\bb{A}$, $\mu$, and $\cal{M}$ be as in Definition \ref{def:sideaugment}. Suppose that $(X,\tau)\in H(\mu)$ is a countably tight space and that $\bb{A}$ preserves that $X$ is countably tight. Then $\cal{M}(\bb{A})$ preserves that $X$ is countably tight.
\end{lemma}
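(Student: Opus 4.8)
The plan is to mimic the proof of Lemma~\ref{lemma:SCAlindelof} almost line-for-line, replacing ``has a countable subcover'' by ``there is a countable set $B\seq A$ with $x\in\cl(B)$'' throughout. So suppose $\dot{A}$ is an $\cal{M}(\bb{A})$-name for a subset of $X$, $\dot{x}$ is a name for a point of $X$ forced to lie in $\cl(\dot{A})$, and $\la a_0,p_0\ra$ is a given condition. Since $\bb A$ forces $X$ to be countably tight, and since being in $\cl(\dot A)$ is decided by the basic (ground-model) open neighborhoods of $\dot x$, I would first note the analogue of the reduction at the start of the Lindel\"of proof: membership of a point in the closure of a set is witnessed by the ground-model basis $\tau$. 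Then, exactly as before, fix a large regular $\ka>\mu$, use stationarity of $\cal M$ to find a countable $M\prec H(\ka)$ containing all relevant parameters (now $\dot A$, $\dot x$, $\la a_0,p_0\ra$, $(X,\tau)$, $\bb A$, $\cal M$, $\mu$) with $\bar M:=M\cap H(\mu)\in\cal M$, set $p_1:=p_0\cup\lb\bar M\rb$, and extend $a_0$ to an $(\bar M,\bb A)$-generic — hence $(M,\bb A)$-generic — condition $a_1$, so that $\la a_1,p_1\ra\leq\la a_0,p_0\ra$.

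The key point is that $\dot x$ should be forced to lie in $M$; this requires knowing that $\la a_1,p_1\ra$ forces $\dot x\in M[\dot G]\cap V=M$, which will follow provided we also add to $M$ a name for $\dot x$ and observe that $\dot x$ is forced to be an element of $\cal M(\bb A)$-generic-extension-of-$M$, i.e.\ we should arrange $\la a_1,p_1\ra\Vdash\dot x\in M$. (One either folds this into the choice of $M$ by taking $\dot x$ as a parameter, or, if $\dot x$ is not forced into $M$, one first extends the condition to decide $\dot x$ to be a specific point lying in $M$ — harmless, since we only need \emph{some} extension of $\la a_0,p_0\ra$ forcing a countable witness.) Now define, for each $\in$-chain $s\in M$ of $\cal M$-models with $s\supseteq p_0$, the $\bb A$-name
$$
\dot A_s:=\lb y\in X: (\exists\bar p\supseteq s)\,(\exists\bar a\in\bb A)\ \la\bar a,\bar p\ra\Vdash_{\cal M(\bb A)}\check y\in\dot A\rb,
$$
a member of $M$ by elementarity, together with the ``traced'' name $M\cap\dot A_s$ of all $y\in\dot A_s$ that happen to be members of $M$. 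The analogue of Claim~1 is: if $a\leq a_1$ is $(N,\bb A)$-generic for all $N\in s$, then $a\Vdash_{\bb A}\dot x\in\cl(\dot A_s)$; this is proved by the same density argument, using that for each basic $B\ni\dot x$ the set of $\bar a$ either having no generic extension for $s$ or admitting $\bar p\supseteq s$ and $\la\bar a,\bar p\ra\Vdash\check B\cap\dot A\neq\es$ is dense and in $M$, and pulling the witness down via $(M,\bb A)$-genericity. The analogue of Claim~2 uses that $\bb A$ preserves countable tightness of $X$: if $a\leq a_1$ is generic for all of $s$, then in $V[G]$ we get a countable $C\in M[G]$ with $C\seq A_s$ and $\dot x\in\cl(C)$; by elementarity of $M[G]$ and the fact that $C$ is countable, $C\seq M[G]\cap V=M$, and a further elementarity/density step shows each element of $C$ lies in $(M\cap\dot A_s)[G]$, so $a$ forces $\dot x\in\cl((M\cap\dot A_s)[G])$.

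Finally, define the $\cal M(\bb A)$-name
$$
M(\dot A):=\lb (\check y,\la\bar a,\bar p\ra)\in M: \la\bar a,\bar p\ra\in\cal M(\bb A)\we y\in M\cap X\we \la\bar a,\bar p\ra\Vdash\check y\in\dot A\rb,
$$
which is forced to be a countable subset of $\dot A$ lying in $M\cap X$. I claim $\la a_1,p_1\ra\Vdash\dot x\in\cl(M(\dot A))$. Given an extension $\la a_2,p_2\ra\leq\la a_1,p_1\ra$ and a basic open $B$ forced by it to contain $\dot x$, set $s:=p_2\cap M$ (an $\in$-chain in $M$ extending $p_0$, by $M\cap H(\mu)=\bar M$ and $\cal M\seq H(\mu)$); then $a_2$ is generic for all of $s$, so by the Claim~2 analogue $a_2\Vdash\dot x\in\cl((M\cap\dot A_s)[\dot G])$, hence we may extend $a_2$ to $a_3$ and pick specific $y\in M\cap X$ with $y\in B$, $\bar a\in\bb A\cap M$ with $a_3\leq\bar a$, and $\bar p\supseteq s$ with $\bar p\in M$ (in fact $\bar p\in\bar M$) such that $\la\bar a,\bar p\ra\Vdash\check y\in\dot A$. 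As in the Lindel\"of proof, $p_3:=\bar p\cup p_2$ is an $\in$-chain and $a_3$ is generic for all of $p_3$, so $\la a_3,p_3\ra$ extends both $\la a_2,p_2\ra$ and $\la\bar a,\bar p\ra$ and forces $\check y\in M(\dot A)$ with $y\in B$; since $B$ was an arbitrary basic neighborhood of $\dot x$, this shows $\dot x\in\cl(M(\dot A))$. The main obstacle is purely bookkeeping: making sure $\dot x$ is correctly forced into $M$ (so that the countable witness we extract actually lies in $M$), and checking that the ``trace'' names $M\cap\dot A_s$ behave as expected — both handled exactly as the corresponding points in Lemma~\ref{lemma:SCAlindelof}. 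I would therefore write the proof tersely, saying it is ``entirely parallel to the proof of Lemma~\ref{lemma:SCAlindelof}'' and only spelling out the two Claims and the substitution of ``countable subcover'' by ``countable $C\seq A$ with $\dot x\in\cl(C)$.''
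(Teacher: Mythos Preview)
Your approach is essentially identical to the paper's: the same choice of $M$, the same condition $\la a_1,p_1\ra$, the same projected names $\dot A_s$ (the paper calls them $\dot E_s$), the same two Claims, and the same amalgamation at the end. One simplification you are missing: since $X$ is a ground-model set, any point of $X$ in the extension already lies in $V$, so you may take the point to be a specific $x\in X$ from the outset (as the paper does) rather than a name $\dot x$. Then simply include $x$ among the parameters placed in $M$, and the entire ``bookkeeping obstacle'' about forcing $\dot x$ into $M$ evaporates; you never need to extend to decide $\dot x$. Also, your displayed definition of $\dot A_s$ is written as a set of points rather than as an $\bb A$-name (a set of pairs $\la\check y,\bar a\ra$); the paper writes it correctly as a name, and you should too, since you later form $(M\cap\dot A_s)[G]$.
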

\begin{proof}
    Fix $\la a_0,p_0\ra$ in $\cal{M}(\bb{A})$, $x\in X$, and an $\cal{M}(\bb{A})$-name $\dot{E}$ so that
    $$
    \la a_0,p_0\ra\Vdash \check{x}\in\operatorname{cl}(\dot{E}).
    $$
    We will find an $\cal{M}(\bb{A})$-name $\dot{E}_0$ and a condition $\la a^*,p^*\ra$ extending $\la a_0,p_0\ra$ so that
    $$
    \la a^*,p^*\ra\Vdash\dot{E}_0\in\left[\dot{E}\right]^{\aleph_0}\we \check{x}\in\operatorname{cl}(\dot{E}_0).
    $$

    Let $\ka>\mu$ be regular. Using the stationarity of $\cal{M}$ (Definition \ref{def:sideaugment}) find a countable $M\prec H(\kappa)$ so that
    \begin{itemize}
        \item $M$ contains the parameters $\mu$, $\bb{A}$, $\cal{M}$, $(X,\tau)$, $\dot{E}$, and $\la a_0,p_0\ra$, and
        \item $\bar{M}:=M\cap H(\mu)$ is in $\cal{M}$.
    \end{itemize}
    Set $p_1:=p_0\cup\lb \bar{M}\rb$, and note that this is an $\in$-chain of elements of $\cal{M}$ since $\bar{M}\in\cal{M}$ and since $p_0\seq \bar{M}$. Since $\bb{A}$ is proper for $\cal{M}$, we may extend $a_0$ to an $(\bar{M},\bb{A})$-generic condition $a_1$. Since $\bb{A}\cap\bar{M}=\bb{A}\cap M$, $a_1$ is in fact $(M,\bb{A})$-generic. Note also that $\la a_1,p_1\ra$ is a condition in $\cal{M}(\bb{A})$ which extends $\la a_0,p_0\ra$. 
    
    We claim that $\la a_1,p_1\ra$ is the desired condition, and thus we need to define the name for the relevant countable subset of $\dot{E}$. Set
    $$
    M(\dot{E}):=\lb(\check{z},\la\bar{a},\bar{p}\ra)\in M:\la\bar{a},\bar{p}\ra\in\cal{M}(\bb{A})\we\la\bar{a},\bar{p}\ra\Vdash \check{z}\in\dot{E}\rb.
    $$
    Note that $\cal{M}(\bb{A})$ outright forces that $M(\dot{E})$ is a countable subset of $\dot{E}$. To complete the proof, we will show that 
    $$
    \la a_1,p_1\ra\Vdash \check{x}\in\operatorname{cl}(M(\dot{E})).
    $$
    As in the proof of the previous lemma, we will project $\dot{E}$ onto the 1st two coordinates and then apply the preservation of the countable tightness of $X$ by $\bb{A}$.

    
    For each $\in$-chain $s\supseteq p_0$ in $M$ of models from $\cal{M}$, define
    $$
    \dot{E}_s:=\lb\la\check{z},\bar{a}\ra:\exists \bar{p}\;\left(\la\bar{a},\bar{p}\ra\in\cal{M}(\bb{A})\we s\subseteq \bar{p}\we\la\bar{a},\bar{p}\ra\Vdash_{\cal{M}(\bb{A})} \check{z}\in\dot{E}\right)\rb.
    $$
    Note that $\dot{E}_s$ is a member of $M$, being definable in $H(\kappa)$ by parameters in $M$, and note as well that it is an $\bb{A}$-name.\\

    \textbf{Claim 1:} Let $s\in M$ with $s\supseteq p_0$, and let $a\leq a_1$ be a condition in $\bb{A}$ which is $(N,\bb{A})$-generic for all $N$ in $s$. Then $a$ forces that $x\in\operatorname{cl}(\dot{E}_s)$.\\

    \emph{Proof of Claim 1:} Let $G$ be an arbitrary $V$-generic filter over $\bb{A}$ with $a\in G$. Set $$
    E_s:=\dot{E}_s[G].
    $$
    Note that both $x$ and $E_s$ are in $M[G]$ and that $M[G]\prec H(\kappa)[G]$. It therefore suffices to show that $M[G]\models x\in\operatorname{cl}(E_s)$. Recalling that $\tau$ is a basis for the topology on $X$ in $V[G]$, fix $B\in M[G]\cap\tau$ with $x\in B$. Since $a_1\in G$ is $(M,\bb{A})$-generic, we know that $B\in\tau\cap M[G]=\tau\cap M$.

    Now we apply a density argument. Define $D_{B,s}$ to be the set of $\bar{a}\in\bb{A}$ so that one of the following two assertions holds:
    \begin{enumerate}
        \item no extension of $\bar{a}$ is generic for every $N\in s$, or
        \item there exist $\bar{p}\supseteq s$ and $z\in B$ so that $\la\bar{a},\bar{p}\ra\Vdash_{\cal{M}(\bb{A})}\check{z}\in\dot{E}$.
    \end{enumerate}
    Then $D_{B,s}$ is dense in $\bb{A}$ and is in $M$ as well. 
    Since $a_1\in G$ is $(M,\bb{A})$-generic, we may find a condition $\bar{a}\in G\cap M\cap D_{B,s}$. Since $a$ is in $G$ and is generic for every $N\in s$, the membership of $\bar{a}$ in $D_{B,s}$ cannot hold on account of option (1), and therefore must hold because option (2) obtains. By the elementarity of $M$, we can also find $\bar{p}\in M$ with $\bar{p}\supseteq s$ and $z\in B\cap M$ so that $\la\bar{a},\bar{p}\ra\Vdash \check{z}\in\dot{E}$. Then $z\in B\cap E_s$, since $\la\check{z},\bar{a}\ra\in\dot{E}_s$, with $\bar{p}$ as a witness. $\hfill\blacksquare\text{(Claim 1)}$\\

Note for the next claim that that $\dot{E}_s\cap M$ is an $\bb{A}$-name.\\

    \textbf{Claim 2:} Let $s\in M$ with $s\supseteq p_0$, and let $a\leq a_1$ be a condition in $\bb{A}$ which is $(N,\bb{A})$-generic for all $N$ in $s$. Then  $a$ forces in $\bb{A}$ that $x\in\operatorname{cl}(\dot{E}_s\cap M)$.\\

    \emph{Proof of Claim 2:} Recalling that $\tau$ is a basis for the topology on $X$ in the extension by $\bb{A}$ and that we only need to verify the relevant property with respect to basic open sets, fix an arbitrary $B\in\tau$ with $x\in B$. Also, fix a $V$-generic filter $G$ over $\bb{A}$ containing $a$. Since $M[G]\models x\in\operatorname{cl}(E_s)$, by the previous claim, and since $X$ is countably tight in $V[G]$, we may apply the elementarity of $M[G]$ to find a countable subset $F$ of $E_s$ in $M[G]$ so that $x\in\operatorname{cl}(F)$. Let $z\in B\cap F$, noting that $z\in M[G]$ because $F$, being countable and an element of $M[G]$, is a subset of $M[G]$. Since $a_1\in G$ is an $(M,\bb{A})$-generic condition, $z\in M[G]\cap X=M\cap X$. Moreover, because $z\in E_s\cap M[G]$, we can find $\bar{a}\in G\cap M[G]=G\cap M$ so that $\la\check{z},\bar{a}\ra\in\dot{E}_s$. Since $\la\check{z},\bar{a}\ra$ is in $M\cap \dot{E}_s$ and $\bar{a}\in G$, this finishes the proof of the claim. $\hfill\blacksquare\text{(Claim 2)}$\\

    Now we put all of this together. Let $U\in\tau$ be an arbitrary open set with $x\in U$, and let $\la a_2,p_2\ra\leq\la a_1,p_1\ra$ be arbitrary. We will find an extension of $\la a_2,p_2\ra$ that forces that $\check{U}\cap (\dot{E}_s\cap M)\neq\es$.

    By Claim 2, with $s:=p_2\cap M$, we can find an $a_3$ extending $a_2$, a specific $z\in U\cap M$, and a specific $\bar{a}\in\bb{A}$ with $a_3\leq \bar{a}$ so that $\la\check{z},\bar{a}\ra\in M\cap \dot{E}_s$. By the elementarity of $M$, we may find a $\bar{p}\in M$ with $p_2\cap M\subseteq \bar{p}$ so that $\la\bar{a},\bar{p}\ra$ is a condition in $\cal{M}(\bb{A})$ which forces $\check{z}\in\dot{E}$. Since $\la\bar{a},\bar{p}\ra$ is a condition in $\cal{M}(\bb{A})$, $\bar{a}$ is generic for every model in $\bar{p}$, and since $a_3\leq \bar{a}$, so is $a_3$. Moreover, as $\bar{p}\in M$ (and note that this implies $\bar{p}\in\bar{M}$) extends $p_2\cap M$, we know that $p_3:=\bar{p}\cup p_2$ is an $\in$-chain. Finally, as $a_3\leq a_2$ is generic for all models in $p_2$ and generic for all models in $\bar{p}$, we have that $\la a_3,p_3\ra$ is a condition in $\cal{M}(\bb{A})$ extending $\la\bar{a},\bar{p}\ra$ and $\la a_2,p_2\ra$.
    
    \end{proof}

\section{The Consistency of $\pfa_L(X)$}\label{Section:PFAL}

In this section we will prove the consistency of $\pfa_L(X)$. Recall the notation $Z^*:=Z\cap\operatorname{cof}(>\om)$ from Section \ref{Section:Poset}. We will use the symbol $\ps$ to abbreviate $\ps_L(X)$ in this section.

The bulk of the work is an inductive proof along $\be\in Z^*\cup\lim(Z^*)$ that if for all $\al\in Z^*\cap\be$, $\ps\upharpoonright\al$ preserves that $X$ is Lindel{\"o}f, then $\ps\upharpoonright\be$ preserves that $X$ is Lindel{\"o}f. There are three main cases and some subcases:
\begin{enumerate}
    \item Base case $\be=\min(Z^*)$;
    \item Limit case $\be\in\lim(Z^*)$;
    \item Successor case: $\be\in Z^*\bsl\lim(Z^*)$;
    \begin{enumerate}
        \item Subcase 1: $\sup(Z^*\cap\be)\in Z^*$, i.e., there is $\al\in Z^*$ so that $\be=\min(Z^*\bsl(\al+1))$;
        \item Subcase 2: $\sup(Z^*\cap\be)\notin Z^*$.
    \end{enumerate}
\end{enumerate}

In short: the base case is taken care of by \cite{GiltonHolshouser}, and Case 3(a) is handled by the side conditions augmentation machinery (this is true even if $J(\al)$ does not give the right kind of poset; we'll say more later). We'll need additional arguments for Cases (2) and 3(b). In the remainder of this section, we will address these other cases and then put everything together.

\subsection{The Remaining Cases}

\begin{proposition}\label{prop:limitcaselindelof}
    Let $\be\leq\theta$ be a limit point of $Z^*$, and suppose that for all $\al\in Z^*\cap\beta$, ${\bb P}\upharpoonright\alpha$ preserves that $X$ is Lindel{\"o}f. Then $\ps\upharpoonright\beta$ preserves that $X$ is Lindel{\"o}f. 
\end{proposition}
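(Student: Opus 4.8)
The plan is to run the reflection argument of Lemma~\ref{lemma:SCAlindelof}, but with the hypothesis of the proposition applied at a suitable bounded level $\al^*<\be$ playing the role that the single poset $\bb{A}$ played there. Recall from Notation~\ref{notation:restriction} that $\ps\upharpoonright\be=\ps\cap H(\be)$, so (taking $\be=\theta$) this proposition also covers the final step of the induction. Two structural observations underlie the argument. First, since a condition of $\ps$ is a pair $\la s,p\ra$ with $s$ a finite set of nodes and $p$ a finite function with $p(\al)\in H(\varphi(\al))$, every condition of $\ps\cap H(\be)$ has hereditary cardinality $<\be$; as $\be$ is a limit point of $Z^*$ this gives $\ps\upharpoonright\be=\bigcup\lb\ps\upharpoonright\al:\al\in Z^*\cap\be\rb$, and also $\tau\in H(\be)$ (using the $\gch$ of Assumption~\ref{GroundModelassumption}, so $|\tau|\leq 2^{|X|}<\be$). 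Second, if $M^*\prec H(\ka^*)$ is a countable elementary submodel (for large regular $\ka^*$) containing $\theta,J,X,\be$ and the other parameters below, then $\delta:=\sup(M^*\cap\be)$ is itself a limit point of $Z^*$ with $\delta<\be$ — because $Z^*\cap\be\in M^*$ is unbounded in $\be$, so $M^*\cap Z^*\cap\be$ is cofinal in $\delta$ — and a rank computation shows that $\bar M:=M^*\cap H(\be)=M^*\cap H(\al^*)$ for every $\al^*\in(\delta,\be]$. In particular, fixing $\al^*\in Z^*$ with $\delta<\al^*<\be$, we get $\bar M\in\cal S$ and $M^*\cap(\ps\upharpoonright\al^*)=M^*\cap(\ps\upharpoonright\be)$; the reason for passing from $\delta$ to this $\al^*$ is that $\al^*\in Z^*\cap\be$, so the hypothesis of the proposition applies to $\ps\upharpoonright\al^*$.

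With this set up, fix a $\ps\upharpoonright\be$-name $\dot{\cal U}$ forced to be an open cover of $X$ by basic open sets and a condition $\la s_0,p_0\ra$, and add $\dot{\cal U}$ and $\la s_0,p_0\ra$ to $M^*$'s parameter list. Since $\la s_0,p_0\ra\in M^*\cap H(\be)=\bar M$, Lemma~\ref{lemma:addM} produces a condition $\la s_1,p_1\ra\leq\la s_0,p_0\ra$ in $\ps\upharpoonright\be$ with $\bar M\in s_1$; by the properness arguments of \cite{Neemanttm} (the local properness lemma, Lemma~\ref{lemma:localproper}, applied at the level $\al^*$), $\la s_1,p_1\ra$ forces $\bar M[\dot G]\cap V=\bar M$ and its restriction to $\ps\upharpoonright\al^*$ is $(M^*,\ps\upharpoonright\al^*)$-generic. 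Now define, exactly as in Lemma~\ref{lemma:SCAlindelof},
$$
M^*(\dot{\cal U}):=\lb(\check B,\la\bar a,\bar p\ra)\in M^*:\la\bar a,\bar p\ra\in\ps\upharpoonright\be,\ B\in\tau\cap M^*,\ \la\bar a,\bar p\ra\Vdash_{\ps\upharpoonright\be}\check B\in\dot{\cal U}\rb.
$$
By the structural observations, every condition occurring in $M^*(\dot{\cal U})$ lies in $M^*\cap(\ps\upharpoonright\al^*)$, so $M^*(\dot{\cal U})$ is (equivalent to) a $\ps\upharpoonright\al^*$-name, and it is forced to be a countable subfamily of $\dot{\cal U}$.

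It remains to show $\la s_1,p_1\ra\Vdash M^*(\dot{\cal U})$ is an open cover of $X$, and here one repeats the proof of Lemma~\ref{lemma:SCAlindelof} in structure: for each $\in$-chain $s$ of side-condition models inside $M^*$ suitably extending the part of $s_1$ below $\bar M$, form the induced $\ps\upharpoonright\al^*$-name $\dot{\cal U}_s$ by projecting off the data not visible below $\al^*$; a density argument shows that any condition generic for the models of $s$ forces $\dot{\cal U}_s$ to be an open cover of $X$ (the analogue of Claim~1 there); then the hypothesis that $\ps\upharpoonright\al^*$ preserves that $X$ is Lindel{\"o}f, together with the elementarity of $M^*[\dot G_{\al^*}]$, yields that $M^*\cap\dot{\cal U}_s$ is also an open cover (the analogue of Claim~2); and combining these as in the last paragraph of that proof, using that $\la s_1,p_1\ra$ computes $\bar M$ correctly, finishes the argument. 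A density argument over $\ps\upharpoonright\be$ then shows that the empty condition already forces $\dot{\cal U}$ to have a countable subcover.

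The main obstacle is adapting the combinatorics from the clean two-sorted setting of Definition~\ref{def:sideaugment} (a base poset together with a finite $\in$-chain of countable models) to the actual anatomy of $\ps\upharpoonright\be$, whose conditions carry side-condition nodes of both small and transitive type as well as a nontrivial working part. The delicate point arises in the ``combining'' step: to amalgamate a condition $\la\bar a,\bar p\ra\in M^*$ forcing $\check B\in\dot{\cal U}$ with an arbitrary extension of $\la s_1,p_1\ra$ one cannot merely union model chains as in Lemma~\ref{lemma:SCAlindelof}, but must use the amalgamation results of Section~\ref{Section:NeemanSequence} (residues over $\bar M$, the small-node amalgamation lemma, and Lemma~\ref{lemma:addtransitivenodesinPoset}) and then verify that the auxiliary transitive nodes and working-part values produced in doing so are already ``seen'' inside $M^*$, so that the resulting witness still belongs to $M^*(\dot{\cal U})$. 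A secondary technical point — largely handled by the choice of $\al^*\in Z^*$ above $\delta$, and requiring a short separate argument when $\cf(\be)=\om$ — is keeping the various genericity statements type-correct, since the relevant dense sets live in $M^*$ while the poset $\ps\upharpoonright\al^*$ does not: here one leans on the equality $M^*\cap(\ps\upharpoonright\al^*)=M^*\cap(\ps\upharpoonright\be)$ and on $H(\al^*)$ being a transitive node.
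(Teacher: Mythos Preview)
Your approach diverges from the paper's in a way that creates a real gap. The paper does \emph{not} fix a single level $\al^*$ above $\de=\sup(M^*\cap\be)$; instead, given an arbitrary extension $\la s_2,f_2\ra$ of $\la s_1,f_1\ra$, it picks a transitive node $W\in\cal{T}$ \emph{inside} $M$ (depending on $s_2$) with $s_2\cap M\in W$, and runs the reflection at that level. The point is that $W\in M$, so the projected name $\dot{\cal U}_{W,s_3\cap M}$ is itself a member of $M^*$, and after applying Lindel{\"o}f preservation at level $W$ one can use elementarity of $M[G_W]$ to find the countable subcover --- and hence the witness $\la s,f\ra$ --- inside $M$. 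The substantial work (the paper's Claim~3) is then to amalgamate this $\la s,f\ra\in M$ with $\la s_3,f_2\ra$, which requires an explicit construction handling new transitive nodes of $s$ above $W$ and extending the working part accordingly.

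Your $\al^*$, by contrast, is chosen above $\de$ and hence \emph{outside} $M^*$. This breaks the argument in two places. First, Lemma~\ref{lemma:localproper} requires the level parameters to lie in the model, so you cannot conclude that the restriction of $\la s_1,p_1\ra$ is $(M^*,\ps\upharpoonright\al^*)$-generic --- indeed, $\ps\upharpoonright\al^*\notin M^*$, so the phrase is not even well-posed. Second, and more seriously, the Claim~2 step needs the projected name $\dot{\cal U}_s$ to be an element of $M^*$ so that its evaluation lands in $M^*[G_{\al^*}]$ and elementarity can be invoked; any definition of $\dot{\cal U}_s$ as a $(\ps\upharpoonright\al^*)$-name involves $\al^*$ and so fails this. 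Replacing $\al^*$ by $\be$ in the definition restores membership in $M^*$ but forfeits the inductive hypothesis, which applies only at levels in $Z^*\cap\be$. The analogy with Lemma~\ref{lemma:SCAlindelof} also breaks down structurally: there the quotient $\cal{M}(\bb{A})/\bb{A}$ is just a finite $\in$-chain with no working part, whereas $\ps\upharpoonright\be$ over $\ps\upharpoonright\al^*$ carries genuine working-part data at levels in $[\al^*,\be)$ that your ``projecting off'' does not control. The fix is exactly what the paper does: let the transitive level vary with the extension and keep it inside $M$.
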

\begin{proof}
    
    Fix a condition $\la s_0,f_0\ra$ in $\bb{P}\upharpoonright\beta$ and a $(\bb{P}\upharpoonright\beta)$-name $\dot{\mathcal{U}}$ for an open cover of $X$. Our goal is to find an extension of $\la s_0,f_0\ra$ which forces in $\bb{P}\upharpoonright\beta$ that $\dot{\mathcal{U}}$ has a countable subcover.

    Let $\kappa>\theta$ be regular, and fix a countable $M^*\prec H(\kappa)$ so that $M^*$ has the following parameters: $J$, ${\bb P}$, $\mathcal{S}$, $\mathcal{T}$, $\la s_0,f_0\ra$, $(X,\tau)$, $\dot{\mathcal{U}}$, $Z$, $\theta$, and $\beta$. Set $M:=M^*\cap H(\theta)$, so that $M\in\mathcal{S}$. Since $s_0\in M$, Lemma \ref{lemma:tInM} implies that the closure of $s_0\cup\lb M\rb$ under intersections is a condition in the model sequence poset extending $s_0$. Let $s_1$ be this condition. Next, by Lemma \ref{lemma:addM}, let $f_1$ be a function so that $\la s_1,f_1\ra$ is a condition extending $\la s_0,f_0\ra$.
    
    We claim that $\la s_1,f_1\ra$ forces the desired conclusion. More precisely, we claim that $\la s_1,f_1\ra$ forces that the following is a countable subcover of $\dot{\mathcal{U}}$:
    $$
    M(\dot{\mathcal{U}}):=\lb(\check{B},\la s,f\ra):B\in M\cap\tau\we\la s,f\ra\in M\cap (\bb{P}\upharpoonright\beta)\we \la s,f\ra\Vdash_{\bb{P}\upharpoonright\beta}\check{B}\in\dot{\mathcal{U}}\rb.
    $$
    Because $M$ is countable,  $M(\dot{\mathcal{U}})$ is countable and hence forced to be a countable subset of $\dot{\mathcal{U}}$. Naturally, it takes some work to show that it's an open \emph{cover}.

    In order to show that $\la s_1,f_1\ra$ forces that $M(\dot{\mathcal{U}})$ is a cover, fix an arbitrary point $x\in X$ and an extension $\la s_2,f_2\ra$ of $\la s_1,f_1\ra$. The proof will proceed by finding a large enough transitive node, say $W$, in $M$. We will argue that we can add $W$ to $\la s_2,f_2\ra$ in a controlled way. Then, we will apply our induction hypothesis to the restriction of $\bb{P}\upharpoonright\beta$ to $W$. This will involve looking at the projection of the name $\dot{\mathcal{U}}$ and applying amalgamation arguments.

    Since $\beta\in\lim(Z^*)$ and  since $s_2\cap M$ is an element of $M\cap H(\beta)$, we may apply the elementarity of $M^*$ to find a node $W\in\mathcal{T}\cap M\cap H(\beta)$ so that $s_2\cap M\in W$. By Lemma \ref{lemma:addTopTransitive}, we may find an extension $s_3$ of $s_2$ in the model sequence poset so that $W\in s_3$ and so that the nodes of $s_3$ above $W$ are precisely the nodes of $s_2$ from $M$ onward. Moreover, we also know from Lemma \ref{lemma:addTopTransitive} that the small nodes of $s_3$ are either small nodes of $s_2$ or are of the form $N\cap W$, where $N\in s_2$ is small. This implies by Remark \ref{remark:condition5initialsegment} that $\la s_3,f_2\ra$ is a condition.
    
    We now build a $(\bb{P}\upharpoonright W)$-name to which we will apply our induction hypothesis. Namely, let $\dot{\mathcal{U}}_{W,s_3\cap M}$ be the $(\bb{P}\upharpoonright W)$-name consisting of all pairs $(\check{B},\la\bar{s},\bar{f}\ra)$ so that
    \begin{enumerate}
        \item $B\in\tau$ and $\la\bar{s},\bar{f}\ra\in \ps\upharpoonright W$, and
        \item there exists $\la s,f\ra\in \ps\upharpoonright\beta$ so that
        \begin{enumerate}
            \item $s_3\cap M\seq s$
            \item $\la s,f\ra\upharpoonright W=\la\bar{s},\bar{f}\ra$, and
            \item $\la s,f\ra\Vdash\check{B}\in\dot{\cal{U}}$.
        \end{enumerate}
    \end{enumerate}
    
    Note that in the definition of $\dot{\mathcal{U}}_{W,s_3\cap M}$, the symbol ``$\check{B}$" is being used equivocally, on the one hand for the canonical $(\bb{P}\upharpoonright W)$-name for $B$ and on the other hand for the canonical $({\bb P}\upharpoonright\beta)$-name for $B$. This will not cause any issues going forwards.
    
    Observe that the name $\dot{\mathcal{U}}_{W,s_3\cap M}$ is a member of $M^*$, being definable in $H(\kappa)$ from parameters in $M^*$.\\

    \textbf{Claim 1:} $\la s_3,f_2\ra\upharpoonright W$ forces in $\bb{P}\upharpoonright W$ that $\dot{\mathcal{U}}_{W,s_3\cap M}$ is an open cover of $X$.\\

    \emph{Proof of Claim 1:} Fix a point $z\in X$ and an extension $\la\bar{t},\bar{g}\ra$ of $\la s_3,f_2\ra\upharpoonright W$ in $\ps\cap W$. By Lemma \ref{lemma:transitivestronglygeneric}, we can find a condition $\la t',g'\ra\in\ps\upharpoonright\beta$ extending $\la s_3,f_2\ra$ and $\la\bar{t},\bar{g}\ra$. By further extension if necessary we may assume that for a specific $B\in\tau$ with $z\in B$, $\la t',g'\ra$ forces that $\check{B}\in\dot{\cal{U}}$. Then the pair $(\check{B},\la t',g'\ra\upharpoonright W)$ is in $\dot{\cal{U}}_{W,s_3\cap M}$. Hence $\la t',g'\ra\upharpoonright W$, which extends $\la s_3,f_2\ra\upharpoonright W$, forces that $\check{B}\in\dot{\cal{U}}_{W,s_3\cap M}$. $\hfill\blacksquare\text{(Claim 1)}$\\

    \textbf{Claim 2:} $\la s_3,f_2\ra\upharpoonright W$ forces that $M\cap \dot{\mathcal{U}}_{W,s_3\cap M}$ is an open cover of $X$.\footnote{Note that we do literally mean $M\cap \dot{\mathcal{U}}_{W,s_3\cap M}$, i.e., all pairs from the name which are also members of $M$.}\\

    \emph{Proof of Claim 2:} Fix $z\in X$ and an extension $\la\bar{t},\bar{g}\ra$ of $\la s_3,f_2\ra\upharpoonright W$ in $\ps\upharpoonright W$. Let $G_W$ be a $V$-generic filter over $\ps\upharpoonright W$ with $\la\bar{t},\bar{g}\ra\in G_W$.
    
    We'd like to apply Lemma \ref{lemma:localproper} to conclude that we have 
    $$
    M[G_W]\cap V=M.
    $$
    Indeed, let the ``$\alpha$" and ``$\beta$" of the statement of that lemma equal $\zeta$, where $W=H(\zeta)$; let ``$\ga$" from that lemma be $\theta$; and let ``$M^*$" from that lemma be equal to $M$. Note that $M\cap W$ is a member of $s_3\cap W$ by the closure of $s_3$ under intersections. Lemma \ref{lemma:localproper} implies that $\la s_3,f_2\ra\upharpoonright W$ is a generic condition for $M$ in $\ps\upharpoonright W$.

    Continuing, we may apply the previous claim to conclude that $$
    \cal{U}_W:=\dot{\cal{U}}_{W,s_3\cap M}[G_W]
    $$ 
    is an open cover of $X$. Moreover, it is in $M[G_W]$. Accordingly, we apply the fact that $X$ is Lindel{\"o}f in $V[G_W]$ and the elementarity of $M[G_W]$ to find a countable subcover $\cal{B}_W$ of $\cal{U}_W$ with $\cal{B}_W\in M[G_W]$. Fix $B\in\cal{B}_W$ with $z\in B$. Note that because $\cal{B}_W\seq M[G_W]$, we know $B\in M[G_W]\cap\tau=M\cap\tau$, where the equality holds because $\tau\subset V$. Applying the elementarity of $M[G_W]$, let $\la\bar{s},\bar{f}\ra$ be a condition in $G_W\cap M[G_W]$ so that $(\check{B},\la\bar{s},\bar{f}\ra)$ is a member of $\dot{\cal{U}}_{W,s_3\cap M}$. Again by properness, we know that $\la\bar{s},\bar{f}\ra$ is in fact in $M$, not just $M[G_W]$. Thus $(\check{B},\la\bar{s},\bar{f}\ra)$ is a member of $M\cap \dot{\cal{U}}_{W,s_3\cap M}$. Since $\la\bar{s},\bar{f}\ra$ is in $G_W$, it is compatible with $\la \bar{t},\bar{g}\ra$, and this suffices to finish the proof of the claim. $\hfill\blacksquare\text{(Claim 2)}$\\

    Now we put everything together, recalling the arbitrary point $x$ fixed in the fourth paragraph of the proof. Using the second claim, we may find 
    \begin{itemize}
        \item an extension $\la\bar{s}_4,\bar{f}_4\ra$ in $\ps\upharpoonright W$ of $\la s_3,f_2\ra\upharpoonright W$;
        \item some $B\in\tau\cap M$ with $x\in B$;
        \item a condition $\la\bar{s},\bar{f}\ra\in M\cap W$ with $\bar{s}\supseteq s_3\cap M$ and with $\la\bar{s}_4,\bar{f}_4\ra$ extending $\la\bar{s},\bar{f}\ra$ in $\bb{P}\upharpoonright W$;
        \item and a condition $\la s,f\ra\in M$ so that 
        $$
        \la s,f\ra\upharpoonright W=\la\bar{s},\bar{f}\ra\text{ and }\la s,f\ra\Vdash\check{B}\in\dot{\mathcal{U}}.
        $$
    \end{itemize}

    We will prove the following claim, which will finish the proof of the proposition.\\

    \textbf{Claim 3}: there is a condition $\la s_4,f_4\ra$ which extends all three of the conditions $\la\bar{s}_4,\bar{f}_4\ra$, $\la s_3,f_2\ra$ and $\la s,f\ra$.\\

    \emph{Proof of Claim 3:} To begin, let $\De_0,\dots,\De_{\delta-1}$ enumerate in $\in$-increasing order the transitive nodes of $s$ above $W$. Note that these are in $M$ since $s\in M$. Because we (most likely) have these new transitive nodes, we will need to add some more models in order to ensure closure under intersections. This will form a new condition $s_4$ in the model sequence poset. After we do this, we will construct a function $f_4$ so that $\la s_4,f_4\ra$ is the desired condition.

    Let $W'$ denote the least transitive node in $s_3$ above $M$ if there is one and $H(\theta)$ otherwise. Define $E_M$ to be the interval $[M,W')$ of small nodes of $s_3$, starting from $M$ and below $W'$, and let  $M=M_0\in\dots\in M_{l-1}$ be the $\in$-increasing enumeration  of $E_M$. For each $\ga<\de$, let 
    $$
    F_\ga:=\lb M_j\cap\Delta_\ga:j<l\rb,
    $$
    an increasing sequence of small nodes. Form a precondition $s'_4$ in the model sequence poset by taking $\bar{s}_4$ followed by the tail segment of $s$ starting with $W$, followed by the tail segment of $s_3$ starting from $M$, that is to say
    $$
    s'_4:=\bar{s}_4\cup(s\bsl W)\cup (s_3\bsl M).
    $$
    Observe that for each $\ga<\de$, the node of $s'_4$ which is directly below $\De_\ga$ is a node of $s$. Observe as well that $s'_4$ contains all three of the conditions $\bar{s}_4$, $s$, and $s_3$. Let $s_4$ be the sequence obtained by adding, for each $\ga<\de$, the nodes of $F_\ga$ to $s'_4$, directly below $\De_\ga$.

    We now argue that $s_4$ is a condition in the model sequence poset. We first show that $s_4$ is $\in$-increasing. Since, for each $\ga<\de$, $F_\ga$ is $\in$-increasing and since $s'_4$ is $\in$-increasing, it suffices to verify that the new set is $\in$-increasing at the boundaries of the new $F_\ga$ intervals. Fix $\ga<\de$. On the one hand, the top node of $F_\ga$ is $M_{l-1}\cap\De_\ga$, which is in $\De_\ga$. On the other hand, the bottom node of $F_\ga$ is $M\cap\De_\ga$. However, if $K$ is the top node of the above precondition directly below $\De_\ga$, then $K$ is in fact the top node of $s$ below $\De_\ga$. Consequently, $K\in M$ because $s\in M$ is finite. Hence $K\in M\cap\De_\ga$.

    Our second claim is that $s_4$ is closed under intersections. By Lemma \ref{lemma:weakclosure}, it suffices to show that if $W^*$ and $K$ are nodes in $s_4$ of transitive and small type respectively, and if $W^*\in K$, then $W^*\cap K\in s_4$. We have a number of cases based upon where $W^*$ lives.

    Case 1: $W^*\in\bar{s}_4$. Since $\bar{s}_4\seq W$, this implies that $W^*$ is a member of $W$ and hence that $W^*\seq W$. We take subcases based on where $K$ lives:
    \begin{itemize}
        \item If $K\in\bar{s}_4$, then we're done because $\bar{s}_4$ is closed under intersections. 
        \item If $K\in s_3$, then because $W\in s_3$ as well, we have that $K\cap W\in s_3\cap W\seq\bar{s}_4$. Since $W^*\in\bar{s}_4$, we conclude that $K\cap W^*=(K\cap W)\cap W^*\in\bar{s}_4$.
        
        \item If $K\in s$, then $K\cap W\in s\cap W=\bar{s}\seq\bar{s}_4$. Then continue as in the previous subcase.
        \item The final subcase is that $K$ is a new small node, i.e., of the form $K=M_j\cap\De_\ga$ for some $j<l$ and $\ga<\de$. Then $K\cap W=(M_j\cap\De_\ga)\cap W=M_j\cap W$, where the second equality holds because $W\in\De_\ga$ and $\De_\ga$ is transitive. Since $M_j$ and $W$ are both in $s_3$, we conclude $K\cap W\in s_3\cap W\seq\bar{s}_4$. Then $K\cap W^*=(K\cap W)\cap W^*\in\bar{s}_4$ as well, completing the proof in this subcase.
    \end{itemize}

Case 2: $W^*=W$. Since $W\in K$, the node $K$ must occur in $s$, or in $s_3$, or be one of the new nodes added. We take subcases based upon these possibilities. 
\begin{itemize}
    \item Suppose $K\in s$. Since $W\in s_3\cap M\seq s$, we know that $W=W^*$ and $K$ are both in $s$, so their intersection is in $s\seq s_4$.
    \item If $K\in s_3$, then as $W^*=W$ is in $s_3$ as well, we conclude that $W\cap K\in s_3\seq s_4$.
    \item If $K$ is a new small node, then $K=M_j\cap\De_\ga$ for some $j<l$ and $\ga<\de$. Then $K\cap W=(M_j\cap\De_\ga)\cap W=M_j\cap W$. As $M_j\cap W$ is in $s_3$, this finishes the proof in this subcase.
\end{itemize}

Case 3: $W^*=\De_\ga$ for some $\ga<\de$. As in Case 2, $K$ must then be a member of $s$, or a member of $s_3$, or a new small node.
\begin{itemize}
    \item $K\in s$. Then as $\De_\ga\in s$, apply the closure of $s$ under intersections.
    \item $K$ occurs at or above $M$ in $s_3$. Then $K\cap W'\in E_M$, and so $K\cap\De_\ga=(K\cap W')\cap\De_\ga\in F_\ga\seq s_4$.
    \item $K$ is a new small node, say $K=M_j\cap\De_{\ga^*}$ for some $\ga^*<\de$. Since $\De_\ga\in K\seq\De_{\ga^*}$ and since $\De_{\ga^*}$ is transitive, we know $\De_\ga\seq\De_{\ga^*}$. So $K\cap\De_\ga=(M_j\cap\De_{\ga^*})\cap\De_\ga=M_j\cap\De_\ga\in F_\ga\seq s_4$.
\end{itemize}

Case 4: $W^*$ occurs in $s_3$ above $M$. Then $K$ also occurs in $s_3$ since $K$ is above $W^*$, and we are done by applying the closure of $s_3$ under intersections.\\

This completes the construction of the condition $s_4$ in the model sequence poset. Now we construct the function $f_4$. The domain of $f_4$ will be the union of the domains of $f_2$, $f$, and $\bar{f}_4$. $f_4$ below $W$ will equal $\bar{f}_4$, and $f_4$ above $M$ will equal $f_2$. The remaining transitive nodes of $s_4$, in order, are $W$ followed by the $\De_\ga$. Since these are all elements of $s$ (recall $W\in s_3\cap M\seq s$), they are potentially elements of the domain of $f$ (recall that condition (2) of Definition \ref{def:PFAiteration} does not demand that the domain of the working part include all of the transitive nodes present in the first coordinate). For the nodes among these that are in the domain of $f$, we will need to strengthen the corresponding condition in the range of $f$.

First we deal with $W$. If $W\notin\dom(f)$, then $W$ will not be in $\dom(f_4)$. On the other hand, if $W\in\dom(f)$, then $f(W)$ is forced by $\la s,f\ra\upharpoonright W=\la\bar{s},\bar{f}\ra$ to be a generic condition for the small nodes of $s$ between $W$ and $\De_0$ (the next transitive node of $s$). Let $K_0$ be the top node of $s$ directly below $\De_0$, and for non-triviality, we suppose that $K_0$ is small. Then we added $M_0\cap\De_0,\dots,M_{l-1}\cap\De_0$ above $K_0$ and below $\De_0$. Since $\la s,f\ra$ is a member of $M$, so is the $(\ps\upharpoonright W)$-name $f(W)$. Thus $f(W)\in M\cap\De_0=M_0\cap\De_0$, the node of $s_4$ directly above $K_0$. By Lemma \ref{lemma:genericforincreasingsequence}, we may find a $(\ps\upharpoonright W)$-name $f_4(W)$ extending $f(W)$ which is forced to be generic for all of the $M_j\cap\De_0$. This completes the definition of $f_4(W)$.

Now we deal with an arbitrary $\De_\ga$ in $\dom(f)$. First suppose that $\De_{\ga+1}$ exists (equivalently, that $\ga+1<\de$). Let $K_\ga$ be the top node of $s$ below $\De_{\ga+1}$, where we assume for non-triviality that $K_\ga$ is small. Then $\la s,f\ra\upharpoonright\De_\ga$ forces that $f(\De_\ga)$ is generic for all of the small nodes of $s$ in the interval $(\De_\ga,K_\ga]$. As in the previous paragraph, $f(\De_\ga)\in M\cap\De_{\ga+1}$, which is the smallest node of $s_4$ directly above $K_\ga$. We again apply Lemma \ref{lemma:genericforincreasingsequence} to find a name $f_4(\De_\ga)$ extending $f(\De_\ga)$ which is generic for each of the $M_j\cap\De_{\ga+1}$. On the other hand, if $\De_{\ga+1}$ does not exist, i.e., if $\ga=\de-1$, then $\De_\ga$ is the topmost transitive node of $s$. Let $K_\ga$ be the topmost node of $s$, where we again assume that $K_\ga$ is small for the sake of non-triviality. Note that $M$ is the next node of $s_4$. Since $f(\De_\ga)$ is a member of $M$, we yet again apply Lemma \ref{lemma:genericforincreasingsequence} to find an extension $f_4(\De_\ga)$ of $f(\Delta_\gamma)$ which is generic for each of the models (all of which are small) in the interval $[M,W')$ of $s_4$ (equivalently, of $s_3$).

This completes the definition of the pair $\la s_4,f_4\ra$. By Lemma \ref{lemma:genericforInterval}, it is a condition. $\hfill\blacksquare\text{(Claim 3)}$\\

Now we know that $\la s_4,f_4\ra$ is a condition extending $\la\bar{s}_4,\bar{f}_4\ra$, $\la s_3,f_2\ra$, and $\la s,f\ra$. Since $\la s_4,f_4\ra$ extends $\la s,f\ra$, it forces that $\check{B}\in\dot{\cal{U}}$. The condition $\la s_4,f_4\ra$ also forces that $\check{B}\in M(\dot{\cal{U}})$, with $\la s,f\ra$ as a witness. Since $x$ was an arbitrary point in $X$ and since $\la s_4,f_4\ra$ extends $\la s_3,f_2\ra$ which in turn extends $\la s_1,f_1\ra$, this completes the proof that $\la s_1,f_1\ra$ forces that $M(\dot{\cal{U}})$ is a cover of $X$. This completes the proof of the proposition.
\end{proof}

The following proposition deals with successor cases that are not covered by the machinery from Sections \ref{Section:EmbeddingLemmas} and \ref{Section:SCAPreservation}. The proof is quite similar to the proof of the previous proposition.

\begin{proposition}\label{prop:weirdsuccessorcaseLindelof}
    Suppose that $\be\in Z^*\bsl\lim(Z^*)$ and that $\al:=\sup(Z^*\cap\be)\notin Z^*$. Suppose further that for all $\bar{\al}<\al$ with $\bar{\al}\in Z^*$, $\ps\upharpoonright\bar{\alpha}$ preserves that $X$ is Lindel{\"o}f. Then $\ps\upharpoonright\beta$ preserves that $X$ is Lindel{\"o}f.
\end{proposition}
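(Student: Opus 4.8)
The plan is to follow the proof of Proposition~\ref{prop:limitcaselindelof} almost verbatim, with the poset $\ps\upharpoonright\al$ (for $\al:=\sup(Z^*\cap\be)$) taking over the role that $\ps\upharpoonright W$ played there; the one genuinely new ingredient is a preliminary reduction. Since $\al\notin Z^*$, the set $Z^*\cap\be=Z^*\cap\al$ has no largest element, so $\al=\sup(Z^*\cap\al)$, and, as $Z$ is closed, $\al\in Z$; hence $\al$ is a limit point of $Z^*$ lying in $Z$, and the hypothesis of Proposition~\ref{prop:limitcaselindelof} (with $\al$ in the role of ``$\be$'') is exactly our hypothesis that $\ps\upharpoonright\bar\al$ preserves that $X$ is Lindel\"of for all $\bar\al\in Z^*\cap\al$. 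Thus $\ps\upharpoonright\al$ itself preserves that $X$ is Lindel\"of, and this is the only fact about the ``tail below $\al$'' that the rest of the argument uses.

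Granting this, I would rerun the skeleton of the proof of Proposition~\ref{prop:limitcaselindelof}. Fix $\la s_0,f_0\ra\in\ps\upharpoonright\be$ and a $(\ps\upharpoonright\be)$-name $\dot{\mathcal{U}}$ for an open cover of $X$ by basic open sets; fix a regular $\kappa>\theta$ and a countable $M^*\prec H(\kappa)$ with all the relevant parameters (now including $\al$), and set $M:=M^*\cap H(\be)$, noting $M\in\mathcal{S}$ and, since $\cf(\be)>\om$ makes $H(\be)$ countably closed, $M\in H(\be)$. Using Lemmas~\ref{lemma:tInM} and~\ref{lemma:addM}, extend $\la s_0,f_0\ra$ to a condition $\la s_1,f_1\ra$ with $M\in s_1$, and take as candidate subcover
$$
M(\dot{\mathcal{U}}):=\lb(\check{B},\la s,f\ra):B\in M\cap\tau\we\la s,f\ra\in M\cap(\ps\upharpoonright\be)\we\la s,f\ra\Vdash\check{B}\in\dot{\mathcal{U}}\rb,
$$
which $\la s_1,f_1\ra$ forces to be a countable subset of $\dot{\mathcal{U}}$ because it is a subset of the countable set $M$. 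To show it is forced to be a cover, fix $x\in X$ and an arbitrary $\la s_2,f_2\ra\le\la s_1,f_1\ra$. Since $\dom(f_2)\seq Z^*\cap\be=Z^*\cap\al$, the restriction $\la s_2,f_2\ra\upharpoonright\al=\la s_2\cap H(\al),f_2\ra$ is a condition in $\ps\upharpoonright\al$, and it plays here the role that $\la s_3,f_2\ra\upharpoonright W$ played in the limit case: force below it with a generic $G_\al$ over $\ps\upharpoonright\al$, project $\dot{\mathcal{U}}$ to the $(\ps\upharpoonright\al)$-name $\dot{\mathcal{U}}_{\al,\res_M(s_2)}$ consisting of the pairs $(\check{B},\la\bar s,\bar f\ra)$ with $B\in\tau$, $\la\bar s,\bar f\ra\in\ps\upharpoonright\al$, and for which there is $\la s,f\ra\in\ps\upharpoonright\be$ with $\res_M(s_2)\seq s$, $\la s,f\ra\upharpoonright\al=\la\bar s,\bar f\ra$, and $\la s,f\ra\Vdash\check{B}\in\dot{\mathcal{U}}$ (note $\res_M(s_2)\in M\seq M^*$, so this name is definable in $H(\kappa)$ from parameters in $M^*$), and run the two claims of the limit-case proof: first that $\la s_2,f_2\ra\upharpoonright\al$ forces $\dot{\mathcal{U}}_{\al,\res_M(s_2)}$ to be an open cover of $X$, and then --- using that $\ps\upharpoonright\al$ preserves that $X$ is Lindel\"of and the genericity of $\la s_2,f_2\ra\upharpoonright\al$ over $\ps\upharpoonright\al$ with respect to $M$ --- that $M\cap\dot{\mathcal{U}}_{\al,\res_M(s_2)}$ is an open cover. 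From the latter I extract $B\in M\cap\tau$ with $x\in B$ together with a witnessing $\la s,f\ra\in M$ forcing $\check{B}\in\dot{\mathcal{U}}$, and finish by amalgamating $\la s_2,f_2\ra$, $\la s,f\ra$, and the generic extension below $\al$ into one condition forcing $\check{B}\in M(\dot{\mathcal{U}})$, as in Claim~3 there.

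The main obstacle, and the only real departure from Proposition~\ref{prop:limitcaselindelof}, is that since $\be\notin\lim(Z^*)$ there is no transitive node of $\cal T$ strictly between $\al$ and $\be$, so the ``high'' small nodes of $s_2$ --- those not lying in $H(\al)$, which must be carried along unchanged through the restriction to $\ps\upharpoonright\al$ --- cannot be capped by a transitive node as in the limit case. This affects two steps. First, the genericity of $\la s_2,f_2\ra\upharpoonright\al$ as an appropriate generic condition over $\ps\upharpoonright\al$ is the delicate point: since $\cf(\al)=\om$ there is no transitive node of $\cal T$ available to anchor the residue of $M$, so Lemma~\ref{lemma:localproper} cannot be invoked verbatim, and one argues instead through the direct-limit presentation $\ps\upharpoonright\al=\bigcup_{\ga\in Z^*\cap\al}\ps\upharpoonright\ga$ together with Neeman's properness-for-small-nodes analysis (Lemma~6.12 of~\cite{Neemanttm} and the arguments surrounding it), which is insensitive to the cofinality of the truncation point. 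Second, the amalgamation of $\la s_2,f_2\ra$ with the witness $\la s,f\ra$ must merge their tails of small nodes above $\al$ into a single $\in$-increasing, intersection-closed sequence; but this is in fact \emph{easier} than Claim~3 of the limit case, because the witness $\la s,f\ra$, being a condition in $\ps\upharpoonright\be$ with $\be\notin\lim(Z^*)$, has no transitive nodes above $\al$, so there are no analogues of the nodes $\De_\ga$ or of the interpolating sets $F_\ga$; one only needs to adjoin the intersections of the tail nodes of $s_2$ with the (necessarily sub-$\al$) transitive nodes occurring in the amalgam of the $\upharpoonright\al$ parts and then verify via Lemma~\ref{lemma:weakclosure} that the result is a condition extending all three pieces. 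Moreover, one may simply take the witness $\la s,f\ra$ inside $M\cap(\ps\upharpoonright\al)$, i.e.\ with empty tail, since $\dot{\mathcal{U}}_{\al,\res_M(s_2)}$ already existentially quantifies the tail away; then only the tail of $s_2$ is in play and the amalgamation reduces to adjoining a single base extension below $\al$ beneath that fixed tail.
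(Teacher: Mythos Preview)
Your strategy of projecting to $\ps\upharpoonright\al$ and invoking Proposition~\ref{prop:limitcaselindelof} at $\al$ is genuinely different from the paper's, and it has a real gap that your acknowledgments do not close. The paper does \emph{not} project to $\al$ at all; instead it first extends $\la s_2,f_2\ra$ to $\la s_3,f_3\ra$ so that the top node of $s_3$ inside $H(\al)$ is a \emph{transitive} node $Q=H(\de_1)\in\cal T$ with $\de_1\in Z^*\cap\al$ and $Q\in M$ (using Lemma~\ref{lemma:addTopTransitive} to locate $\de_1\in M_0$ above the rank of $s_2\cap H(\al)$, where $M_0$ is the least node of $s_2$ outside $H(\al)$). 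It then projects to $\ps\upharpoonright Q$. This buys exactly the two things your approach is missing: Lemma~\ref{lemma:transitivestronglygeneric} gives the amalgamation needed in Claim~1, and since $M\cap Q\in s_3\cap Q$ by closure under intersections, Lemma~\ref{lemma:localproper} applies verbatim for Claim~2. The induction hypothesis is used only at the single stage $\de_1\in Z^*\cap\al$; no appeal to preservation at $\al$ itself is needed.

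By contrast, your projection to $\ps\upharpoonright\al$ fails at both steps. For Claim~1 you need every extension $\la\bar t,\bar g\ra$ of $\la s_2\cap H(\al),f_2\ra$ in $\ps\upharpoonright\al$ to be compatible with $\la s_2,f_2\ra$; but since $H(\al)\notin\cal T$, there is no transitive node separating $\bar t$ from the interval $M_0\in\dots\in M$ of small nodes of $s_2$ above $H(\al)$, and an arbitrary top node of $\bar t$ need not lie in $M_0$, so the $\in$-chain condition can fail. For Claim~2 you need $\la s_2,f_2\ra\upharpoonright\al$ to be $(M,\ps\upharpoonright\al)$-generic, but $H(\al)\notin s_2$ means $M\cap H(\al)\notin s_2\cap H(\al)$, so neither Lemma~\ref{lemma:localproper} nor your direct-limit gesture supplies this; and since $\cf(\al)=\om$, one has $M\cap H(\al)\notin H(\al)$, so it cannot be a side-condition node at all. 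Finally, your last-paragraph simplification is inconsistent with your own definition: $\res_M(s_2)$ contains the small nodes $M_0,\dots,M_{k-1}$ (all in $M$, none in $H(\al)$), so any witness $\la s,f\ra$ with $\res_M(s_2)\subseteq s$ has a nonempty tail above $\al$ and cannot lie in $\ps\upharpoonright\al$.
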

\begin{proof}
    Fix a $(\ps\upharpoonright\beta)$-name $\dot{\cal{U}}$ for an open cover of $X$ and a condition $\la s_0,f_0\ra$ in $\ps\upharpoonright\beta$. We will find an extension of $\la s_0,f_0\ra$ and a name for a countable subset of $\dot{\cal{U}}$ forced by that extension to be a cover of $X$.

    Let $\ka>\theta$ be regular and $M^*\prec H(\kappa)$ a countable elementary submodel containing the following parameters: $\ps$, $J$, $\cal{S}$, $\cal{T}$, $(X,\tau)$, $\la s_0,f_0\ra$, $\be$, $\dot{\cal{U}}$, $Z^*$, and $\theta$, and define $M:=M^*\cap H(\be)$, noting that $M\in\cal{S}$. Apply Lemma \ref{lemma:addM} to find an extension $\la s_1,f_1\ra$ of $\la s_0,f_0\ra$ with $M\in s_1$. We claim that $\la s_1,f_1\ra$ forces that the following is a countable subcover of $\dot{\cal{U}}$:
    $$
    M(\dot{\cal{U}}):=\lb(\check{B},\la t,g\ra):B\in M\cap\tau\we\la t,g\ra\in M\cap (\ps\upharpoonright\beta)\we\la t,g\ra\Vdash\check{B}\in\dot{\cal{U}}\rb.
    $$
    We show that no extension of $\la s_1,f_1\ra$ can force a specific counterexample. Towards this end, fix an extension $\la s_2,f_2\ra$ of $\la s_1,f_1\ra$ and a specific $x\in X$, and we find a further extension of $\la s_2,f_2\ra$ which forces $x\in\bigcup M(\dot{\cal{U}})$.

    Our first step will be to extend $\la s_2,f_2\ra$ to a condition $\la s_3,f_3\ra$ whose top node below $H(\al)$ is transitive, and so that this transitive node is not in $\dom(f_3)$. While not absolutely necessary, finding such a condition will smooth out the details later. To begin, if $\la s_2,f_2\ra$ already has this property, i.e., if $\max(s_2\cap H(\al))$ is transitive and is not in $\dom(f_2)$, then we set $\la s_3,f_3\ra=\la s_2,f_2\ra$. So suppose otherwise. Observe that since $M\in s_2\setminus H(\al)$, we may define $M_0:=\min(s_2\setminus H(\al))$, which may be $M$ itself. We claim that $M_0$ is a small node. Otherwise, $M_0=H(\bar{\de})$ for some $\bar{\de}\in Z^*$. Since $M_0\notin H(\al)$, $\bar{\de}\geq\al$. But $\bar{\de}\neq\al$, since $\al\notin Z^*$. Thus $\bar{\de}\geq\be$, because $\be=\min(Z^*\bsl\al)$. This contradicts $s_2\seq H(\be)$.

    Now we know that $M_0$ is a small node. Let $\de_0$ be the least element of $Z^*\cap\al$ so that $s_2\cap H(\al)\in H(\de_0)$. Note that $M_0\not\seq H(\de_0)$, since if it were, that would imply $M_0\in H(\al)$. Then by Lemma \ref{lemma:addTopTransitive}, we may define $\de_1:=\min(M_0\setminus\de_0)$, so that $\de_1\in Z^*$, and we may find a condition $s_3$ in the model sequence poset which extends $s_2$ and which has the following properties:
    \begin{itemize}
        \item $s_3$ contains $H(\de_1)$;
        \item all new nodes in $s_3$ are either transitive or of the form $N\cap W$ for $N\in s_2$ and $W\in\cal{T}$;
        \item the nodes of $s_3$ above $H(\de_1)$ are exactly the nodes of $s_2\setminus H(\delta_1)$.
    \end{itemize}

    Note that $\la s_3,f_2\ra$ is a condition by Remark \ref{remark:condition5initialsegment} and by the second bullet point above. For uniformity of notation, we set $f_3:=f_2$, so that we've now obtained the condition $\la s_3,f_3\ra$.

    One more bit of notation and an observation: set $Q$ to be the top transitive node of $s_3$. We note that $Q\in M$. Indeed, let $M_0,\dots,M_k=M$ enumerate in $\in$-increasing order the nodes of $s_2$ (equivalently, in this case, $s_3$) from $M_0$ to $M$. Each of these nodes is small, and hence $M_0\in M$. This and the countability of $M_0$ imply that $M_0\seq M$. Since $Q\in M_0$, we conclude that $Q\in M$.\\

    Let $\dot{\cal{U}}_{Q,s_3\cap M}$ be the $(\ps\upharpoonright Q)$-name consisting of pairs $(\check{B},\la\bar{t},\bar{g}\ra)$ so that
    \begin{enumerate}
        \item $B\in\tau$ and $\la\bar{t},\bar{g}\ra\in\ps\upharpoonright Q$ and
        \item there exists a condition $\la t,g\ra\in\ps\upharpoonright\beta$ so that
        \begin{enumerate}
            \item $\la t,g\ra\upharpoonright Q=\la\bar{t},\bar{g}\ra$
            \item $t$ extends $s_3\cap M$ in the model sequence poset
            \item the condition $\la t,g\ra$ forces that $\check{B}\in\dot{\cal{U}}$.
        \end{enumerate}
    \end{enumerate}

    \textbf{Claim 1:} 
    $\la s_3\cap Q,f_3\ra$ (which is a condition in $\ps\upharpoonright Q$) forces that $\dot{\cal{U}}_{Q,s_3\cap M}$ is an open cover of $X$.\\

    \emph{Proof of Claim 1:} Fix a point $z\in X$ and an extension $\la\bar{t},\bar{g}\ra$ of $\la s_3\cap Q,f_3\ra$ in $\ps\upharpoonright Q$. We find a further extension that forces $z$ to be in $\bigcup\dot{\cal{U}}_{Q,s_3\cap M}$.

    Since $\la\bar{t},\bar{g}\ra$ extends $\la s_3\cap Q,f_3\ra=\la s_3,f_3\ra\upharpoonright Q$, Lemma \ref{lemma:transitivestronglygeneric} implies that $\la\bar{t},\bar{g}\ra$ and $\la s_3,f_3\ra$ are compatible in $\ps\upharpoonright\beta$. Accordingly, let $\la t',g'\ra$ be a condition in $\ps\upharpoonright\beta$ extending both. By extending further if necessary, we can find a specific $B\in\tau$ with $z\in B$ so that $\la t',g'\ra$ forces that $\check{B}\in\dot{\cal{U}}$. Then the condition $\la t',g'\ra\upharpoonright Q$, which extends $\la\bar{t},\bar{g}\ra$,  forces that $\check{B}\in\dot{\cal{U}}_{Q,s_3\cap M}$, since $(\check{B},\la t',g'\ra\upharpoonright Q)$ is a member of $\dot{\cal{U}}_{Q,s_3\cap M}$. Since $z$ and $\la\bar{t},\bar{g}\ra$ were arbitrary, this completes the proof of the claim.
    \hfill$\blacksquare$(Claim 1)\\

    We now make a familiar move by considering the $(\ps\upharpoonright Q)$-name $M\cap\dot{\cal{U}}_{Q,s_3\cap M}$.\\

    \textbf{Claim 2:} The condition $\la s_3\cap Q,f_3\ra$ forces that $M\cap\dot{\cal{U}}_{Q,s_3\cap M}$ is an open cover of $X$.\\

    \emph{Proof of Claim 2:} Let $G_Q$ be a $V$-generic filter over $\ps\upharpoonright Q$ containing $\la s_3\cap Q,f_3\ra$, and let $y\in X$ be an arbitrary point. Then $M[G_Q]$ satisfies that $\cal{U}_{Q,s_3\cap M}$ is an open cover of $X$. Since $X$ is Lindel{\"o}f in $V[G_Q]$, we may apply the elementarity of $M[G_Q]$ to find a countable subcover $\cal{B}$ with $\cal{B}\in M[G_Q]$. Fix $B\in\cal{B}$ with $y\in B$. By the elementarity of $M[G_Q]$, let $\la\bar{t},\bar{g}\ra\in G_Q\cap M[G_Q]$ be a condition so that $(\check{B},\la\bar{t},\bar{g}\ra)\in\dot{\cal{U}}_{Q,s_3\cap M}$. By the properness guaranteed by Lemma \ref{lemma:localproper}, $\la\bar{t},\bar{g}\ra$ and $B$ are both in $M$, and hence $(\check{B},\la\bar{t},\bar{g}\ra)$ is in $M\cap\dot{\cal{U}}_{Q,s_3\cap M}$. This completes the proof. \hfill $\blacksquare$ (Claim 2).\\

    In light of the previous claim, we can find some $(\check{B},\la\bar{t},\bar{g}\ra)\in M\cap\dot{\cal{U}}_{Q,s_3\cap M}$ so that $\la s_3\cap Q,f_3\ra$ is compatible with $\la\bar{t},\bar{g}\ra$ and so that $B$ contains $x$ (the specific point fixed towards the beginning of the proof). Since  $(\check{B},\la\bar{t},\bar{g}\ra)$ is in $M$, we can find $t$ and $g$ in $M$ which witness that $(\check{B},\la\bar{t},\bar{g}\ra)$ is in $\dot{\cal{U}}_{Q,s_3\cap M}$.

    To finish the proof, we will show that $\la s_3,f_3\ra$ and $\la t,g\ra$ are compatible. Let $\la\bar{s}^*,\bar{f}^*\ra$ be a condition in $\ps\upharpoonright Q$ which extends $\la s_3\cap Q,f_3\ra$ and $\la\bar{t},\bar{g}\ra$. Let $s_4'$ be the sequence $\bar{s}^*$ followed by all of the nodes of $t$ starting from $Q$, then followed by all of the nodes of $s_3$ starting from $M$. In other words,
    $$
    s'_4:=\bar{s}^*\cup (t\bsl Q)\cup (s_3\bsl M).
    $$
    Then $s'_4$ is $\in$-increasing (i.e., a precondition) since all of the nodes of $\bar{s}^*$ are in $Q$ and since all of the nodes of $t$, and in particular those of $t\bsl Q$, are in $M$. Observe that both $s_3$ and $t$ are subsets of $s'_4$: $t$ is a subset because $t\cap Q=\bar{t}$ and $\bar{s}^*$ extends $\bar{t}$. On the other hand, $s_3$ is a subset of $s'_4$ because $s_3=(s_3\cap Q)\cup\lb Q\rb\cup (s_3\bsl M)$, and also since $s_3\cap Q\seq\bar{s}^*$ and $Q\in t$.

    We next add nodes to close $s'_4$ under intersections. Let $M=M_k\in\dots\in M_{k^*}$ list the nodes of $s_3$ starting from $M$ onward (these are also the nodes of $s_2$ from $M$ onward). Each of these nodes is a small node. Furthermore, let $Q=Q_0,\dots,Q_l$ list the transitive nodes of $t$ starting from $Q$ in $\in$-increasing order. For each $j$ with $1\leq j\leq l$ (i.e., we're ignoring $Q_0$), we add the sequence $M_k\cap Q_j,\dots,M_{k^*}\cap Q_j$ to $s'_4$ directly below $Q_j$. Let $s_4$ denote this sequence of nodes. Then $s_4$ is also $\in$-increasing. Indeed, for each $j$ with $1\leq j\leq l$, the max node of $s'_4$ below $Q_j$ is a node of $t$, and this node is in $M\cap Q_j$. However, $M\cap Q_j$ is the bottom node of the sequence $\la M_i\cap Q_j:k\leq i\leq k^*\ra$ that we added to $s'_4$. By using Lemma \ref{lemma:weakclosure}, one can check that $s_4$ is closed under intersections.

    We finish the proof of this proposition by defining a working part $f_4$. We will do so by taking $\bar{f}^*$ below $Q$ and by extending $g$ at and above $Q$. To wit, let $Q'_0,\dots,Q'_m$ enumerate the nodes of $\dom(g)\setminus Q$; note that perhaps $Q=Q'_0$. Since $\la t,g\ra$ is a condition, for each $i\leq m$ we know that $\la t,g\ra\upharpoonright Q'_i$ forces that $g(Q'_i)$ is a generic condition in $J(Q'_i)$ for each $N\in t$ with $Q'_i\in N$. Since $\la t,g\ra$ is a member of $M$, we may apply Lemma \ref{lemma:genericforincreasingsequence} to each $i\leq m$ and find a $(\ps\upharpoonright Q'_i)$-name $\dot{u}_i$ so that $\la t,g\ra\upharpoonright Q'_i$ forces $\dot{u}_i\leq_{J(Q'_i)}g(Q'_i)$ and forces that $\dot{u}_i$ is a generic condition for $M_j$ for each $k\leq j\leq k^*$ (recall that $M_k=M$).

    Let $f_4$ be the function $\bar{f}^*\cup\lb Q'_i\mapsto\dot{u}_i:i\leq m\rb$. We now see that $\la s_4,f_4\ra$ is a condition, using Lemma \ref{lemma:genericforInterval} to secure part (5) of Definition \ref{def:PFAiteration}.

\end{proof}

\subsection{Putting it All Together}

We are now ready to prove our first main theorem:

\begin{theorem}\label{theorem:Lindelof}
    Under Assumption \ref{GroundModelassumption}, suppose that $X$ is a Lindel{\"o}f space. Then the poset $\ps_L(X)$, where ``$L$" denotes ``Lindel{\"o}f", forces that $\pfa_L(X)$ holds.
\end{theorem}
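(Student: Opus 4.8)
The plan is to assemble the pieces in the standard way for such consistency theorems. Write $\ps:=\ps_L(X)$ and recall from Lemma~\ref{lemma:cardinalstructure} that $\ps$ is proper, preserves $\om_1$, collapses all cardinals in $(\om_1,\theta)$ to $\om_1$, and forces $\theta=\aleph_2$. First I would establish by induction along $\be\in Z^*\cup\lim(Z^*)$ the statement ``if $\ps\upharpoonright\al$ preserves that $X$ is Lindel{\"o}f for every $\al\in Z^*\cap\be$, then $\ps\upharpoonright\be$ preserves that $X$ is Lindel{\"o}f,'' using the case division announced at the start of Section~\ref{Section:PFAL}: the base case $\be=\min(Z^*)$ follows from \cite{GiltonHolshouser} since $\ps\upharpoonright\be$ is strongly proper for stationarily many countable models (this is part of what Section~\ref{Section:Poset} gives us; $\ps\upharpoonright\be$ has size $\le\al$ under $\gch$); the limit case is Proposition~\ref{prop:limitcaselindelof}; Subcase~3(a) follows from Proposition~\ref{prop:denseembedding}, which exhibits a dense embedding of (a dense subset of) $\ps\upharpoonright\be$ into $(\ps\upharpoonright\al)\ast\dot{\cal M}_\al(J'(\al))$, together with Lemma~\ref{lemma:SCAlindelof} applied in $V^{\ps\upharpoonright\al}$ (here one uses the inductive hypothesis that $\ps\upharpoonright\al$ preserves Lindel{\"o}fness of $X$, and, in the case $J'(\al)=J(\al)$, the defining condition (2b) that $J(\al)$ itself preserves it, so that the two-step iteration $\ps\upharpoonright\al$-then-$J'(\al)$ preserves it, and then the $\cal M$-augmentation of $J'(\al)$ does too); Subcase~3(b) is Proposition~\ref{prop:weirdsuccessorcaseLindelof}. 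Taking $\be=\theta$ (a limit point of $Z^*$) yields that $\ps=\ps_L(X)$ itself preserves that $X$ is Lindel{\"o}f, and, since each $\ps\upharpoonright\al$ likewise does, the Laver function $J$ can legitimately ``see'' posets of the right kind.

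Next I would run the standard Laver-function bookkeeping argument. Fix a $V$-generic filter $G$ over $\ps$ and work in $V[G]$, where $\theta=\aleph_2$ and $X$ still has the topology generated by $\tau$ and is still Lindel{\"o}f (by the preservation statement just proved). Let $\dot\Q$ be a $\ps$-name for a proper poset which preserves that $X$ is Lindel{\"o}f, and let $\la \dot D_\al:\al<\om_1\ra$ name a sequence of dense subsets; by properness of $\ps$ and a chain-condition/absorption argument I may assume $\dot\Q$ is (forced to be) a poset on some ordinal $<\theta$ and that the relevant hereditary sets are captured in $H(\theta)$. By supercompactness of $\theta$ and the Laver property of $J$, there is $\al\in Z^*$ with $J(\al)$ a $\ps\upharpoonright\al$-name for $\dot\Q$ (reflected appropriately), and $\al$ can be chosen so that $\ps\upharpoonright\al$ forces that $J(\al)$ is proper, preserves Lindel{\"o}fness of $X$, and is proper for all models $M[\dot G_\al]$ with $M\in\cal S$, $\al\in M$ --- i.e., so that $J'(\al)=J(\al)$ and conditions (2b), (2c) of Definition~\ref{def:PFAiteration} hold. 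Here I must check the two genericity clauses (2b),(2c) hold on a set of $\al$'s large enough for the Laver function to land in it; (2c) is automatic from properness of $J(\al)$ by Lemma~\ref{lemma:genericforincreasingsequence}-style reasoning in the extension, and (2b) is exactly the hypothesis on $\dot\Q$ reflected down. Then $\al$ is genuinely in the domain of the working parts, so $\ps$ forces $J(\al)=\dot\Q$ to have a generic condition over the relevant ground model; unwinding, the generic $G$ induces a filter on $\dot\Q$ meeting each $\dot D_\al$, as the $\al$-th coordinate of conditions in $\ps$ below appropriate models runs through dense subsets of $J(\al)$.

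The one genuine subtlety --- and the step I expect to be the main obstacle --- is verifying that the iteration ``catches its tail,'' i.e., that for \emph{every} name $\dot\Q$ (in $V[G]$) for a proper Lindel{\"o}f-preserving poset of size $<\theta$, some $\al\in Z^*$ with $J(\al)=\dot\Q$ \emph{and} with (2b),(2c) satisfied actually appears in the domain of a condition in $G$. The properness and reflection parts are routine Laver-function fare, but the extra wrinkle here (absent in the plain $\pfa$ case of \cite{Neemanttm}) is that the property ``preserves that $X$ is Lindel{\"o}f'' must reflect: one needs that if $\ps$ forces $\dot\Q$ to preserve Lindel{\"o}fness of $X$, then $\ps\upharpoonright\al$ forces $J(\al)$ to do so. This follows because $\ps$ factors as $(\ps\upharpoonright\al)$-then-(tail), the tail is (forced to be) proper hence preserves $\om_1$ and, crucially, by the induction of the first paragraph the whole tail is absorbed into a further side-conditions iteration that preserves Lindel{\"o}fness --- so the statement ``$\dot\Q$ preserves Lindel{\"o}fness of $X$'' is absolute between $V^{\ps}$ and $V^{(\ps\upharpoonright\al)\ast\dot\Q}$-type intermediate models via the upward/downward closure properties of the side-conditions machinery. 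Once this reflection is in hand, the closure argument goes through exactly as in Neeman's proof of $\Con(\pfa)$, and we conclude $\ps_L(X)\Vdash\pfa_L(X)$.
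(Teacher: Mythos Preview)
Your inductive argument that $\ps\upharpoonright\be$ preserves Lindel{\"o}fness for all $\be\in Z^*\cup\lim(Z^*)$ is exactly the paper's proof, with the same case split and the same appeals to Propositions~\ref{prop:limitcaselindelof}, \ref{prop:denseembedding}, \ref{prop:weirdsuccessorcaseLindelof}, Lemma~\ref{lemma:SCAlindelof}, and the base case via \cite{GiltonHolshouser}. (One small point: in Subcase~3(a) the paper treats the case $J'(\al)=$ trivial separately, observing that $\dot{\cal{M}}_\al(J'(\al))$ is then strongly proper so \cite{GiltonHolshouser} applies; your appeal to Lemma~\ref{lemma:SCAlindelof} still covers this, since the trivial poset preserves everything.) The paper then simply says ``apply the usual arguments'' for the Laver bookkeeping.

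Where your elaboration goes astray is in the framing of that bookkeeping step, and your flagged ``main obstacle'' is a phantom. You describe the Laver function as producing $\al\in Z^*\cap\theta$ with $J(\al)$ a $\ps\upharpoonright\al$-name for $\Q$, and then worry about reflecting ``$\Q$ preserves Lindel{\"o}f'' from $V^\ps$ down to $V^{\ps\upharpoonright\al}$. That is not how the argument runs: the Laver property yields an embedding $j:V\to M$ with $\crit(j)=\theta$, $M^\lambda\subseteq M$ for large $\lambda$, and $j(J)(\theta)$ a $\ps$-name for $\Q$; one works in $M$ at stage $\theta$ of $j(\ps)$, where $j(\ps)\upharpoonright\theta=\ps$. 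Condition~(2b) there reads ``$M\models\ps\text{ forces }\Q\text{ preserves Lindel{\"o}f of }X$,'' and this follows directly from the $V$-hypothesis together with $M^\lambda\subseteq M$: any open cover of $X$ lying in $M[G][H]$ is an open cover in $V[G][H]$, the countable subcover found there is a countable sequence from $M[G][H]$, and $\omega$-closure of $M[G][H]$ in $V[G][H]$ (inherited from the closure of $M$) puts it back in $M[G][H]$. Condition~(2c) transfers the same way. Your proposed resolution via ``the tail absorbed into a further side-conditions iteration'' and absoluteness between $V^\ps$ and $V^{(\ps\upharpoonright\al)\ast\Q}$ is neither needed nor correct: there is no $\al<\theta$ in $V$ at which $J(\al)$ literally names the given $\Q$, and nothing in the side-conditions machinery yields the downward absoluteness you invoke.
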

\begin{proof}

    We first note, by Lemma \ref{lemma:cardinalstructure}, that $\om_1$ and all $\lam\geq\theta$ are preserved and that $\theta$ becomes $\aleph_2$ in the extension.

    The main thing that we need to show is that $X$ remains Lindel{\"o}f after forcing with $\ps:=\ps_L(X)$. We prove by induction on $\be\in Z^*\cup\lim(Z^*)$ that $\ps\upharpoonright\beta$ preserves that $X$ is Lindel{\"of}; note that we are including $\theta$ itself in $\lim(Z^*)$.

    If $\be=\min(Z^*)$, then $\ps\upharpoonright\be$ is strongly proper, and hence by the results from \cite{GiltonHolshouser}, it preserves that $X$ is Lindel{\"o}f.

    Next, suppose that $\be\in\lim(Z^*)$ and that the result holds for all $\al$ below $\be$. By Proposition \ref{prop:limitcaselindelof}, $\ps\upharpoonright\beta$ preserves that $X$ is Lindel{\"o}f.

    Now we consider the successor cases. Fix $\be\in Z^*\setminus\lim(Z^*)$, and let $\al:=\sup(Z^*\cap\beta)$. Suppose first that $\al\in Z^*$. Recalling Notation \ref{notation:weirdJ}, Proposition \ref{prop:denseembedding} implies that there is a dense embedding from $D_{\al,\be}$ to $(\ps\upharpoonright\al)\ast(\dot{\cal{M}}_\al(J'(\al)))$. By induction, $\ps\upharpoonright\alpha$ preserves that $X$ is Lindel{\"o}f. There are two options for the value of $J'(\al)$: on the one hand, if $J'(\al)$ outputs the trivial poset, then $\ps\upharpoonright\al$ forces that $\dot{\cal{M}}_\al(J'(\al))$ is strongly proper (since it's isomorphic to forcing with finite $\in$-chains of countable models), and so the results from \cite{GiltonHolshouser} imply that it preserves that $X$ is Lindel{\"o}f. On the other hand, if $J'(\al)=J(\al)$ (i.e., if $J(\al)$ is forced to be the appropriate kind of poset) then by Lemma \ref{lemma:SCAlindelof}, we conclude that $\ps\upharpoonright\al$ forces that $\dot{\cal{M}}_\al(J(\al))$ preserves that $X$ is Lindel{\"o}f. In either case, on account of the dense embedding, we conclude that $\ps\upharpoonright\beta$ preserves that $X$ is Lindel{\"o}f.

    Second, suppose that $\al\notin Z^*$. By Proposition \ref{prop:weirdsuccessorcaseLindelof} and by our induction hypothesis, we conclude that $\ps\upharpoonright\beta$ preserves that $X$ is Lindel{\"o}f.

    Since the case $\beta=\theta$ was already taken care of as part of the limit case, we conclude that $\ps$ preserves that $X$ is Lindel{\"o}f. Since $\ps$ also preserves $\omega_1$ and forces that $\theta$ becomes $\omega_2$, we may now apply the usual arguments to conclude that $\ps$ forces $\pfa_L(X)$.
\end{proof}

\section{The Consistency of $\pfa_{CT}(X)$}\label{Section:PFACT}

In this section, we use the same case division as in the beginning of Section \ref{Section:PFAL}. We also let $\ps$ abbreviate $\ps_{CT}(X)$ where $X$ is a countably tight space of size $<\theta$.

\subsection{The Remaining Cases:}

\begin{proposition}\label{prop:limitcaseCT}
     Let $\be\in\lim(Z^*)$, and suppose that for all $\al\in Z^*\cap\be$, $\ps\upharpoonright\alpha$ preserves that $X$ is countably tight. Then $\ps\upharpoonright\beta$ preserves that $X$ is countably tight.
\end{proposition}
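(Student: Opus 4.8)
The plan is to adapt the proof of Proposition~\ref{prop:limitcaselindelof} almost verbatim, replacing ``countable subcover of an open cover'' with ``countable subset of a set $\dot{E}$ whose closure contains a fixed point $\check{x}$'', exactly as Lemma~\ref{lemma:SCAtight} adapted Lemma~\ref{lemma:SCAlindelof}. So I would fix a condition $\la s_0,f_0\ra$, a point $x\in X$, and a $(\ps\upharpoonright\beta)$-name $\dot{E}$ with $\la s_0,f_0\ra\Vdash \check{x}\in\cl(\dot{E})$. As before, pick a countable $M^*\prec H(\ka)$ (for regular $\ka>\theta$) containing all the relevant parameters ($J$, $\ps$, $\cal{S}$, $\cal{T}$, $\la s_0,f_0\ra$, $(X,\tau)$, $\dot{E}$, $Z$, $\theta$, $\beta$, and $x$), set $M:=M^*\cap H(\theta)\in\cal{S}$, use Lemma~\ref{lemma:tInM} to close $s_0\cup\lb M\rb$ under intersections to a condition $s_1$, and use Lemma~\ref{lemma:addM} to get $f_1$ with $\la s_1,f_1\ra\le\la s_0,f_0\ra$. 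The claim is that $\la s_1,f_1\ra$ forces that
$$
M(\dot{E}):=\lb(\check{z},\la s,f\ra):z\in M\cap X\we\la s,f\ra\in M\cap(\ps\upharpoonright\beta)\we\la s,f\ra\Vdash_{\ps\upharpoonright\beta}\check{z}\in\dot{E}\rb
$$
is a countable subset of $\dot{E}$ whose closure contains $x$; countability and being a subset of $\dot E$ are immediate since $M(\dot E)\seq M$.

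To verify $x\in\cl(M(\dot{E}))$, I fix an arbitrary open $U\in\tau$ with $x\in U$ and an extension $\la s_2,f_2\ra\le\la s_1,f_1\ra$, and aim to find a further extension forcing $\check U\cap M(\dot E)\ne\es$. Following the Lindel\"of proof: since $\beta\in\lim(Z^*)$ and $s_2\cap M\in M\cap H(\beta)$, use elementarity of $M^*$ to get $W\in\cal{T}\cap M\cap H(\beta)$ with $s_2\cap M\in W$; by Lemma~\ref{lemma:addTopTransitive} get $s_3\le s_2$ with $W\in s_3$, the nodes above $W$ being exactly those of $s_2$ from $M$ on, and the small nodes of $s_3$ being small nodes of $s_2$ or of the form $N\cap W$ with $N\in s_2$; so $\la s_3,f_2\ra$ is a condition by Remark~\ref{remark:condition5initialsegment}. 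Then form the $(\ps\upharpoonright W)$-name $\dot{E}_{W,s_3\cap M}$ consisting of pairs $(\check z,\la\bar s,\bar f\ra)$ with $z\in X$, $\la\bar s,\bar f\ra\in\ps\upharpoonright W$, and such that there is $\la s,f\ra\in\ps\upharpoonright\beta$ with $s_3\cap M\seq s$, $\la s,f\ra\upharpoonright W=\la\bar s,\bar f\ra$, and $\la s,f\ra\Vdash\check z\in\dot E$. The two claims to prove, by the same density and elementarity arguments as in Proposition~\ref{prop:limitcaselindelof} (Claims 1 and 2 there) and in Lemma~\ref{lemma:SCAtight} (Claims 1 and 2 there): first, $\la s_3,f_2\ra\upharpoonright W$ forces in $\ps\upharpoonright W$ that $\check x\in\cl(\dot{E}_{W,s_3\cap M})$, via Lemma~\ref{lemma:transitivestronglygeneric} to amalgamate extensions of $\la s_3,f_2\ra\upharpoonright W$ back up to $\ps\upharpoonright\beta$; and second, using the induction hypothesis (that $\ps\upharpoonright W=\ps\upharpoonright\zeta$, $W=H(\zeta)$, preserves countable tightness), that $\la s_3,f_2\ra\upharpoonright W$ forces $\check x\in\cl(M\cap\dot{E}_{W,s_3\cap M})$ --- here one passes to a generic $G_W$, applies Lemma~\ref{lemma:localproper} to get $M[G_W]\cap V=M$, applies countable tightness inside $M[G_W]$ to the open set $U$ to extract a countable $F\seq E_{W,s_3\cap M}[G_W]$ with $x\in\cl(F)$ and $F\seq M[G_W]$, finds $z\in U\cap F$, and pulls back a witness $\la\bar s,\bar f\ra\in G_W\cap M$.

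The endgame is exactly Claim~3 of Proposition~\ref{prop:limitcaselindelof}, which is purely about the model sequence poset and is topology-free: using the second claim, find an extension $\la\bar s_4,\bar f_4\ra$ of $\la s_3,f_2\ra\upharpoonright W$ in $\ps\upharpoonright W$, a point $z\in U\cap M$, a condition $\la\bar s,\bar f\ra\in M\cap W$ extended by $\la\bar s_4,\bar f_4\ra$ with $\bar s\supseteq s_3\cap M$, and $\la s,f\ra\in M$ with $\la s,f\ra\upharpoonright W=\la\bar s,\bar f\ra$ and $\la s,f\ra\Vdash\check z\in\dot E$; then build $\la s_4,f_4\ra$ extending $\la\bar s_4,\bar f_4\ra$, $\la s_3,f_2\ra$, and $\la s,f\ra$ by the same interleaving of the tails of $\bar s_4$, $s\bsl W$, and $s_3\bsl M$, adding the intersection nodes $M_j\cap\Delta_\ga$ below each new transitive node $\Delta_\ga$ of $s$, and extending the working part at $W$ and at the $\Delta_\ga$ via Lemma~\ref{lemma:genericforincreasingsequence}, finally invoking Lemma~\ref{lemma:genericforInterval}. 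Since $\la s_4,f_4\ra$ extends $\la s,f\ra$ it forces $\check z\in\dot E$, and since $z\in U\cap M$ and $\la s,f\ra\in M\cap(\ps\upharpoonright\beta)$, it forces $\check z\in\check U\cap M(\dot E)$. I do not expect any genuinely new obstacle: the combinatorial heart (Claim~3 / the amalgamation) is literally identical to the Lindel\"of case, and the only substantive change is swapping the ``Lindel\"of $\Rightarrow$ countable subcover'' step for the ``countably tight $\Rightarrow$ countable subset with $x$ in its closure'' step, relativized to an arbitrary basic open neighborhood $U$ of $x$, exactly as done in Lemma~\ref{lemma:SCAtight}; the mild bookkeeping point to watch is that countable tightness must be checked against all basic open $U\ni x$, so $U$ is fixed at the outset of the ``putting it together'' step rather than at the very end.
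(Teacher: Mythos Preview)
Your proposal is correct and follows the paper's own proof essentially verbatim: the same choice of $M$, the same name $M(\dot E)$, the same projected name $\dot E_{W,s_3\cap M}$, the same two claims, and the same appeal to the amalgamation argument (Claim~3) of Proposition~\ref{prop:limitcaselindelof} for the endgame. The only discrepancy is cosmetic: you set $M:=M^*\cap H(\theta)$ (copying Proposition~\ref{prop:limitcaselindelof} literally), whereas the paper's proof of this proposition writes $M:=M^*\cap H(\beta)$; the paper is itself inconsistent between the two propositions on this point, and it does not affect the structure of the argument.
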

\begin{proof}
    Fix a point $x\in X$ and a $(\ps\upharpoonright\be)$-name $\dot{E}$ for a subset of $X$ so that
    $$
    \Vdash_{\ps\upharpoonright\be}\check{x}\in\operatorname{cl}(\dot{E}).
    $$
    Let $\la s_0,f_0\ra$ be an arbitrary condition in $\ps\upharpoonright\be$.  We will find an extension $\la s_1,f_1\ra$ of $\la s_0,f_0\ra$ and a name $\dot{E}_0$ for a countable subset of $\dot{E}$ so that
    $$
    \la s_1,f_1\ra\Vdash_{\ps\upharpoonright\beta}\check{x}\in\operatorname{cl}(\dot{E}_0).
    $$

    Let $\ka>\theta$ be regular and $M^*\prec H(\kappa)$ countable so that $M^*$ contains the following parameters: $J$, $\ps$, $\cal{S}$, $\cal{T}$, $\la s_0,f_0\ra$, $(X,\tau)$, $\theta$, $\beta$, $\dot{E}$, and $x$. Define $M:=M^*\cap H(\beta)$ so that $M\in\mathcal{S}$. As in the proof of Proposition \ref{prop:limitcaselindelof}, find an extension $\la s_1,f_1\ra$ of $\la s_0,f_0\ra$ with $M\in s_0$. Now define
    $$
    M(\dot{E}):=\lb (\check{z},\la t,g\ra)\in M:z\in X\we \la t,g\ra\Vdash_{\ps\upharpoonright\beta}\check{z}\in\dot{E}\rb.
    $$
    We claim that 
    $$
    \la s_1,f_1\ra\Vdash\check{x}\in\operatorname{cl}(M(\dot{E})).
    $$

    Recall that it suffices to work with basic open sets to verify that a point is in the closure of a set, and also recall that $\tau$ is a basis for the topology on $X$ in the extension. In light of this, fix an arbitrary extension $\la s_2,f_2\ra$ of $\la s_1,f_1\ra$ and an open set $U\in\tau$ with $x\in U$, and we will find a further extension of $\la s_2,f_2\ra$ which forces that $\check{U}\cap\dot{E}\neq\es$.

    As in the proof of Proposition \ref{prop:limitcaselindelof}, we may find a transitive node $W\in M\cap H(\beta)$ so that $s_2\cap M\seq W$. Then we may also add $W$ to $s_2$ to form a condition $s_3$ in the model sequence poset so that $W\in s_3$ and so that the nodes of $s_3$ above $W$ are precisely the nodes of $s_2$ from $M$ onward. We again have that the small nodes of $s_3$ are either small nodes of $s_2$ or are of the form $N\cap W$, where $N\in s_2$ is small. This implies that $\la s_3,f_2\ra$ is a condition.

    Now let $\dot{E}_{W, s_3\cap M}$ be the $(\ps\upharpoonright W)$-name consisting of all pairs $(\check{z},\la\bar{t},\bar{g}\ra)$ so that
    \begin{enumerate}
        \item $z\in X$ and $\la \bar{t},\bar{g}\ra\in\ps\upharpoonright W$;
        \item there is a condition $\la t,g\ra\in\ps\upharpoonright\beta$ so that
        \begin{enumerate}
            \item $s_3\cap M\seq t$,
            \item $\la t,g\ra\upharpoonright W=\la\bar{t},\bar{g}\ra$, and 
            \item $\la t,g\ra\Vdash_{\ps\upharpoonright\beta}\check{z}\in\dot{E}$.
        \end{enumerate}
    \end{enumerate}
    Note that this name is in $M$, being definable from parameters in $M$.\\

    \textbf{Claim 1:} $\la s_3,f_2\ra\upharpoonright W$ forces that $\check{x}\in\operatorname{cl}(\dot{E}_{W,s_3\cap M})$.\\

    \emph{Proof of Claim 1:} Since it suffices to work with basic open sets, and hence with members of $\tau$, fix $B\in\tau$ with $x\in B$, and fix an extension $\la\bar{t},\bar{g}\ra$ of $\la s_3,f_2\ra\upharpoonright W$. By Lemma \ref{lemma:transitivestronglygeneric}, we can find an extension $\la t',g'\ra$ of both $\la s_3,f_2\ra$ and $\la\bar{t},\bar{g}\ra$ and, by further extension if necessary, we may find a specific $z\in B$ with $\la t',g'\ra\Vdash\check{z}\in\dot{E}$. Then $\la t',g'\ra\upharpoonright W$ extends the starting condition $\la\bar{t},\bar{g}\ra$, and moreover, $(\check{z},\la t',g'\ra\upharpoonright W)$ is a member of $\dot{E}_{W,s_3\cap M}$, as witnessed by $\la t',g'\ra$. $\hfill\blacksquare\text{(Claim 1)}$\\

    \textbf{Claim 2:} $\la s_3,f_2\ra\upharpoonright W$ forces that $\check{x}\in\operatorname{cl}(M\cap\dot{E}_{W,s_3\cap M})$.\\

    \emph{Proof of Claim 2:} Again fix $B\in\tau$ with $x\in B$ and an extension $\la\bar{t},\bar{g}\ra$ of $\la s_3,f_2\ra\upharpoonright W$ in $\ps\upharpoonright W$. Fix a $V$-generic filter $G_W$ over $\ps\upharpoonright W$ containing $\la\bar{t},\bar{g}\ra$.

    Define $E_W:=\dot{E}_{W,s_3\cap M}[G_W]$ so that $x\in\operatorname{cl}(E_W)$ by the previous claim. Apply the elementarity of $M[G_W]$ to find a countable subset $E'_0$ of $E_W$ which is a member of $M[G_W]$ and which satisfies that $x\in\operatorname{cl}(E'_0)$. Fix a point $z\in E'_0\cap B$, noting that $z\in M$ (not just $M[G_W]$) by properness. Again by properness and elementarity, find a condition $\la\bar{s},\bar{f}\ra\in G_W\cap M$ so that $(\check{z},\la\bar{s},\bar{f}\ra)$ is a member of the name $\dot{E}_{W,s_3\cap M}$. Then $(\check{z},\la\bar{s},\bar{f}\ra)$ is a member of the name $M\cap\dot{E}_{W,s_3\cap M}$. Since $\la\bar{s},\bar{f}\ra\in G_W$, it is compatible with $\la\bar{t},\bar{g}\ra$, which suffices to finish the proof. $\hfill\blacksquare\text{(Claim 2)}$\\

    Recall that in order to argue that $\la s_1,f_1\ra\Vdash \check{x}\in\operatorname{cl}(M(\dot{E}))$, we are working to find an extension of $\la s_2,f_2\ra$ which forces that $\check{U}\cap\dot{E}\neq\es$. By Claim 2, we can find
    \begin{itemize}
        \item $\la\bar{s}_4,\bar{f}_4\ra$ extending $\la s_3,f_2\ra\upharpoonright W$ in $\ps\upharpoonright W$,
        \item a point $z\in M$ with $z\in U$,
        \item $\la\bar{s},\bar{f}\ra\in M\cap W$ with $\bar{s}\supseteq s_3\cap M$ and $\la\bar{s}_4,\bar{f}_4\ra$ extending $\la\bar{s},\bar{f}\ra$, and
        \item $\la s,f\ra\in M$ with $\la s,f\ra\upharpoonright W=\la\bar{s},\bar{f}\ra$ and which forces that $\check{z}\in\dot{E}$.
    \end{itemize}

    As in the proof of Proposition \ref{prop:limitcaselindelof}, let $\la s_4,f_4\ra$ be a condition which extends each of the conditions $\la\bar{s}_4,\bar{f}_4\ra$, $\la s_3,f_2\ra$, and $\la s,f\ra$. Then $\la s_4,f_4\ra$ forces that $z\in M(\dot{E})$ since it extends $\la s,f\ra$ and since $(\check{z},\la s,f\ra)\in M(\dot{E})$. Finally, $\la s_4,f_4\ra$ extends $\la s_3,f_2\ra$, and hence $\la s_2,f_2\ra$. Since $z\in U$, this completes the proof.
\end{proof}

\begin{proposition}\label{prop:weirdsuccessorcaseCT}
    Suppose that $\be\in Z^*\bsl\lim(Z^*)$ and that $\al:=\sup(Z^*\cap\be)\notin Z^*$. Suppose further that for all $\bar{\al}<\al$ with $\bar{\al}\in Z^*$, $\ps\upharpoonright\bar{\alpha}$ preserves that $X$ is countably tight. Then $\ps\upharpoonright\beta$ preserves that $X$ is countably tight. 
\end{proposition}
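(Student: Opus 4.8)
The plan is to imitate the proof of Proposition~\ref{prop:weirdsuccessorcaseLindelof}, making the same replacement that turns Proposition~\ref{prop:limitcaselindelof} into Proposition~\ref{prop:limitcaseCT}: wherever the Lindel{\"o}f argument asserts that a name for a family of basic open sets is forced to be an open cover of $X$, the countably tight argument instead asserts that $\check{x}$ is forced to lie in the closure of the corresponding name for a subset of $X$, and the single appeal to preservation of the Lindel{\"o}f property is replaced by an appeal to preservation of countable tightness. Concretely, I would fix a point $x\in X$, a $(\ps\upharpoonright\be)$-name $\dot{E}$ for a subset of $X$ with $\Vdash_{\ps\upharpoonright\be}\check{x}\in\operatorname{cl}(\dot{E})$, and a condition $\la s_0,f_0\ra$; then pick a regular $\ka>\theta$ and a countable $M^*\prec H(\ka)$ containing $\ps$, $J$, $\cal{S}$, $\cal{T}$, $(X,\tau)$, $\la s_0,f_0\ra$, $\be$, $\dot{E}$, $x$, $Z^*$, and $\theta$, set $M:=M^*\cap H(\be)\in\cal{S}$, and use Lemma~\ref{lemma:addM} to obtain an extension $\la s_1,f_1\ra$ of $\la s_0,f_0\ra$ with $M\in s_1$. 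The candidate countable subset is
$$
M(\dot{E}):=\lb(\check{z},\la t,g\ra)\in M:z\in X\we\la t,g\ra\in M\cap(\ps\upharpoonright\be)\we\la t,g\ra\Vdash\check{z}\in\dot{E}\rb,
$$
which is outright forced to be a countable subset of $\dot{E}$, and the goal is $\la s_1,f_1\ra\Vdash\check{x}\in\operatorname{cl}(M(\dot{E}))$.

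To verify this, fix an arbitrary extension $\la s_2,f_2\ra$ of $\la s_1,f_1\ra$ and a basic open set $U\in\tau$ with $x\in U$; the task becomes finding a further extension forcing $\check{U}\cap M(\dot{E})\neq\es$. Exactly as in Proposition~\ref{prop:weirdsuccessorcaseLindelof}, I would first pass to an extension $\la s_3,f_3\ra$ whose top node $Q$ below $H(\al)$ is transitive, lies outside $\dom(f_3)$, and is a member of $M$; this uses that $\min(s_2\setminus H(\al))$ is necessarily a small node (since $\al\notin Z^*$ and $\be=\min(Z^*\setminus\al)$, a transitive node there would be $H(\bar\de)$ with $\bar\de\geq\be$, contradicting $s_2\seq H(\be)$) together with Lemma~\ref{lemma:addTopTransitive}. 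Since $Q$ is then the top transitive node of $s_3$, we have $Q=H(\bar\de)$ for some $\bar\de\in Z^*\cap\al$, so the inductive hypothesis applies: $\ps\upharpoonright Q$ preserves that $X$ is countably tight. Next I would form the $(\ps\upharpoonright Q)$-name $\dot{E}_{Q,s_3\cap M}$, a member of $M$, consisting of the pairs $(\check{z},\la\bar{t},\bar{g}\ra)$ with $z\in X$ and $\la\bar{t},\bar{g}\ra\in\ps\upharpoonright Q$ for which there is $\la t,g\ra\in\ps\upharpoonright\be$ with $\la t,g\ra\upharpoonright Q=\la\bar{t},\bar{g}\ra$, with $t$ extending $s_3\cap M$ in the model sequence poset, and with $\la t,g\ra\Vdash\check{z}\in\dot{E}$.

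The rest runs in parallel with the Lindel{\"o}f proof. First, $\la s_3\cap Q,f_3\ra$ forces $\check{x}\in\operatorname{cl}(\dot{E}_{Q,s_3\cap M})$: given a basic $B\ni x$ and an extension of $\la s_3\cap Q,f_3\ra$, amalgamate with $\la s_3,f_3\ra$ via Lemma~\ref{lemma:transitivestronglygeneric}, extend the amalgam to decide some $\check{z}\in\dot{E}$ with $z\in B$, and observe that the restriction to $Q$ witnesses $\check{z}\in\dot{E}_{Q,s_3\cap M}$. Second, $\la s_3\cap Q,f_3\ra$ forces $\check{x}\in\operatorname{cl}(M\cap\dot{E}_{Q,s_3\cap M})$: passing to a generic $G_Q$ and using the first statement, the preservation of countable tightness by $\ps\upharpoonright Q$ produces inside $M[G_Q]$ a countable subset $F$ of $(\dot{E}_{Q,s_3\cap M})[G_Q]$ with $x\in\operatorname{cl}(F)$; a point of $F$ lying in $B$ and a witnessing condition can be pulled back into $M$ by the properness of $\la s_3\cap Q,f_3\ra$ for $M$ from Lemma~\ref{lemma:localproper}. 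From this one extracts $(\check{z},\la\bar{t},\bar{g}\ra)\in M\cap\dot{E}_{Q,s_3\cap M}$ with $z\in U$ and with $\la s_3\cap Q,f_3\ra$ compatible with $\la\bar{t},\bar{g}\ra$, together with witnesses $t,g\in M$ of membership in $\dot{E}_{Q,s_3\cap M}$. The proof then concludes by showing $\la s_3,f_3\ra$ and $\la t,g\ra$ are compatible, via the same amalgamation construction as in Proposition~\ref{prop:weirdsuccessorcaseLindelof}: let $\la\bar{s}^*,\bar{f}^*\ra$ witness compatibility in $\ps\upharpoonright Q$, form $s'_4:=\bar{s}^*\cup(t\setminus Q)\cup(s_3\setminus M)$, close it under intersections by inserting $M_i\cap Q_j$ directly below each transitive node $Q_j$ of $t$ strictly above $Q$, and define $f_4$ to be $\bar{f}^*$ below $Q$ and, at and above $Q$, the extensions of $g$ supplied by Lemma~\ref{lemma:genericforincreasingsequence}; Lemma~\ref{lemma:genericforInterval} then certifies that $\la s_4,f_4\ra$ is a condition. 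I expect no genuinely new obstacle: the only change from the Lindel{\"o}f version is the systematic replacement of ``open cover / countable subcover'' by ``$x$ in the closure / countable subset whose closure contains $x$'', while the longest and most delicate part --- the amalgamation bookkeeping of the final step --- is word-for-word the same.
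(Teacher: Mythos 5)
Your proposal is correct and follows essentially the same route as the paper's proof: the same model $M$, the same name $M(\dot{E})$, the same passage to a condition $\la s_3,f_3\ra$ with top transitive node $Q\in M$ below $H(\al)$, the same projected name $\dot{E}_{Q,s_3\cap M}$, the same two claims (using strong genericity at $Q$ and then the inductive preservation of countable tightness plus properness for $M$), and the same final amalgamation borrowed from Proposition \ref{prop:weirdsuccessorcaseLindelof}. Your explicit observation that $Q=H(\bar\de)$ with $\bar\de\in Z^*\cap\al$, so the induction hypothesis indeed applies to $\ps\upharpoonright Q$, is a point the paper leaves implicit, but otherwise the arguments coincide.
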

\begin{proof}
    Fix a condition $\la s_0,f_0\ra\in\ps\upharpoonright\beta$, a $(\ps\upharpoonright\beta)$-name $\dot{E}$ for a subset of $X$, and a specific point $x$ which is forced to be in the closure of $\dot{E}$. We will find an extension $\la s_1,f_1\ra$ of $\la s_0,f_0\ra$ and a name $\dot{E}_0$ for a countable subset of $\dot{E}$ so that
    $$
    \la s_1,f_1\ra\Vdash_{\ps\upharpoonright\beta}\check{x}\in\operatorname{cl}(\dot{E}_0).
    $$

    Let $M^*$ and $M$ be as in Propostion \ref{prop:weirdsuccessorcaseLindelof}, but containing the parameters relevant for this proposition. As usual, let $\la s_1,f_1\ra$ be an extension of $\la s_0,f_0\ra$ with $M\in s_0$, and define $M(\dot{E})$ to be the following name:
    $$
    M(\dot{E}):=\lb (\check{z},\la t,g\ra)\in M:z\in X\we\la t,g\ra\Vdash\check{z}\in\dot{E}\rb.
    $$
    We claim that
    $$
    \la s_1,f_1\ra\Vdash \check{x}\in\operatorname{cl}(M(\dot{E})).
    $$

    Thus we fix an extension $\la s_2,f_2\ra$ of $\la s_1,f_1\ra$ and an open $U\in\tau$ with $x\in U$, and we will find a further extension which forces that $\check{U}\cap M(\dot{E})\neq\es$.

    As in the proof of Proposition \ref{prop:weirdsuccessorcaseLindelof}, we extend $s_2$ in the model sequence poset to a condition $s_3$ so that the following hold:
    \begin{itemize}
        \item the top node of $s_3\cap H(\al)$, say $Q$, is a transitive node and is not in the domain of $f_2$,
        \item the nodes of $s_3\bsl s_2$ are either transitive or of the form $N\cap W$ for some $N\in s_2$ and $W\in\cal{T}$, and
        \item the nodes of $s_3$ above $Q$ are exactly the nodes of $s_2$ above $Q$.
    \end{itemize}
    For uniformity of notation, we set $f_3:=f_2$ so that $\la s_3,f_3\ra$ is a condition extending $\la s_2,f_2\ra$.

    Define $\dot{E}_{Q,s_3\cap M}$ to be the $(\ps\upharpoonright Q)$-name consisting of all pairs $(\check{z},\la\bar{t},\bar{g}\ra)$ so that
    \begin{enumerate}
        \item $z\in X$ is a point in the space,
        \item there exists $\la t,g\ra$ in $\ps\upharpoonright\beta$ so that
        \begin{enumerate}
            \item $\la t,g\ra\upharpoonright Q=\la\bar{t},\bar{g}\ra$,
            \item $t\supseteq s_3\cap M$, and
            \item $\la t,g\ra\Vdash\check{z}\in\dot{E}$.
        \end{enumerate}
    \end{enumerate}

\vspace{.1in}

    \textbf{Claim 1:} $\la s_3\cap Q,f_3\ra$ forces in $\ps\upharpoonright Q$ that $\check{x}\in\operatorname{cl}(\dot{E}_{Q,s_3\cap M})$.\\

    \emph{Proof of Claim 1:} Fix an extension $\la\bar{t},\bar{g}\ra$ of $\la s_3\cap Q,f_3\ra$ in $\ps\upharpoonright Q$ as well as an open $B\in\tau$ with $x\in B$. Since $\la\bar{t},\bar{g}\ra\leq\la s_3\cap Q,f_3\ra=\la s_3,f_3\ra\upharpoonright Q$ in $Q$, the conditions $\la\bar{t},\bar{g}\ra$ and $\la s_3,f_3\ra$ are compatible. Thus let $\la t',g'\ra$ extend both; we may assume by further extension if necessary that for a specific $z\in B$, $\la t',g'\ra\Vdash\check{z}\in\dot{E}$. Then $\la t',g'\ra\upharpoonright Q$ extends $\la s_3\cap Q,f_3\ra$, and $(\check{z},\la t',g'\ra\upharpoonright Q)$ is a member of the name $\dot{E}_{Q,s_3\cap M}$. \hfill$\blacksquare$(Claim 1)\\

    \textbf{Claim 2:} $\la s_3\cap Q,f_3\ra$ forces in $\ps\upharpoonright Q$ that $\check{x}\in\operatorname{cl}(M\cap\dot{E}_{Q,s_3\cap M})$.\\

    \emph{Proof of Claim 2:} Fix an extension $\la \bar{t},\bar{g}\ra$ of $\la s_3\cap Q,f_3\ra$ in $\ps\upharpoonright Q$ as well as an open $B\in\tau$ with $x\in B$. Let $G_Q$ be a $V$-generic filter over $\ps\upharpoonright Q$ containing $\la\bar{t},\bar{q}\ra$, and define $E_Q:=\dot{E}_{Q,s_3\cap M}[G_Q]$. Note that $E_Q\in M[G_Q]$. Apply the elementarity of $M[G_Q]$ to fix a countable subset $E'_0$ of $E_Q$ with $E'_0\in M[G_Q]$ and with $x\in\operatorname{cl}(E'_0)$. Fix a point $z\in B\cap E'_0$. Since $z\in M[G_Q]\cap E_Q$, we may find $\la\bar{s},\bar{f}\ra$ in $M[G_Q]$ so that $(\check{z},\la\bar{s},\bar{f}\ra)$ is in $\dot{E}_{Q,s_3\cap M}$. By properness, $(\check{z},\la\bar{s},\bar{f}\ra)$ is in fact in $M$. Since $\la\bar{s},\bar{f}\ra\in G_Q$, it is compatible with $\la\bar{t},\bar{g}\ra$, and this suffices to finish the proof of the claim. \hfill$\blacksquare$(Claim 2)\\

Now by applying Claim 2, we may find a pair $(\check{z},\la\bar{t},\bar{g}\ra)$ in $M\cap\dot{E}_{Q,s_3\cap M}$ with $\la s_3\cap Q,f_3\ra$ compatible with $\la\bar{t},\bar{g}\ra$. Let $\la t,g\ra$ witness that $(\check{z},\la\bar{t},\bar{g}\ra)$ is a member of $M\cap\dot{E}_{Q,s_3\cap M}$. Then, as in the proof of Proposition \ref{prop:weirdsuccessorcaseLindelof}, $\la s_3,f_3\ra$ and $\la t,g\ra$ are compatible conditions. This suffices to finish the proof of the proposition.
    
\end{proof}

\subsection{Putting it All Together}

We can now put together the previous pieces to complete the proof of the following theorem:

\begin{theorem}\label{theorem:CT}
    Under Assumption \ref{GroundModelassumption}, suppose that $X$ is a countably tight space. Then the poset $\ps_{CT}(X)$, where ``$CT$" denotes ``countably tight," forces that $\pfa_{CT}(X)$ holds.
\end{theorem}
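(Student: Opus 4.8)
The plan is to mirror the structure of the proof of Theorem \ref{theorem:Lindelof}, substituting the countably-tight analogues of each ingredient for the Lindel\"of ones. The overall skeleton is: (1) invoke Lemma \ref{lemma:cardinalstructure} to get that $\om_1$ and all $\lam\geq\theta$ are preserved and that $\theta$ becomes $\aleph_2$ in the extension by $\ps:=\ps_{CT}(X)$; (2) prove by induction on $\be\in Z^*\cup\lim(Z^*)$ (including $\theta$ itself as a member of $\lim(Z^*)$) that $\ps\upharpoonright\beta$ preserves that $X$ is countably tight; and (3) conclude by the usual forcing-axiom bookkeeping that $\ps$ forces $\pfa_{CT}(X)$.

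For the induction, I would break into the same four cases as in Section \ref{Section:PFAL}. The base case $\be=\min(Z^*)$ is handled because $\ps\upharpoonright\be$ is strongly proper, so by \cite{GiltonHolshouser} it preserves countable tightness of $X$. The limit case $\be\in\lim(Z^*)$ is Proposition \ref{prop:limitcaseCT}. For the successor case $\be\in Z^*\setminus\lim(Z^*)$ with $\al:=\sup(Z^*\cap\be)$: if $\al\in Z^*$, then Proposition \ref{prop:denseembedding} gives a dense embedding from $D_{\al,\be}$ into $(\ps\upharpoonright\al)\ast(\dot{\cal{M}}_\al(J'(\al)))$; by the induction hypothesis $\ps\upharpoonright\al$ preserves countable tightness of $X$, and then either $J'(\al)$ is forced to be trivial — in which case $\dot{\cal{M}}_\al(J'(\al))$ is (isomorphic to) forcing with finite $\in$-chains of countable models, hence strongly proper, so \cite{GiltonHolshouser} applies — or $J'(\al)=J(\al)$, in which case Lemma \ref{lemma:SCAtight} says $\ps\upharpoonright\al$ forces that $\dot{\cal{M}}_\al(J(\al))$ preserves countable tightness of $X$; in either subcase the dense embedding transfers the property to $\ps\upharpoonright\be$. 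If $\al\notin Z^*$, then Proposition \ref{prop:weirdsuccessorcaseCT} together with the induction hypothesis gives the claim. Since $\be=\theta$ falls under the limit case, we obtain that $\ps=\ps\upharpoonright\theta$ preserves countable tightness of $X$.

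Finally, since $\ps$ preserves $\om_1$, forces $\theta=\aleph_2$, and preserves that $X$ is countably tight, the standard argument (using that $J$ is a Laver function, so that along the iteration we catch, with appropriate generic filters, every relevant proper poset of the right kind together with its $\om_1$-sequence of dense sets, and condition (2) of Definition \ref{def:PFAiteration} ensures exactly the posets that preserve countable tightness of $X$ are enumerated) shows that $\ps$ forces $\pfa_{CT}(X)$. Concretely, given in the extension a proper poset $\Q$ preserving that $X$ is countably tight and an $\om_1$-sequence $\la D_\al:\al<\om_1\ra$ of dense subsets, one reflects these to some $H(\al)$ with $\al\in Z^*$ where $J(\al)$ names $\Q$ and conditions (2b), (2c) hold, so $J'(\al)=J(\al)=\Q$ was forced with at stage $\al$, yielding the desired filter.

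I do not expect any genuinely new obstacle here: the entire theorem is a transcription of Theorem \ref{theorem:Lindelof} with ``Lindel\"of'' replaced by ``countably tight'' throughout, and every lemma invoked for the Lindel\"of argument has already been proved in its countably-tight form (Lemma \ref{lemma:SCAtight}, Propositions \ref{prop:limitcaseCT} and \ref{prop:weirdsuccessorcaseCT}). The only point requiring any care is the very last step — verifying that condition (2) of Definition \ref{def:PFAiteration} (specifically (2b), demanding that $J(\al)$ preserve $\Phi=$``countably tight'' of $X$) together with the Laver function genericity does in fact capture every $\Q$ that $\pfa_{CT}(X)$ quantifies over — but this is identical to the bookkeeping in Neeman's original consistency proof of $\pfa$, simply carrying the extra topological hypothesis along, so it goes through verbatim. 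Hence the proof is complete by appealing to these already-established pieces.
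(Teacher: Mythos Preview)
Your proposal is correct and follows essentially the same approach as the paper's own proof, which explicitly says it is ``almost identical to the proof of Theorem \ref{theorem:Lindelof}'' and then runs through the same case split (base case via strong properness and \cite{GiltonHolshouser}, limit case via Proposition \ref{prop:limitcaseCT}, successor with $\al\in Z^*$ via Proposition \ref{prop:denseembedding} and Lemma \ref{lemma:SCAtight}, successor with $\al\notin Z^*$ via Proposition \ref{prop:weirdsuccessorcaseCT}). Your write-up is in fact more detailed than the paper's, particularly in spelling out the Laver-function bookkeeping for the final step, which the paper just calls ``the usual arguments.''
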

\begin{proof}
    This proof is almost identical to the proof of Theorem \ref{theorem:Lindelof}, using the analogous preservation results for ``countably tight" in place of ``Lindel{\"o}f".

    In a bit more detail, we prove by induction on $\be\in Z^*\cup\lim(Z^*)$ that $\ps\upharpoonright\beta$ preserves that $X$ is countably tight.

    If $\be=\min(Z^*)$, then $\ps\upharpoonright\be$ is strongly proper, and hence by the results from \cite{GiltonHolshouser}, it preserves that $X$ is countably tight.

    Next, suppose that $\be\in\lim(Z^*)$ and that the result holds for all $\al$ below $\be$. By Proposition \ref{prop:limitcaseCT}, $\ps\upharpoonright\beta$ preserves that $X$ is countably tight.

    Next, consider the successor case where $\be\in Z^*\bsl\lim(Z^*)$. Define $\al:=\sup(Z^*\cap\be)$. If $\al\in Z^*$, then we apply Proposition \ref{prop:denseembedding} and Lemma \ref{lemma:SCAtight} to conclude that $\ps\upharpoonright\beta$ preserves that $X$ is countably tight. On the other hand, if $\al\notin Z^*$, then we apply Proposition \ref{prop:weirdsuccessorcaseLindelof} to conclude that $\ps\upharpoonright\beta$ preserves that $X$ is countably tight.

    Since the case $\beta=\theta$ was taken care of during our discussion of the limit case, we know that $\ps$ preserves that $X$ is countably tight. Just as in the proof of Theorem \ref{theorem:Lindelof}, we have that $\ps$ preserves $\om_1$, preserves all cardinals at least $\theta$, and forces that $\theta$ becomes $\aleph_2$. And finally, the usual arguments show that $\ps$ forces $\pfa_{CT}(X)$.
\end{proof}

\section{Comments on Applications and Questions}\label{Section:questions}

In follow-up work, we will discuss applications of these forcing axioms in more detail. For now, we make the following remark: any consequence $\Psi$ of $\pfa$ which can be forced by a strongly proper forcing is a consequence of the two $\pfa_\Phi(X)$ axioms discussed here. This is on account of the results from \cite{GiltonHolshouser} that strongly proper posets preserve the Lindel{\"o}f property and preserve countably tight spaces.

The following ``spectrum" question seems to be the most natural place to start:

\begin{question}
    Let $\Phi$ be either ``Lindel{\"o}f" or ``countably tight". What are the consequences of $\pfa_\Phi(X)$ as we vary the space $X$?
\end{question}

The following contains some specific instances of the previous question:

\begin{question}
    With $\Phi$ as in the previous question, for which $X$ does $\pfa_\Phi(X)$ imply the open graph axiom? The $P$-ideal dichotomy? The failure of square principles?
\end{question}

Another natural line of questions is the following:

\begin{question}
    For which other topological properties $\Phi$ and spaces $X$ satisfying $\Phi$ can we prove the consistency of $\pfa_\Phi(X)$? For instance, what if $X$ is a Rothberger space? A Menger space? Or if $X$ satisfies strategic versions of these covering principles?
\end{question}

\section{Acknowledgements}
I would like to thank Paul Gartside, Jared Holshouser, Pedro Marun, and Itay Neeman for a number of helpful conversations on this topic.

\end{document}